%
%
%
%
\documentclass{amsart}
\usepackage[foot]{amsaddr}
\usepackage{graphicx}

\usepackage{amsfonts}
\usepackage{amsmath}
\usepackage{amssymb}
\usepackage{amsthm} 
\usepackage{array}
\usepackage{float}
\usepackage[toc,page,titletoc]{appendix}
\usepackage{longtable}
\usepackage{graphicx}
\usepackage[font=singlespacing]{caption}
\usepackage[normalem]{ulem}
\usepackage{nicefrac}
\usepackage{units}
\usepackage{rotating}
\usepackage{afterpage}
\usepackage{tikz}
\usepackage{tikz-cd}
\usetikzlibrary{arrows,positioning}

\usepackage{mathrsfs}
\usepackage{amscd}

\usepackage{cleveref}
\usepackage{comment}
\usepackage{upgreek}
\usepackage{ragged2e}

\usepackage[mathscr]{euscript}
\usepackage{scalerel,stackengine}

\usepackage{enumitem}

\newtheorem{theorem}{Theorem}[section]
\newtheorem{lemma}[theorem]{Lemma}

\theoremstyle{definition}

\theoremstyle{remark}

\numberwithin{equation}{section}



\newtheorem{corollary}[theorem]{Corollary}

\theoremstyle{definition}

\newcommand{\overbar}[1]{\mkern 1.5mu\overline{\mkern-1.5mu#1\mkern-1.5mu}\mkern 1.5mu}
\def\til#1{\widetilde{#1}} 

\newcommand{\mf}[1]{\mathfrak{#1}}
\newcommand{\ms}[1]{\mathscr{#1}}

\stackMath
\newcommand\reallywidehat[1]{%
\savestack{\tmpbox}{\stretchto{%
  \scaleto{%
    \scalerel*[\widthof{\ensuremath{#1}}]{\kern-.6pt\bigwedge\kern-.6pt}%
    {\rule[-\textheight/2]{1ex}{\textheight}}
  }{\textheight}%
}{0.5ex}}%
\stackon[1pt]{#1}{\tmpbox}%
}

\newcounter{parno}[paragraph]

\newcounter{subparno}[section]

\newcommand\NoIndent[1]{%
  \begingroup
  \par
  \parshape0
  #1\par
  \endgroup
}

\begin{document}

\title{Morita Equivalence of $W^{*}$-Correspondences and Their Hardy Algebras}

\author{Rene Ardila}
\address{Department of Mathematics, Grand Valley State University, Allendale, Michigan 49401}
\email{ardilar@gvsu.edu}


\subjclass[2010]{Primary 16D90, 30H10, 47L80, 47L45}

\date{May 20, 2018}


\keywords{Hardy algebras, Morita equivalence, $W^{*}$-Correspondence, graph correspondence}

\begin{abstract}
Muhly and Solel developed a notion of Morita equivalence for $C^{*}$- correspondences, which they used to show that if two $C^{*}$-correspondences $E$ and $F$ are Morita equivalent then their tensor algebras $\mathcal{T}_{+}(E)$ and $\mathcal{T}_{+}(F)$ are (strongly) Morita equivalent operator algebras. We give the weak$^{*}$ version of this result by considering (weak) Morita equivalence of $W^{*}$-correspondences and employing Blecher and Kashyap's notion of Morita equivalence for dual operator algebras. More precisely, we show that weak Morita equivalence of $W^{*}$-correspondences $E$ and $F$ implies weak Morita equivalence of their Hardy algebras $H^{\infty}(E)$ and $H^{\infty}(F)$. We give special attention to $W^{*}$-graph correspondences and show a number of results related to their Morita equivalence. 
\end{abstract}

\maketitle




\section{Introduction}
Given a von Neumann algebra $A$ and a $W^{*}$-correspondence $E$ over $A$, Muhly and Solel constructed an algebra $H^{\infty}(E)$ which they called the Hardy algebra of $E$ \cite{Muhly2004a}. This algebra is a noncommutative generalization of the classic Hardy algebra $H^{\infty}(\mathbb{T})$ of bounded analytic functions on the open unit disc. More precisely, when $E=A=\mathbb{C}$, $H^{\infty}(E)$ is the classical Hardy space $H^{\infty}(\mathbb{T})$. When $A=\mathbb{C}$ and $E=\mathbb{C}^{n}$, $H^{\infty}(E)$ is the free semigroup algebra $\mathfrak{L}_{n}$ studied by Popescu \cite{Popescu1991}, Davidson, Pitts \cite{DPit98a} and others. 
This Hardy algebra is a dual operator subalgebra of $\ms{L}(\ms{F}(E))$, the adjointable operators of the Fock space of $E$, generated by diagonal and creation operators. When $E$ is a correspondence derived from a directed graph $G$, $H^{\infty}(E)$ is a dual operator algebra version of what algebraists call the path algebra of $G$.

Kiiti Morita's 1958 groundbraking 
paper \cite{Morita1958} contains the main ideas of what later became known as Morita equivalence, an extremely important concept in the study of the algebraic structure of rings. Following the dissemination of Morita's ideas, mainly by H. Bass and P. Gabriel in the early 1960s, many other notions of Morita equivalence have been developed, including notions of Morita equivalence for  
selfadjoint algebras, operator algebras, groupoids, group *-algebras, finite groups, Poisson manifolds, non commutative smooth tori, tensor categories, semigroups and star products.
In \cite{Muhly2000a}, Muhly and Solel introduced a notion of (strong) Morita equivalence for $C^{*}$-correspondences, which they used to show that if two $C^{*}$-correspondences $E$ and $F$ are (strongly) Morita equivalent then their tensor algebras $\mathcal{T}_{+}(E)$ and $\mathcal{T}_{+}(F)$ are (strongly) Morita equivalent operator algebras. At that time however, there was no clear notion of Morita equivalence for dual operator algebras. Such notions were developed ten years later in the work of Blecher, Kashyap, Eleftherakis and Paulsen (\cite{Blecher2008a}, \cite{Eleftherakis2008}, \cite{Eleftherakis2008b}). Motivated by Muhly and Solel's work, we consider 
(weak) Morita equivalence of $W^{*}$-correspondences, and use Blecher and Kashiap's notion of Morita equivalence for dual operator algebras to show that if two $W^{*}$-correspondences $E$ and $F$ are (weakly) Morita equivalent then their Hardy algebras $H^{\infty}(E)$ and $H^{\infty}(F)$ are (weakly) Morita equivalent dual operator algebras.

In 
the last section, 
 we concentrate on Morita equivalence of $W^{*}$-graph correspondences, their Hardy algebras and their representations. We show that if $(E,A)$ is a $W^{*}$-graph correspondence then any two faithful normal representations $\sigma$ and $\tau$ of $A$ give rise to Morita equivalent dual corresoondences $(E^{\sigma}$, $\sigma(A)')$ and $(E^{\tau}$, $\tau(A)')$. Then we consider the induced representations $\sigma ^{\ms{F}(E)}$ and $\tau ^{\ms{F}(E)}$ of the Hardy algebra $H^{\infty}(E)$ and show that the commutants of $\sigma ^{\ms{F}(E)}(H^{\infty}(E))$ and $\tau ^{\ms{F}(E)}(H^{\infty}(E))$ are (weakly) Morita equivalent dual operator algebras. We also study  equivalence bimodules and the relation between graphs and the Morita equivalence of their $W^{*}$-correspondences. 
 
\section{Preliminaries} \label{preliminaries}
%
A right $C^{*}$-module $E$ over a $C^{*}$-algebra $A$ is said to be \emph{selfdual} if every continuous $A$-module map $f:E\to A$ is of the form $f(\cdot)=\langle y, \cdot\rangle$, for some $y\in E$.  We say that $E$ is a right \emph{$W^{*}$-module} if $E$ is a selfdual right $C^{*}$-module over a $W^{*}$-algebra.
We write $\ms{L}_{A}(E)$ (or simply $\ms{L}(E)$) for the space of \emph{adjointable} $A$-module maps on $E$. 
 An $A-B$ \emph{$W^{*}$-correspondence} is a right $W^{*}$-module $E$ over $B$ for which there exists a unital normal $*$-homomorphism $\varphi$: $A \to \ms{L}_{B}(E)$. We then say that $E$ is a $W^{*}$-correspondence from $A$ to $B$, and we denote it by $_{A}E_{B}$. If $A=B$ then we say that $E$ is a $W^{*}$-correspondence over $A$. In this case, we might also denote the correspondence by $(E,A)$. The center of a $W^{*}$-correspondence $(E,A)$ is the set $\mathfrak{Z}(E)=\{x\in E : a\cdot x=x\cdot a\text{ for all } a\in A\}$. We will sometimes abbreviate "weak$^{*}$" to "$w^{*}$".

The \emph{$W^{*}$-module tensor product} $\overbar{\otimes}_{A}$ (sometimes written as the \emph{composition tensor product} $X\overbar{\otimes}_{\sigma}Y$) is defined to be the selfdual completion (the weak$^{*}$-completion) of the $C^{*}$-module interior tensor product $X\otimes_{\sigma}Y$. When there is no risk of confusion, we will simply write $X\overbar{\otimes}Y$. 
The $W^{*}$-module tensor product is functorial and associative. If $E$ is a $W^{*}$-correspondence from $A$ to $B$ and $F$ is a $W^{*}$-correspondence from $B$ to $C$ then $E\overbar{\otimes}_{A}F$ is a $W^{*}$-correspondence from $A$ to $C$ with inner product given by $\left\langle x_{1} \otimes y_{1},x_{2} \otimes y_{2} \right\rangle _{E\otimes F} = \left\langle y_{1}, \varphi (\left\langle x_{1},x_{2}\right\rangle _{E})y_{2} \right\rangle_{F}$ and left/right actions given by $a\cdot(x\otimes y)\cdot c = (a\cdot x) \otimes (y \cdot c)= (\varphi_{E}(a)x) \otimes (y \cdot c)$. In particular, given a $W^{*}$-correspondence $E$ over $A$ and a Hilbert space $H$ equipped with a normal representation $\sigma$ of $A$, we can form the Hilbert space $E\overbar{\otimes}_{\sigma}H$, where we have $\left\langle x_{1} \otimes h_{1},x_{2} \otimes h_{2} \right\rangle = \left\langle h_{1}, \sigma (\left\langle x_{1},x_{2}\right\rangle _{E})h_{2} \right\rangle$.

A \emph{$W^{*}$-correspondence isomorphism} between two $W^{*}$-correspondences $(E_{1},A_{1})$ and $(E_{2},A_{2})$ is a pair $(\sigma, \psi)$ where $\sigma:A_{1}\to A_{2}$ is an isomorphism of $W^{*}$-algebras and $\psi:E_{1}\to E_{2}$ is a vector space isomorphism, where for $e,f\in E_{1}$ and $a,b\in A_{1}$, we have $\psi(a\cdot e\cdot b)=\sigma(a)\cdot\psi (e)\cdot \sigma(b)$ and $\langle\psi(e), \psi(f)\rangle =\sigma (\langle e,f\rangle )$. Such $\psi$ must be a weak$^{*}$-homeomorphism because the predual of a $W^{*}$-module is unique. 


 If $A$ and $B$ are $W^{*}$-algebras, then an $A$-$B$  \emph{$W^{*}$-equivalence bimodule} is an $A$-$B$ $W^{*}$-bimodule $X$ which is a $w^{*}$-full right $W^{*}$-module over $B$ and a $w^{*}$-full left $W^{*}$-module over $A$, such that the two (left and right) inner products of $X$ are compatible in the sense that $_{A}\langle x,y \rangle \cdot z=x\cdot \langle y,z \rangle _{B} $ for all $x,y,z \in X$. If $_{A}X_{B}$ and $_{C}Y_{D}$ are $W^{*}$-equivalence bimodules then a \emph{$W^{*}$-equivalence bimodule isomorphism} (as defined in \cite[Definition 1.16 and Remark 1.19]{Echterhoff2006}) is a triple $(\sigma, \phi, \pi)$, where $\sigma:A\to C$ and $\pi:B \to D$ are $W^{*}$-algebra isomorphisms and $\phi:X\to Y$ is a vector space isomorphism such that $\phi(a\cdot e\cdot b)=\sigma(a)\cdot \phi(e)\cdot \pi(b)$, $\langle\phi(e),\phi(f)\rangle_{D}=\pi(\langle e,f \rangle_{B})$ and $_{C}\langle\phi(e),\phi(f)\rangle= \sigma(_{A}\langle e,f \rangle)$. 

Given a representation $\sigma$ of $A$, an operator $T \in \ms{L}(E)$ and an operator $S\in \sigma(A)'$, the map $x \otimes h \to Tx \otimes Sh$ defines a bounded operator on $E\overbar{\otimes}_{\sigma}H$ denoted by $T\otimes S$. In particular, the representation of $\ms{L}(E)$ resulting from letting $S=I$, is Rieffel's induced representation of $\ms{L}(E)$ induced by $\sigma$. This representation is denoted by $\sigma ^{E}$. That is, $\sigma ^{E}(T)=T\otimes I$. Likewise, we say that the composition $\sigma ^{E} \circ \varphi$ is the representation of $A$ on  $E\overbar{\otimes}_{\sigma}H$ induced by $E$.

Let $\mathbb{N}_{0}=\mathbb{N} \cup \{0\}$. If $E$ is a $W^{*}$-correspondence over a $W^{*}$-algebra $A$ then we can form the tensor powers $E^{\overbar{\otimes}n}$, $n\geq 0$, where $E^{\overbar{\otimes}0}=A$. For each $n$, $E^{\overbar{\otimes}n}$ is a $W^{*}$-correspondence over $A$ with the inner product defined inductively. 
The ultraweak direct sum $\ms{F}(E):=\bigoplus^{wc}_{n\in \mathbb{N}_{0}}E^{\overbar{\otimes}n}$ is a $W^{*}$-correspondence over $A$ called the \emph{Fock space} over $E$. 
 The left action of $A$ on $\ms{F}(E)$ is given by the map $\varphi_{\infty}$ defined by $\varphi_{\infty}(a)=$ diag$(a,\varphi(a),\varphi ^{(2)}(a) ,\varphi ^{(3)}(a)   , \cdots)$ where $\varphi^{(n)}(a)(x_{1} \otimes x_{2}\otimes \cdots\otimes x_{n})=(\varphi(a)x_{1}) \otimes x_{2}\otimes \cdots\otimes x_{n} \in E^{\overbar{\otimes}n}$. Given $x\in E$, the \emph{creation operator} $T_{x}\in \ms{L}(\ms{F}(E))$ is defined by $T_{x}(\eta)=x\otimes \eta$, $\eta \in \ms{F}(E))$. That is, 
\begin{center}
$
\varphi_{\infty}(a) = 
\left( 
\begin{matrix} 
\varphi^{(0)}(a) & 0  & \cdots  \\
0 & \varphi^{(1)}(a) &  \cdots  \\
0 & 0 & \ddots 
\end{matrix}
\right)$ and 
$
T_x = 
\left( 
\begin{matrix} 
0 & 0  & \cdots & 0 \\
T_x^{(1)} & 0 & \cdots & 0 \\
0 & T_x^{(2)} & \cdots & 0 \\
0 & 0 & \ddots & 0
\end{matrix}
\right)$
\end{center}
where $T_{x}^{(n)}(x_{1} \otimes x_{2}\otimes \cdots\otimes x_{n-1})=x \otimes x_{1} \otimes x_{2}\otimes \cdots\otimes x_{n-1}$.

The \emph{tensor algebra} over $E$, denoted $\mathcal{T}_{+}(E)$ is defined to be the norm closed subalgebra of $\ms{L}(\ms{F}(E))$ generated by $\varphi_{\infty}(A)$ and $\{T_{x}:x \in E \}$. The ultraweak closure of $\mathcal{T}_{+}(E)$ in $\ms{L}(\ms{F}(E))$ is called the \emph{Hardy Algebra} of $E$, and is denoted by $H^{\infty}(E)$.

As shown in \cite{Muhly2004a}, the completely contractive representations of $H^{\infty}(E)$ are determined by pairs $(T,\sigma)$ where $\sigma:A\to B(H)$ is a normal $*$-representation of $A$ and $T:E\to B(H)$ is a linear, completely contractive $w^{*}$-continuous representation of $E$ satisfying $T(axb)=\sigma(a)T(x)\sigma(b)$ for all $x\in E$ and $a,b\in A$. The linear map $\widetilde{T}$ defined on the algebraic tensor product $E\otimes H$ by $\widetilde{T}(x\otimes h)=T(x)h$ extends to an operator of norm at most 1 on the completion $E\overbar{\otimes}_{\sigma}H$. The pairs $(T,\sigma)$ are called the \emph{completely contractive covariant representations of $E$}. The bimodule property of $T$ is equivalent to the equation $\widetilde{T}(\sigma^{E}\circ \varphi(a))= \widetilde{T}(\varphi(a)\otimes I)=\sigma(a)\widetilde{T}$ for all $a\in A$, which means that $\widetilde{T}$ intertwines the representations $\sigma$ and $\sigma^{E}\circ \varphi$ of $A$ on $H$ and $E\otimes H$ respectively. The space composed of all these intertwiners is called the intertwining space, and it is usually denoted as $\mathcal{I}(\sigma ^{E} \circ \varphi, \sigma)$ or $(E^{\sigma})^{*}$. Furthermore, for each completely contractive covariant representation $(T,\sigma)$ of a correspondence $E$ over a $W^{*}$ algebra $A$, there is a unique completely contractive representation $\rho$ of the algebra $\mathscr{T} _{+}(E)$ satisfying $\rho (T_{x})=T(x)$ and $\rho(\varphi _{\infty}(a))=\sigma(a)$ for each $x\in E, a \in A$. The map $(T,\sigma)\mapsto \rho$ is bijective and onto the set of all completely contractive representations of $\mathscr{T} _{+}(E)$ whose restrictions to $\varphi(A)$ are ultraweakly continuous. If $||\widetilde{T}|| < 1$ then $\rho$ extends to an ultraweakly continuous representation {$\sigma \times \widetilde{T}$} of $H^{\infty}(E)$.  
That is, the ultraweakly continuous completely contractive representations of $H^{\infty}(E)$ are parametrized by the elements in the unit ball
\begin{equation*}
\mathbb{D}((E^{\sigma})^{*})=\{\eta^{*}\in B(E\otimes_{\sigma}H, H)\enspace | \enspace \eta^{*}(\varphi(a)\otimes I)=\sigma(a)\eta^{*} \text{and } ||\eta||<1\}
\end{equation*}
%

The space $E^{\sigma}$ is itself a $W^{*}$-correspondence over $\sigma(A)'$ with the actions given by $a\cdot \eta=(I_{E}\otimes a)\eta$ and $\eta \cdot a=\eta a$ for $\eta \in E$ and $a\in A$.
The $\sigma(A)'$-valued inner product is given by $\langle \eta, \xi \rangle=\eta^{*}\xi$. 

A \emph{dual operator algebra} is an operator algebra $A$ which is also a dual operator space. Any weak$^{*}$-closed subalgebra of $B(H)$ is a dual operator algebra and conversely, for any dual operator algebra $A$, tA \emph{dual operator algebra} is an operator algebra $A$ which is also a dual operator space. Any weak$^{*}$-closed subalgebra of $B(H)$ is a dual operator algebra and conversely, for any dual operator algebra $A$, there is a Hilbert space $H$ and a $w^{*}$-continuous completely isometric homomorphism $\varphi:A\to B(H)$. By the Krein-Smulian theorem, $\varphi(A)$ is a weak$^{*}$-closed subalgebra of $B(H)$, so we can identify $A$ with $\varphi(A)$ as dual operator algebras. A \emph{normal representation} of a dual operator algebra is a completely contractive, $w^{*}$-continuous homomorphism  $\varphi:A\to B(H)$. The \emph{category of normal representations of $A$} is denoted by  $_{A}\mathfrak{M}$. The objects of $_{A}\mathfrak{M}$ are pairs $(H,\varphi)$ where $H$ is a Hilbert space and $\varphi:A\to B(H)$ is a unital completely contractive, $w^{*}$-continuous homomorphism. If $(H_{i},\varphi_{i}), i=1,2,$ are objects in $_{A}\mathfrak{M}$, the morphisms are given by Hom$_{A}(H_{1},H_{2})=\{T\in B(H_{1},H_{2}):T\varphi_{1}(a)=\varphi_{2}(a)T\thinspace ,\thinspace a\in A\}$. 

If $A$ and $B$ are dual operator algebras, a \emph{dual operator $A-B$-bimodule} is a nondegenerate operator $A-B$-bimodule $X$, which is also a dual operator space, such that the module actions are separately weak$^{*}$-continuous. If $X$ and $Y$ are right operator modules over 
$B$, then we write $CB_{B}(X,Y)$ for the set of completely bounded right $B$-module maps from $X$ to $Y$. If $X$ and $Y$ are left operator modules over 
$A$, then we write $_{A}CB(X,Y)$ for the set of completely bounded left $A$-module maps from $X$ to $Y$. Similarly, we write $w^{*}CB_{B}(X,Y)$ for the set of $w^{*}$-continuous completely bounded right $B$-module maps from $X$ to $Y$.

\section{Morita Equivalence of $W^{*}$-correspondences and\\Hardy Algebras} \label{General}
In 2000, Muhly and Solel introduced a notion of (strong) Morita equivalence for $C^{*}$-correspondences \cite[Definition 2.1]{Muhly2000a}. This notion can be extended to $W^{*}$-correspondences in the following way: $W^{*}$-correspondences $_{A}E_{A}$ and $_{B}F_{B}$ are called \emph{(weakly) Morita equivalent} if the $W^{*}$-algebras $A$ and $B$ are weakly Morita equivalent via a $W^{*}$-equivalence bimodule $X$ for which there is an $A$-$B$ \thinspace $W^{*}$-correspondence isomorphism $W$ from $X \overbar{\otimes}_{B}F$ onto $E\overbar{\otimes}_{A}X$. 
In this case, we will write $E\stackrel{WME}{\sim}_{X}F$. 
Recall that $X\overbar{\otimes}_{B}F$ and $E\overbar{\otimes}_{A}X$ are the self dual completions of the balanced $C^{*}$-module interior tensor products. Remember also that the $C^{*}$-module interior tensor product coincides with the module Haagerup tensor product while the $W^{*}$-module tensor product $\overbar{\otimes}_{A}$ coincides with the module weak$^{*}$ Haagerup tensor product $\otimes _{w^{*}h}$, which is the same as the extended Haagerup tensor product $\otimes _{eh}$ \cite [8.5.40, 1.7.1.5]{Blecher2004d}.
Throughout this section, $_{A}E_{A}$ and $_{B}F_{B}$ are $W^{*}$-correspondences over the $W^{*}$-algebras $A$ and $B$.

In \cite{Blecher2000}, Blecher, Muhly and Paulsen generalized Rieffel's strong Morita equivalence of $C^{*}$-algebras (\cite{R1974a}) to general operator algebras. Their generalization is a natural variation of the theory of Morita equivalence that one finds in pure algebra, where the description of Morita equivalence is given in terms of Morita contexts (these contexts are also found in the pure algebra literarture under the name: sets of pre-equivalence data). Their definition is the following: Let $A$ and $B$ be unital or approximately unital operator algebras. Let $X$ be an $A$-$B$ operator bimodule, and let $Y$ a $B$-$A$ operator bimodule. Let $(\cdot , \cdot)$ be a completely bounded bilinear map from $X\times Y$ to $A$, balanced over $B$. Let $[\cdot , \cdot]$ be a completely bounded bilinear map from $Y\times X$ to $B$, balanced over $A$. The 6-tuple $(A,B,X,Y,(\cdot , \cdot),[\cdot , \cdot])$ is called a \emph{(strong) Morita context} for $A$ and $B$ if the module actions are completely contractive and:

\begin{itemize}[leftmargin=*]
\item $(x_{1},y)\cdot x_{2}=x_{1}\cdot [y,x_{2}]\quad x_{1}, x_{2}\in X, y\in Y$.

\NoIndent{$[y_{1},x]\cdot y_{2}=y_{1}\cdot (x,y_{2}) y_{1},\quad y_{2}\in Y, x\in X$.} 

\item The linear map from $X\otimes _{h}Y$ to $A$ determined by $(\cdot , \cdot)$ is a complete quotient map onto $A$. 

\item The linear map from $Y\otimes _{h}X$ to $B$ determined by $[\cdot , \cdot]$ is a complete quotient map onto $B$.
\end{itemize}
As shown in \cite{Blecher2000}, a (strong) Morita context determines a pair of equivalence functors between the categories of operator modules of both operator algebras in the context. It also determines an equivalence between the categories of Hilbert modules of both operator algebras. Furthermore, the Morita context gives rise to an isomorphism between the lattices of ideals of both operator algebras in the context.
One important shortcoming of this notion of strong Morita equivalence is that if the two operator algebras $A$ and $B$ that we are comparing, are dual operator algebras then the strong Morita context does not capture this duality. More precisely, the two functors that are derived from the context, do not give an equivalence between the categories of normal representations of $A$ and $B$.

 In \cite{Eleftherakis2008b}, Eleftherakis formulated a version of Morita theory for dual operator algebras using ternary rings of operators (TROs) and a relation called $\Delta$-equivalence. In \cite{Eleftherakis2008}, Eleftherakis and Paulsen showed that this notion of $\Delta$-equivalence 
  is equivalent to the notion of weak$^{*}$ stable isomorphism of dual operator algebras. In \cite{Blecher2008a}, Blecher and Kashyap introduced a new notion of weak Morita equivalence of dual operator algebras which includes most of the examples of Morita-like equivalence (in the dual setting) found in the  literature. 
  Also, this approach contains the notion of stable isomorphism given by Eleftherakis and Paulsen. That is, if two unital dual operator algebras are weak$^{*}$ stably isomorphic then they are weakly Morita equivalent in the sense of \cite{Blecher2008a}. 

In the following definitions, $A$ and $B$ are dual operator algebras, $X$ and $Y$ are dual operator bimodules, $X$ is an $A$-$B$ bimodule and $Y$ is an $B$-$A$-bimodule. The following two definitions of Morita equivalence for unital dual operator algebras are found in \cite[section 3]{Blecher2008a}. $A$ and $B$ are called \emph{weak$^{*}$ Morita equivalent}, with equivalence bimodules $X$ and $Y$, if $A\cong X\overbar{\otimes}_{B}Y$ as dual operator $A$-bimodules, and similarly $B \cong Y\overbar{\otimes}_{B}X$ as dual operator $B$-bimodules. 
 $(A,B,X,Y)$ is called a \emph{weak$^{*}$ Morita Context}. 

For the next definition, assume that $(\cdot , \cdot):X\times Y\to A$ and $[\cdot ,\cdot ]:Y\times X \to B $ are separately weak$^{*}$-continuous completely contractive bilinear maps. We will use the $6$-tuple, or \emph{context,} $(A,B,X,Y,(\cdot , \cdot),[\cdot ,\cdot ])$. We say that $A$ is \emph{weakly Morita equivalent} to $B$ if there exists a 6-tuple as above, there exist $w^{*}$-dense operator algebras $A'$ and $B'$ in $A$ and $B$ respectively, there exists a  $w^{*}$-dense operator $A'$-$B'$-submodule $X'$ in $X$, and a  $w^{*}$-dense operator $B'$-$A'$-submodule $Y'$ in $Y$, such that the subcontext $(A',B',X',Y')$, together with restrictions of the maps $(\cdot , \cdot)$ and $[\cdot ,\cdot ]$, is a strong Morita context in the sense of \cite[Definition 3.1]{Blecher2000}. $(A,B,X,Y,(\cdot , \cdot),[\cdot ,\cdot ])$ is called a \emph{weak Morita Context}. 
 
 
Our goal in this section is to show that if two $W^{*}$-correspondences $(E,A)$ and $(F,B)$ are (weakly) Morita equivalent then their Hardy algebras $H^{\infty} (E)$ and $H^{\infty} (F)$ are weakly Morita equivalent and weak$^{*}$ Morita equivalent.
\begin{lemma} \label{mA}
Let $X$ be  an $A$-$B$ $W^{*}$-equivalence bimodule. Then the pairs $(I_{A}, m_{A})$ and $(I_{B}, m_{B})$, where $m_{A}:X\overbar{\otimes}_{B} \widetilde{X}\to A $ and $m_{B}:\widetilde{X}\overbar{\otimes}_{A}  X \to B$ are defined by $m_{A}(x\otimes \widetilde{y})=_{A}\langle x,y \rangle$ and $m_{B}(\widetilde{x} \otimes y)=\langle x,y \rangle_{B}$, are $W^{*}$-correspondence isomorphisms.
\end{lemma}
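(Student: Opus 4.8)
The plan is to verify directly that $(I_A,m_A)$ meets the three requirements in the definition of a $W^*$-correspondence isomorphism between the $A$-correspondence $X\overbar{\otimes}_B\widetilde{X}$ and the trivial $A$-correspondence $A$ (where $A$ carries the inner product $\langle a,a'\rangle=a^*a'$ and multiplication as its bimodule actions); the argument for $(I_B,m_B)$ is identical after interchanging the roles of $A$ and $B$ and of $X$ and $\widetilde{X}$. Since $I_A$ is trivially a normal $*$-isomorphism of $A$, everything reduces to the linear map $m_A$. First I would check that $m_A(x\otimes\widetilde{y})={}_{A}\langle x,y\rangle$ is well defined on the $B$-balanced tensor product: using the identity ${}_{A}\langle x\cdot b,y\rangle={}_{A}\langle x,y\cdot b^*\rangle$ together with the conjugate-module relation $b\cdot\widetilde{y}=\widetilde{y\cdot b^*}$ gives $m_A((x\cdot b)\otimes\widetilde{y})=m_A(x\otimes(b\cdot\widetilde{y}))$. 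The bimodule property $m_A(a\cdot\xi\cdot a')=a\cdot m_A(\xi)\cdot a'$ then follows on elementary tensors from $A$-linearity of ${}_{A}\langle\cdot,\cdot\rangle$ in the first variable and the relation $\widetilde{y}\cdot a'=\widetilde{(a')^*\cdot y}$ on the right.

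The crux is inner-product preservation $\langle m_A(\xi),m_A(\eta)\rangle=\langle\xi,\eta\rangle$, where the left-hand side is $m_A(\xi)^*m_A(\eta)$. On elementary tensors $\xi=x_1\otimes\widetilde{y_1}$, $\eta=x_2\otimes\widetilde{y_2}$ the left side equals ${}_{A}\langle y_1,x_1\rangle\,{}_{A}\langle x_2,y_2\rangle$, which I would rewrite via $A$-linearity as ${}_{A}\langle {}_{A}\langle y_1,x_1\rangle\, x_2,\,y_2\rangle$ and then collapse using the equivalence-bimodule compatibility ${}_{A}\langle y_1,x_1\rangle\, x_2=y_1\cdot\langle x_1,x_2\rangle_{B}$ to obtain ${}_{A}\langle y_1\cdot\langle x_1,x_2\rangle_{B},\,y_2\rangle$. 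Expanding the right side $\langle x_1\otimes\widetilde{y_1},x_2\otimes\widetilde{y_2}\rangle=\langle\widetilde{y_1},\langle x_1,x_2\rangle_{B}\cdot\widetilde{y_2}\rangle$ through the same conjugate-module identities and the relation ${}_{A}\langle u,v\cdot b\rangle={}_{A}\langle u\cdot b^*,v\rangle$ yields the identical expression, so the two sides agree. This single computation is exactly where the defining compatibility ${}_{A}\langle x,y\rangle\cdot z=x\cdot\langle y,z\rangle_{B}$ of the equivalence bimodule is used.

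Finally I would promote these facts from elementary tensors to the full self-dual completion and establish bijectivity. Because $m_A$ preserves the $A$-valued inner product it is isometric, hence extends to the self-dual completion $X\overbar{\otimes}_B\widetilde{X}$ as an inner-product-preserving adjointable module map; injectivity is then immediate. Surjectivity is the step I expect to require the most care: the range of $m_A$ contains every ${}_{A}\langle x,y\rangle$, whose $w^*$-closed linear span is all of $A$ precisely because $X$ is $w^*$-full as a left $A$-module. Since an inner-product-preserving map between self-dual $W^*$-modules has orthogonally complemented — hence $w^*$-closed — range, the range is a $w^*$-closed submodule of $A$ containing this $w^*$-dense span, and is therefore all of $A$. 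Thus $m_A$ is a vector-space isomorphism, and by the uniqueness of the predual it is automatically a $w^*$-homeomorphism, completing the proof that $(I_A,m_A)$ is a $W^*$-correspondence isomorphism.

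The reliance on $w^*$-fullness and the passage from the algebraic span to the self-dual completion is the main obstacle; the algebraic identities above are otherwise routine. The statement for $(I_B,m_B)$ follows verbatim, replacing ${}_{A}\langle x,y\rangle$ by $\langle x,y\rangle_{B}$ and left-fullness by right-fullness of $X$.
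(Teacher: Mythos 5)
Your proposal is correct and follows essentially the same route as the paper's proof: verify the bimodule property, show $m_A$ preserves the inner product via the compatibility ${}_{A}\langle x,y\rangle\cdot z=x\cdot\langle y,z\rangle_{B}$, and deduce surjectivity from isometry together with $w^{*}$-fullness of $X$ as a left $A$-module. Your treatment of the final step is in fact slightly more careful than the paper's (which passes from "closed range" and "$w^{*}$-dense range" to surjectivity without comment), since you note explicitly that the range of an inner-product-preserving map between self-dual $W^{*}$-modules is orthogonally complemented and hence $w^{*}$-closed.
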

\begin{proof}
The two identities are obviously $W^{*}$-algebra isomorphisms.
\begin{align*}
m_{A}(a\cdot (x\otimes \widetilde{y})\cdot b)&=m_{A}((a\cdot x)\otimes (\widetilde{y}\cdot b))=m_{A}((a\cdot x)\otimes (b^{*} \cdot \widetilde{y}))\\
&=_{A}\langle a\cdot x,b^{*}\cdot y \rangle =a _{A}\langle x, y \rangle b \\
&=am_{A}(x\otimes y)b
\end{align*}
\begin{align*}
\langle m_{A}(x\otimes \widetilde{y}),m_{A}(z\otimes \widetilde{w})\rangle _{A}&=\langle _{A}\langle x, y \rangle, _{A}\langle z, w \rangle \rangle _{A}= _{A}\langle x, y \rangle^{*}\cdot_{A}\langle z, w \rangle =_{A}\langle y, x \rangle \cdot_{A}\langle z, w \rangle\\
&=_{A}\langle _{A} \langle y, x \rangle \cdot z, w\rangle =_{A}\langle y \cdot \langle x, z \rangle_{B}, w\rangle =_{A}\langle y, w\cdot \langle x, z \rangle_{B}^{*}\rangle \\
&=\langle \widetilde{y},\widetilde{w\cdot \langle x, z \rangle_{B}^{*}}\rangle_{A}=\langle \widetilde{y},\langle x, z \rangle_{B} \cdot \widetilde{w}\rangle_{A}=\langle x\otimes \widetilde{y},z\otimes \widetilde{w}\rangle _{A}&
\end{align*}
\begin{align*}
_{A}\langle m_{A}(x\otimes \widetilde{y}),m_{A}(z\otimes \widetilde{w})\rangle &=_{A}\langle _{A}\langle x, y \rangle, _{A}\langle z, w \rangle \rangle = _{A}\langle x, y \rangle\cdot_{A}\langle z, w \rangle^{*} =_{A}\langle x, y \rangle \cdot_{A}\langle w, z \rangle\\
&=_{A}\langle _{A} \langle x, y \rangle \cdot w, z\rangle =_{A}\langle x \cdot \langle y, w \rangle_{B}, z \rangle =_{A}\langle x, _{B}\langle \widetilde{y},\widetilde{w} \rangle, z\rangle \\
&=_{A} \langle x\otimes \widetilde{y},z\otimes \widetilde{w}\rangle &
\end{align*}
That is, $m_{A}$ preserves both left and right inner products. So it is isometric, hence injective with closed range. Since $m_{A}$ is defined in terms of the left inner product of $X$, and by definition, $X$ is a $w^{*}$-full left $W^{*}$-module over $A$, $m_{A}$ has $w^{*}$-dense range in $A$. So $m_{A}$ is surjective, thus $(I_{A}, m_{A})$ is a $W^{*}$-correspondence isomorphism. Similarly, $(I_{B}, m_{B})$ is also a $W^{*}$-correspondence isomorphism.
\end{proof}
\begin{theorem}\label{Moritahardy}
If two $W^{*}$-correspondences $(E,A)$ and $(F,B)$ are (weakly) Morita equivalent then their Hardy algebras $H^{\infty} (E)$ and $H^{\infty} (F)$ are weakly Morita equivalent and weak$^{*}$ Morita equivalent (as dual operator algebras) in the sense of \cite{Blecher2008a}.
\end{theorem}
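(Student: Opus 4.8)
The plan is to lift the correspondence-level equivalence to the Fock space and then to the Hardy algebras, using Lemma~\ref{mA} as the engine at the bottom level. First I would iterate the isomorphism $W:X\overbar{\otimes}_{B}F\to E\overbar{\otimes}_{A}X$. Writing $W_{1}=W$ and using associativity and functoriality of $\overbar{\otimes}$, I would build by induction isomorphisms $W_{n}:X\overbar{\otimes}_{B}F^{\overbar{\otimes}n}\to E^{\overbar{\otimes}n}\overbar{\otimes}_{A}X$ from the factorization $X\overbar{\otimes}_{B}F^{\overbar{\otimes}n}\cong(E\overbar{\otimes}_{A}X)\overbar{\otimes}_{B}F^{\overbar{\otimes}(n-1)}\cong E\overbar{\otimes}_{A}(E^{\overbar{\otimes}(n-1)}\overbar{\otimes}_{A}X)=E^{\overbar{\otimes}n}\overbar{\otimes}_{A}X$. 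Taking the ultraweak direct sum over $n\in\mathbb{N}_{0}$ then yields an $A$-$B$ $W^{*}$-correspondence isomorphism $W_{\infty}:X\overbar{\otimes}_{B}\mathscr{F}(F)\to\mathscr{F}(E)\overbar{\otimes}_{A}X$. That the direct sum commutes with $\overbar{\otimes}_{A}$ is exactly where I would invoke the identification of $\overbar{\otimes}_{A}$ with the weak$^{*}$ Haagerup (equivalently extended Haagerup) tensor product \cite[8.5.40, 1.7.1.5]{Blecher2004d}, since that product distributes over ultraweak direct sums.

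Next I would take as equivalence bimodules $\mathcal{X}:=\mathscr{F}(E)\overbar{\otimes}_{A}X$ and $\mathcal{Y}:=\widetilde{X}\overbar{\otimes}_{A}\mathscr{F}(E)$, together with the identifications $\mathcal{X}\cong X\overbar{\otimes}_{B}\mathscr{F}(F)$ and $\mathcal{Y}\cong\mathscr{F}(F)\overbar{\otimes}_{B}\widetilde{X}$ coming from $W_{\infty}$ and its dual. The left action of $H^{\infty}(E)$ on $\mathcal{X}$ is the concrete one: every $T\in H^{\infty}(E)\subseteq\mathscr{L}(\mathscr{F}(E))$ is a right $A$-module map, so $T\otimes I_{X}$ is a well-defined operator on $\mathscr{F}(E)\overbar{\otimes}_{A}X$, and $T\mapsto T\otimes I_{X}$ is a unital, completely contractive, weak$^{*}$-continuous homomorphism. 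The right action of $H^{\infty}(F)$ is to be read off from the picture $\mathcal{X}\cong X\overbar{\otimes}_{B}\mathscr{F}(F)$, where $H^{\infty}(F)$ acts through the $\mathscr{F}(F)$-factor; symmetrically $\mathcal{Y}$ becomes a left $H^{\infty}(F)$-, right $H^{\infty}(E)$-bimodule. Establishing that these are genuine dual operator bimodules (separate weak$^{*}$ continuity and complete contractivity of both actions) is the first place real care is needed.

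With the bimodules in hand, I would define the two pairings by collapsing the middle factor with the maps of Lemma~\ref{mA}. For $(\cdot,\cdot):\mathcal{X}\times\mathcal{Y}\to H^{\infty}(E)$ I would use $\mathscr{F}(E)\overbar{\otimes}_{A}X\overbar{\otimes}_{B}\widetilde{X}\overbar{\otimes}_{A}\mathscr{F}(E)$, apply $m_{A}:X\overbar{\otimes}_{B}\widetilde{X}\to A$ to reach $\mathscr{F}(E)\overbar{\otimes}_{A}\mathscr{F}(E)$, and map into $\mathscr{L}(\mathscr{F}(E))$ through the creation/shift structure so that the image is analytic, i.e.\ lands in $H^{\infty}(E)$; dually for $[\cdot,\cdot]:\mathcal{Y}\times\mathcal{X}\to H^{\infty}(F)$ using $m_{B}$. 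The compatibility relations $(x_{1},y)\cdot x_{2}=x_{1}\cdot[y,x_{2}]$ and $[y_{1},x]\cdot y_{2}=y_{1}\cdot(x,y_{2})$ then reduce, level by level on the Fock grading, to the associativity identity $_{A}\langle x,y\rangle\cdot z=x\cdot\langle y,z\rangle_{B}$ for the equivalence bimodule $X$, exactly as in the proof of Lemma~\ref{mA}. For the weak$^{*}$ Morita equivalence of \cite{Blecher2008a} I would then verify $\mathcal{X}\overbar{\otimes}_{H^{\infty}(F)}\mathcal{Y}\cong H^{\infty}(E)$ and $\mathcal{Y}\overbar{\otimes}_{H^{\infty}(E)}\mathcal{X}\cong H^{\infty}(F)$ as dual operator bimodules, the surjectivity onto $H^{\infty}(E)$ being supplied by the $w^{*}$-fullness of $X$ (which drives $m_{A}$ onto a $w^{*}$-dense subset of $A$, by Lemma~\ref{mA}) propagated through all Fock levels.

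Finally, for the weak Morita equivalence of \cite{Blecher2008a}, I would exhibit a strong Morita subcontext inside the weak one: take the $w^{*}$-dense subalgebras $\mathcal{T}_{+}(E)\subseteq H^{\infty}(E)$ and $\mathcal{T}_{+}(F)\subseteq H^{\infty}(F)$, together with the norm-closed submodules generated by $\mathcal{T}_{+}(E)\otimes_{A}X$ and $\widetilde{X}\otimes_{A}\mathcal{T}_{+}(E)$, and invoke Muhly and Solel's theorem that Morita equivalent $C^{*}$-correspondences have (strongly) Morita equivalent tensor algebras \cite{Muhly2000a} to see that this subcontext, with the restricted pairings, is a strong Morita context in the sense of \cite{Blecher2000}. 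I expect the main obstacle to be precisely the middle step: pinning down the right $H^{\infty}(F)$-module structure and verifying that the pairings are completely contractive, weak$^{*}$-continuous \emph{complete quotient maps onto the non-self-adjoint algebras}. The subtlety is that $H^{\infty}(F)$ acts on $\mathscr{F}(F)$ on the left by creation operators, so realizing a commuting right action on $X\overbar{\otimes}_{B}\mathscr{F}(F)$ — and checking that the resulting pairings hit the shift (rather than only compact) operators and hence surject onto the whole Hardy algebra — is the heart of the matter; the $w^{*}$-Haagerup description of $\overbar{\otimes}_{A}$ is what makes the required weak$^{*}$-density and quotient statements tractable.
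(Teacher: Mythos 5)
Your opening move --- iterating $W$ to produce $W_{n}:X\overbar{\otimes}_{B}F^{\overbar{\otimes}n}\to E^{\overbar{\otimes}n}\overbar{\otimes}_{A}X$ and summing to $W_{\infty}$ --- is sound and does appear in the actual argument (as the maps $W_{k}=(I_{E}\otimes W_{k-1})(W_{1}\otimes I_{F^{\overbar{\otimes}(k-1)}})$), and your closing appeal to Muhly and Solel's strong Morita context for the tensor algebras is the right citation. But the middle of your proof has a genuine gap: the objects $\mathcal{X}=\mathscr{F}(E)\overbar{\otimes}_{A}X\cong X\overbar{\otimes}_{B}\mathscr{F}(F)$ and $\mathcal{Y}=\widetilde{X}\overbar{\otimes}_{A}\mathscr{F}(E)$ cannot serve as the Morita equivalence bimodules. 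The proposed right $H^{\infty}(F)$-action ``through the $\mathscr{F}(F)$-factor'' is not well defined: $X\overbar{\otimes}_{B}\mathscr{F}(F)$ is balanced over $B$ acting via $\varphi_{\infty}(B)$ on $\mathscr{F}(F)$, and a generic $T\in H^{\infty}(F)$ does not commute with $\varphi_{\infty}(B)$ (for a creation operator, $T_{f}\varphi_{\infty}(b)=T_{f\cdot b}$ while $\varphi_{\infty}(b)T_{f}=T_{b\cdot f}$), so $I_{X}\otimes T$ does not descend to the balanced tensor product; and even where it does, a \emph{left} action of $H^{\infty}(F)$ commuting with $H^{\infty}(E)\otimes I_{X}$ would only make $\mathcal{X}$ an $H^{\infty}(E)$-$H^{\infty}(F)^{\mathrm{op}}$ bimodule. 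Moreover, collapsing the middle factor with $m_{A}$ yields $\mathscr{F}(E)\overbar{\otimes}_{A}\mathscr{F}(E)$, a module rather than the algebra, so the identification $\mathcal{X}\overbar{\otimes}_{H^{\infty}(F)}\mathcal{Y}\cong H^{\infty}(E)$ required by \cite{Blecher2008a} is out of reach; your phrase ``map into $\mathscr{L}(\mathscr{F}(E))$ through the creation/shift structure'' is precisely the step left undefined.

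The paper sidesteps all of this by working inside a linking algebra. It forms $L=\left(\begin{smallmatrix}B&\widetilde{X}\\X&A\end{smallmatrix}\right)$ and a $W^{*}$-correspondence $Z$ over $L$ whose Fock space contains the invariant submodule $\mathscr{F}'(Z)$ with first column $\mathscr{F}(F)$ and $\mathscr{F}(E)\overbar{\otimes}_{A}X$; with $p,q$ the two coordinate projections one gets $pH^{\infty}(Z)p\cong H^{\infty}(F)$ and $qH^{\infty}(Z)q\cong H^{\infty}(E)$, and the equivalence bimodules are the \emph{off-diagonal corners} $pH^{\infty}(Z)q$ and $qH^{\infty}(Z)p$ --- weak$^{*}$-closed operator spaces between the two representations, not Fock modules --- with pairings given by multiplication in $H^{\infty}(Z)$. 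That choice makes the module actions and pairings automatically well defined, completely contractive and separately weak$^{*}$-continuous; the strong subcontext then comes from \cite[Theorem 3.2 (3)]{Muhly2000a} applied to $p\mathcal{T}_{+}(Z)q$ and $q\mathcal{T}_{+}(Z)p$, and the weak$^{*}$ Morita equivalence follows from \cite[Corollary 3.4]{Blecher2008a}. To salvage your outline you would need to replace $\mathcal{X}$ and $\mathcal{Y}$ by these operator corners (equivalently, by suitable weak$^{*}$-closed spaces of intertwiners between $\mathscr{F}(F)$ and $\mathscr{F}(E)\overbar{\otimes}_{A}X$); your maps $W_{n}$ are exactly what is needed to compute with them.
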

\begin{proof}
We model our proof on the proof given in \cite{Muhly2000a} for the $C^{*}$ case. $A$ and $B$ are weakly Morita equivalent $W^{*}$-algebras via a $W^{*}$-equivalence bimodule $_{A}X_{B}$ and there is a $W^{*}$-correspondence isomorphism $W: X \overbar{\otimes}_{B}F \to E\overbar{\otimes}_{A}X$. Let $\mathcal{I}$ (and  $\mathcal{I}^{w} $) denote the norm closure in $A$ (and the $w^{*}$-closure in $A$) of the span of the range of the $A$-valued inner product on $X$. Form the linking $W^{*}$-algebra $L$ for $\widetilde{X}$. That is, let
\begin{equation*}
L = 
\left( 
\begin{matrix} 
B & \widetilde{X} \\
X & A
\end{matrix}
\right) .
\end{equation*}
 
Let $Y_{1}$ be the first column, $
\left( 
\begin{matrix} 
B  \\
X 
\end{matrix}
\right) $. This is a $W^{*}$-module over $B$ since it is the column sum of $B$ and $X$. Likewise, the second column, $Y_{2}=
\left( 
\begin{matrix} 
\widetilde{X}  \\
A 
\end{matrix}
\right) $, is a $W^{*}$-module over $A$. $CB_{B}(X)=B_{B}(X)=\ms{L}_{B}(X)= M(K_{B}(X))\cong M(\mathcal{I})= \mathcal{I} ^{w} = A$ 
and 
$CB_{B}(B)=B$. 
 Since $X$ is a selfdual space, $CB_{B}(B,X)\cong X$ and $CB_{B}(X,B)\cong \widetilde{X}$. So we have that $L \cong CB_{B}(Y_{1})$. Similarly,  $CB_{A}(\widetilde{X})\cong B$, $CB_{A}(A,\widetilde{X})\cong \widetilde{X} $, $CB_{A}(\widetilde{X},A)\cong X$ and $CB_{A}(A)\cong A$. So $L \cong CB_{A}(Y_{2})$. That is, $CB_{B}(Y_{1})$ and $CB_{A}(Y_{2})$ are $W^{*}$-algebras. For more information on the previous identifications, see \cite[8.5.5, 8.1.15, 8.5.3, 8.5.1, 8.5.13, 8.5.5 (1), 2.6.1, 3.5.4 (2)]{Blecher2004d} for example.

Now we have that $CB_{B}(Y_{1},F)$ is a module over the $W^{*}$-algebra $CB_{B}(Y_{1})=L$, where right multiplication is given by composition and the inner product is given by $\langle T,S \rangle= T^{*}S \in CB_{B}(Y_{1})=L$. By \cite[equation $(\dagger \dagger)$]{Blecher1997a}, $CB_{B}(Y_{1},F)=(Y_{1}\hat{\otimes} _{B}F_{*})^{*}$, where 
 $\hat{\otimes} _{B}$ is the module operator space projective tensor product. 
So $CB_{B}(Y_{1},F)$ is a $W^{*}$-module over $CB_{B}(Y_{1})=L$ (by \cite[corollary 8.5.7]{Blecher2004d}). Similarly, $CB_{A}(Y_{2},E)$ is a $W^{*}$-module over $CB_{A}(Y_{2})=L$. So their sum    
\begin{align*}
Z=CB_{B}(Y_{1},F)\oplus CB_{A}(Y_{2},E)
\end{align*}
is a  $W^{*}$-module over $L$.  
Since $CB_{B}(Y_{1},F)$ can be written as $(CB_{B}(B,F),CB_{B}(X,F))$ and $CB_{A}(Y_{2},E)$ is $(CB_{A}(\widetilde{X},E),CB_{A}(A,E))$, we have
\begin{equation*}
Z = 
\left( 
\begin{matrix} 
CB_{B}(B,F) & CB_{B}(X,F) \\
CB_{A}(\widetilde{X},E) & CB_{A}(A,E)
\end{matrix}
\right) 
\end{equation*}
The right action of $L$ on $Z$ is realized as the usual matrix multiplication, and the inner product is given by 
\begin{align*}
\langle 
\left( 
\begin{matrix} 
T_{11} & T_{12} \\
T_{21} & T_{22}
\end{matrix}
\right),
 \left( 
\begin{matrix} 
S_{11} & S_{12} \\
S_{21} & S_{22}
\end{matrix}
\right ) \rangle =
\left( 
\begin{matrix} 
T_{11}^{*} & T_{12}^{*} \\
T_{21}^{*} & T_{22}^{*}
\end{matrix}
\right)
 \left( 
\begin{matrix} 
S_{11} & S_{12} \\
S_{21} & S_{22}
\end{matrix}
\right ) \in L
\end{align*}
Since $CB_{B}(B,F)\cong F$, $CB_{B}(X,F)\cong F\otimes \widetilde{X}$,  $CB_{A}(\widetilde{X},E)\cong E\otimes X$ and $_{A}CB(A,E) \cong E$, we have 
\begin{align*}
Z = 
\left( 
\begin{matrix} 
F & F\overbar{\otimes}_{B} \widetilde{X} \\
E \overbar{\otimes}_{A} X & E
\end{matrix}
\right) 
\end{align*}
The right action then becomes 
\begin{align*} 
\left( 
\begin{matrix} 
f_{1} & f_{2} \otimes  \widetilde{x}  \\
e_{1} \otimes y & e_{2}
\end{matrix}
\right)&
 \left( 
\begin{matrix} 
b & z \\
u & a
\end{matrix}
\right )\\
& =
\left( 
\begin{matrix} 
f_{1}b+I_{F} \otimes m_{B}(f_{2} \otimes \widetilde{x} \otimes u) & f_{1} \otimes \widetilde{z}+f_{2} \otimes \widetilde{x}a \\
e_{1} \otimes yb+e_{2} \otimes u & I_{E} \otimes m_{A}(e_{1} \otimes y \otimes  \widetilde{z})+e_{2}a
\end{matrix}
\right )
\end{align*}
and the inner product is

\begin{align*}
\langle 
\left( 
\begin{matrix} 
f_{1} & f_{2} \otimes  \widetilde{x}  \\
e_{1} \otimes y & e_{2}
\end{matrix}
\right)&,
\left( 
\begin{matrix} 
g_{1} & g_{2} \otimes  \widetilde{z}  \\
k_{1} \otimes u & k_{2}
\end{matrix}
\right)
\rangle \\
& =
\left( 
\begin{matrix} 
\langle f_{1},g_{1} \rangle + \langle e_{1} \otimes y,k_{1} \otimes u \rangle _{B}& \langle f_{1},g_{2} \rangle \widetilde{z} +\widetilde{y} \langle e_{1} \otimes k_{2}\rangle \\
x\langle f_{2},g_{1} \rangle + \langle e_{2},k_{1}\rangle u & \langle f_{2} \otimes \widetilde{x},g_{2} \otimes \widetilde{z} \rangle _{A} +\langle e_{2}, k_{2}\rangle
\end{matrix}
\right).
\end{align*} 
Let $\varphi_{Z}:L\to \ms{L}(Z)$ be defined by
\begin{eqnarray*}
\varphi_{Z}(
\left( 
\begin{matrix} 
b & 0 \\
0 & a
\end{matrix}
\right))
\left( 
\begin{matrix} 
f_{1} & f_{2} \otimes  \widetilde{z}  \\
e_{1} \otimes v & e_{2}
\end{matrix}
\right) &=& \left( 
\begin{matrix} 
\varphi_{F}(b)f_{1} & \varphi_{F}(b) f_{2} \otimes  \widetilde{z}  \\
\varphi_{E}(a)e_{1} \otimes v & \varphi_{E}(a)e_{2}
\end{matrix}
\right),\\
\varphi_{Z}(
\left( 
\begin{matrix} 
0 & 0 \\
x & 0
\end{matrix}
\right))
\left( 
\begin{matrix} 
f_{1} & f_{2} \otimes  \widetilde{z}  \\
e_{1} \otimes v & e_{2}
\end{matrix}
\right)  &=& \left( 
\begin{matrix} 
0 & 0  \\
W(x \otimes f_{1}) & (I_{E} \otimes m_{A})(W \otimes I_{\widetilde{X}})(x \otimes f_{2} \otimes \widetilde{z})
\end{matrix}
\right),\\
\varphi_{Z}(
\left( 
\begin{matrix} 
0 & \widetilde{y} \\
0 & 0
\end{matrix}
\right))
\left( 
\begin{matrix} 
f_{1} & f_{2} \otimes  \widetilde{z}  \\
e_{1} \otimes v & e_{2}
\end{matrix}
\right)  &=& \left( 
\begin{matrix} 
(m_{B} \otimes I_{F})(I_{\widetilde{X}} \otimes W^{-1})(\widetilde{y} \otimes e_{1} \otimes v) & \widetilde{W}(\widetilde{y} \otimes e_{1}) \\
0 & 0
\end{matrix}
\right),
\end{eqnarray*}
where $\widetilde{W}:\widetilde{X} \overbar{\otimes} _{A}E \to F \overbar{\otimes} _{B} \widetilde{X}$ is the isomorphism defined in \cite[Section 2, pg 116]{Muhly2000a}. By \cite[Proposition 2.6]{Muhly2000a}, $\varphi_{Z}$ is a $*$-homomorphism, 
and since $W$ is $w^{*}$-continuous (being a $W^{*}$-correspondence isomorphism), $\varphi_{Z}$ is normal. Thus  $Z$ is a  $W^{*}$-correspondence over $L$ (where the left action of $L$ on $Z$ is given by $\varphi_{Z}$).

Replacing the $C^{*}$-module interior tensor product (which is the same as the module Haagerup tensor product) with the $W^{*}$-module tensor product, and replacing the direct sum with the ultraweak direct sum in \cite[Lemmas 2.7, 2.8 and 2.10]{Muhly2000a}, 
we have that the Fock space $\ms{F}(Z)$ can be written in the form
\begin{align*}
\mathscr{F}(Z)\cong
\left( 
\begin{matrix} 
\mathscr{F}(F) & \mathscr{F}(F) \overbar{\otimes}_{B}  \widetilde{X} \\
\mathscr{F}(E)\overbar{\otimes}_{A} X & \mathscr{F}(E)
\end{matrix}
\right).
\end{align*}

Form the operator algebras $\mathcal{T}(Z),\mathcal{T}_{+}(Z)$ and $H^{\infty}(Z)$ associated with $Z$. Consider the submodule
\begin{align*}
\ms{F}'(Z)=
\left( 
\begin{matrix} 
\ms{F}(F) & 0 \\
\ms{F}(E)\overbar{\otimes}_{A}X & 0
\end{matrix}
\right) \subset \ms{F}(Z).
\end{align*}

By the definition of the map $\varphi_{Z}$,  $\ms{F}'(Z)$ is invariant for the diagonal operators in $\ms{L}(\ms{F}(Z))$. By \cite[Lemma 2.9]{Muhly2000a},  $\ms{F}'(Z)$ is also invariant for the creation operators in $\ms{L}(\ms{F}(Z))$. Thus, $\ms{F}'(Z)$ is invariant for $H^{\infty}(Z)$. 
Furthermore, by \cite[Lemma 3.1]{Muhly2000a}, the representation of $\mathcal{T}(Z)$ obtained by restricting the action of $\mathcal{T}(Z)$ to $\ms{F}'(Z)$ 
is faithful. That is, we can study the action of $\mathcal{T}(Z)$ on $\ms{F}(Z)$ by just studying
  the action of $\mathcal{T}(Z)$ on $\ms{F}'(Z)$.

Write $p$ for the projection in $\ms{L}(\ms{F}'(Z))$ onto $
\left( 
\begin{matrix} 
\ms{F}(F)  \\
0
\end{matrix}
\right) $ and $q$ for the projection onto  $
\left( 
\begin{matrix} 
0  \\
\ms{F}(E)\overbar{\otimes} _{A}X
\end{matrix}
\right) $.
Next, we show that $pH^{\infty}(Z)p\cong H^{\infty}(F)$ and $qH^{\infty}(Z)q\cong H^{\infty}(E)$. Let 
\begin{align*}
\xi=
\left( 
\begin{matrix} 
h_{1} & h_{2}\otimes \widetilde{w} \\
k_{1} \otimes v & k_{2}
\end{matrix}
\right) \in Z^{\otimes m}
\end{align*}
and let $f \in F^{\otimes l}$. We can view $f$ as the element 
$\left( 
\begin{matrix} 
f \\
0
\end{matrix}
\right) \in 
\left( 
\begin{matrix} 
F^{\otimes l}  \\
0
\end{matrix}
\right) \subset \ms{F}'(Z)$. By \cite[Lemma 2.9]{Muhly2000a},
\begin{align*}
T_{\xi}p
\left( 
\begin{matrix} 
f \\
0
\end{matrix}
\right)= 
T_{\xi}
\left( 
\begin{matrix} 
f \\
0
\end{matrix}
\right)= 
\xi \otimes
\left( 
\begin{matrix} 
f & 0 \\
0 & 0
\end{matrix}
\right)=
\left( 
\begin{matrix} 
h_{1} \otimes f & 0 \\
k_{1} \otimes W_{l}(v\otimes f) & 0
\end{matrix}
\right),
\end{align*}
where $W_{k}=(I_{E} \otimes W_{k-1})(W_{1}\otimes I_{F^{\otimes(k-1)}})$. 
 Hence
\begin{align*}
pT_{\xi}p
\left( 
\begin{matrix} 
f \\
0
\end{matrix}
\right)= 
\left( 
\begin{matrix} 
h_{1} \otimes f \\
0
\end{matrix}
\right),
\end{align*}
which we write as $pT_{\xi}p=T_{h_{1}}$. For
\begin{align*}
\lambda= 
\left( 
\begin{matrix} 
b & \widetilde{y} \\
x & a
\end{matrix}
\right) \in L,
\end{align*}
we have 
\begin{align*}
\varphi _{\infty}(\lambda) p
\left( 
\begin{matrix} 
f & 0 \\
0 & 0
\end{matrix}
\right)=
\varphi _{\infty}(\lambda) 
\left( 
\begin{matrix} 
f & 0 \\
0 & 0
\end{matrix}
\right)=
\left( 
\begin{matrix} 
\varphi _{F^{\otimes l}}(b)f & 0 \\
W_{l}(x \otimes f) & 0
\end{matrix}
\right).
\end{align*}
Hence
\begin{align*}
p\varphi _{\infty}(\lambda) p
\left( 
\begin{matrix} 
f & 0 \\
0 & 0
\end{matrix}
\right)=
\left( 
\begin{matrix} 
\varphi _{F^{\otimes l}}(b)f & 0 \\
0 & 0
\end{matrix}
\right),
\end{align*}
So $p\varphi _{\infty}(\lambda)p=\varphi_{\infty}(b)$. That is, the generators of the algebra $p\mathcal{T}_{+}(Z)p$ are identified with the generators of $\mathcal{T}_{+}(F)$. Thus $p\mathcal{T}_{+}(Z)p\cong \mathcal{T}_{+}(F)$ and  $pH^{\infty}(Z)p\cong H^{\infty}(F)$.

Similarly, we can view the element $e \otimes u \in E^{\otimes l} \otimes X$ as the element $
\left( 
\begin{matrix} 
0 & 0 \\
e \otimes u & 0
\end{matrix}
\right) \in \ms{F}'(Z)$. So by \cite[Lemma 2.9]{Muhly2000a},
\begin{align*}
T_{\xi}q
\left( 
\begin{matrix} 
0 & 0 \\
e \otimes u & 0
\end{matrix}
\right)=
T_{\xi}
\left( 
\begin{matrix} 
0 & 0 \\
e \otimes u & 0
\end{matrix}
\right)=
\xi \otimes
\left( 
\begin{matrix} 
0 & 0 \\
e \otimes u & 0
\end{matrix}
\right)=
\left( 
\begin{matrix} 
c(h_{2}\otimes \widetilde{w}, W_{l}^{-1}(e\otimes u)) & 0 \\
k_{2}\otimes e\otimes u & 0
\end{matrix}
\right),
\end{align*} where $c:F^{\otimes m} \otimes \widetilde{X} \times X \otimes F^{\otimes l} \to F^{\otimes m} \otimes F^{\otimes l}$ is a bilinear map which is not relevant for our purposes.
So
\begin{align*}
qT_{\xi}q
\left( 
\begin{matrix} 
0 & 0 \\
e \otimes u & 0
\end{matrix}
\right)=
\left( 
\begin{matrix} 
0 & 0 \\
k_{2}\otimes e\otimes u & 0
\end{matrix}
\right),
\end{align*}
which we write as $qT_{\xi}q=T_{k_{2}} \otimes I_{x}$. For
\begin{align*}
\lambda= 
\left( 
\begin{matrix} 
b & \widetilde{y} \\
x & a
\end{matrix}
\right) \in L,
\end{align*}
we have 
\begin{align*}
\varphi _{\infty}(\lambda)
\left( 
\begin{matrix} 
0 & 0 \\
e \otimes u & 0
\end{matrix}
\right)=
\left( 
\begin{matrix} 
(m_{B} \otimes I_{F^{\otimes l}}(I_{\widetilde{X}} \otimes W^{-1})(\widetilde{y}\otimes e\otimes u) & 0 \\
\varphi_{E^{\otimes l}}(a)\otimes e\otimes u & 0
\end{matrix}
\right).
\end{align*}
Hence
\begin{align*}
q\varphi _{\infty}(\lambda)q
\left( 
\begin{matrix} 
0 & 0 \\
e \otimes u & 0
\end{matrix}
\right)=
\left( 
\begin{matrix} 
0 & 0 \\
\varphi_{E^{\otimes l}}(a)\otimes e\otimes u & 0
\end{matrix}
\right).
\end{align*}
So $q\varphi _{\infty}(\lambda)q=\varphi_{\infty}(a)\otimes I_{X}$. Since the map from $\ms{L}(\ms{F}(E)\overbar{\otimes} _{A}X)$ to $\ms{L}(\ms{F}(E))$ taking $T \otimes I_{X}$ to $T$ is an isomorphism (\cite[Lemma 2.12]{Muhly2000a}), we have that $q\mathcal{T}_{+}(Z)q\cong \mathcal{T}_{+}(E)$ and  $qH^{\infty}(Z)q\cong H^{\infty}(E)$.

Next, we show that $H^{\infty} (E)$ and $H^{\infty} (F)$ are weakly Morita equivalent (as dual operator algebras) in the sense of \cite{Blecher2008a}. First note that $pH^{\infty}(Z)p$ and $qH^{\infty}(Z)q$ are unital dual operator algebras with identities $p\varphi_{\infty}(1_{L})p= \varphi_{\infty}(1_{B}) $ and $q\varphi_{\infty}(1_{L})q=\varphi_{\infty}(1_{A})$ respectively.

Let $(\cdot,\cdot):pH^{\infty}(Z)q \times qH^{\infty}(Z)p \longrightarrow pH^{\infty}(Z)p$ and $[\cdot,\cdot]:qH^{\infty}(Z)p\times pH^{\infty}(Z)q$ $\longrightarrow qH^{\infty}(Z)q$  be given by:                               
\begin{align*}
(p\alpha q,q\beta p) \longrightarrow p\alpha q\beta p \\
[q\alpha p,p\beta q] \longrightarrow q\alpha p\beta q
\end{align*}
respectively, and let $(\cdot,\cdot)_{t}:p\mathcal{T}_{+}(Z)q$ $\times$ $q\mathcal{T}_{+}(Z)p \longrightarrow p\mathcal{T}_{+}(Z)p$ and $[\cdot,\cdot]_{t}:q\mathcal{T}_{+}(Z)p$ $\times$ $p\mathcal{T}_{+}(Z)q \longrightarrow q\mathcal{T}_{+}(Z)q$ be the respective restrictions of $(\cdot,\cdot)$ and $[\cdot,\cdot]$.

As shown in \cite[Theorem 3.2 (3)]{Muhly2000a},
\begin{align*}
(p\mathcal{T}_{+}(Z)p,q\mathcal{T}_{+}(Z)q,p\mathcal{T}_{+}(Z)q,q\mathcal{T}_{+}(Z)p,(\cdot,\cdot)_{t},[\cdot,\cdot]_{t})
\end{align*}
is a strong Morita context in the sense of \cite[Definition 3.1]{Blecher2000}. In particular, the multiplication maps $(\cdot,\cdot)$ and $[\cdot,\cdot]$ are completely contractive bilinear maps.
 Since for any Hilbert space $H$, the product in $B(H)$ is separately weak$^{*}$-continuous, we have that $(\cdot,\cdot)$ and $[\cdot,\cdot]$ are separately weak$^{*}$-continuous (here we are using the identification of $H^{\infty}(Z)$, an abstract dual operator algebra, with a subalgebra of B(H) via a complete isometric homomorphism which is a $w^{*}$-homeomorphism). 

Then, since $\overline{p\mathcal{T}_{+}(Z)p}^{w^{*}}=pH^{\infty}(Z)p$ , $\overline{q\mathcal{T}_{+}(Z)q}^{w^{*}}=qH^{\infty}(Z)q$, $\overline{p\mathcal{T}_{+}(Z)q}^{w^{*}}=pH^{\infty}(Z)q$ and $\overline{q\mathcal{T}_{+}(Z)p}^{w^{*}}=qH^{\infty}(Z)p$, we have that 
\begin{align*}
(pH^{\infty}(Z)p,qH^{\infty}q,pH^{\infty}(Z)q,qH^{\infty}(Z)p,(\cdot,\cdot),[\cdot,\cdot])
\end{align*}
is a weak Morita context. Thus $H^{\infty}(E) \cong qH^{\infty}(Z)q$ and $H^{\infty}(F)\cong pH^{\infty}(Z)p$ are weakly Morita equivalent in the sense of \cite[Definition 3.2]{Blecher2008a}. By \cite[Corollary 3.4]{Blecher2008a},  $H^{\infty}(E)$ and $H^{\infty}(F)$ are also weak$^{*}$ Morita equivalent.
\end{proof}
In the same way that a strong Morita context gives rise to a linking algebra $\mathcal{L}$ (see \cite[Chapter 3]{Blecher2000}), a weak$^{*}$  Morita context (and therefore a weak Morita context also) gives rise to a weak linking algebra $\mathcal{L}^{\omega}$, which is a dual operator algebra. The construction of $\mathcal{L}^{\omega}$ is given in \cite[Section 4]{Blecher2008a}. In our case, the linking operator algebra of the strong Morita context $(\mathcal{T}_{+}(F),\mathcal{T}_{+}(E),p\mathcal{T}_{+}(Z)q,q\mathcal{T}_{+}(Z)p)$ is 
\begin{equation*}
\mathcal{L}=
\left( 
\begin{matrix} 
\mathcal{T}_{+}(F) & p\mathcal{T}_{+}(Z)q \\
q\mathcal{T}_{+}(Z)p & \mathcal{T}_{+}(E)
\end{matrix}
\right)
\end{equation*}
and the weak linking algebra of the weak Morita context $(H^{\infty}(F),H^{\infty}(E),pH^{\infty}(Z)q,$ $qH^{\infty}(Z)p)$ is
\begin{equation*}
\mathcal{L}^{\omega}=
\left( 
\begin{matrix} 
H^{\infty}(F) & pH^{\infty}(Z)q \\
qH^{\infty}(Z)p & H^{\infty}(E)
\end{matrix}
\right).
\end{equation*} 
$\mathcal{L}$ can be identified completely isometrically with a weak$^{*}$-dense subalgebra of $\mathcal{L}^{\omega}$. 
Adapting \cite[4]{Blecher2008a} to our algebras, we have that $(H^{\infty}(F),\mathcal{L}^{\omega},H^{\infty}(F)\oplus^{r}pH^{\infty}(Z)q,$ $H^{\infty}(F)\oplus^{c}qH^{\infty}(Z)p)$ and $(H^{\infty}(E),\mathcal{L}^{\omega},H^{\infty}(E)\oplus^{r}qH^{\infty}(Z)p,H^{\infty}(E)\oplus^{c}pH^{\infty}(Z)q)$ are weak$^{*}$ Morita contexts. The next corollary follows.
\begin{corollary}
If two $W^{*}$-correspondences $(E,A)$ and $(F,B)$ are weakly Morita equivalent, then $H^{\infty}(F)$ and $H^{\infty}(E)$ are weakly Morita equivalent to $\mathcal{L}^{\omega}$.
\end{corollary}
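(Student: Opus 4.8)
The plan is to exploit the standard principle — valid already in purely algebraic Morita theory, carried to operator algebras in \cite{Blecher2000} and to the dual setting in \cite[Section 4]{Blecher2008a} — that any unital (dual) operator algebra realized as a corner of a (weak) linking algebra is Morita equivalent to the whole linking algebra. Concretely, I would write $1_{\mathcal{L}^{\omega}}=e+e'$ for the two diagonal idempotents
\[
e=\left(\begin{matrix}1 & 0 \\ 0 & 0\end{matrix}\right), \qquad e'=\left(\begin{matrix}0 & 0 \\ 0 & 1\end{matrix}\right),
\]
where the entries are the identities of the corners, so that $e\mathcal{L}^{\omega}e\cong H^{\infty}(F)$ and $e'\mathcal{L}^{\omega}e'\cong H^{\infty}(E)$, while the off-diagonal corners $e\mathcal{L}^{\omega}e'=pH^{\infty}(Z)q$ and $e'\mathcal{L}^{\omega}e=qH^{\infty}(Z)p$ are exactly the bimodules appearing in the weak Morita context of \Cref{Moritahardy}.

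For the equivalence of $H^{\infty}(F)$ with $\mathcal{L}^{\omega}$ I would take the row module $X=e\mathcal{L}^{\omega}=H^{\infty}(F)\oplus^{r}pH^{\infty}(Z)q$ as an $H^{\infty}(F)$-$\mathcal{L}^{\omega}$-bimodule and the column module $Y=\mathcal{L}^{\omega}e=H^{\infty}(F)\oplus^{c}qH^{\infty}(Z)p$ as a $\mathcal{L}^{\omega}$-$H^{\infty}(F)$-bimodule, with both pairings $(\cdot,\cdot):X\times Y\to e\mathcal{L}^{\omega}e=H^{\infty}(F)$ and $[\cdot,\cdot]:Y\times X\to\mathcal{L}^{\omega}$ given by the multiplication of $\mathcal{L}^{\omega}$. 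These pairings are separately weak$^{*}$-continuous and completely contractive for the same reason as in the proof of \Cref{Moritahardy} — the product in $B(H)$ is separately weak$^{*}$-continuous under a completely isometric normal representation of $\mathcal{L}^{\omega}$. Because $\mathcal{L}^{\omega}$ is unital, the first bimodule isomorphism $X\overbar{\otimes}_{\mathcal{L}^{\omega}}Y\cong H^{\infty}(F)$ is immediate from $e\mathcal{L}^{\omega}\cdot\mathcal{L}^{\omega}e=e\mathcal{L}^{\omega}e$, so the whole content sits in the second isomorphism $Y\overbar{\otimes}_{H^{\infty}(F)}X\cong\mathcal{L}^{\omega}$, which is the assertion that $e$ is a \emph{full} idempotent, i.e. that $\mathcal{L}^{\omega}e\mathcal{L}^{\omega}$ is weak$^{*}$-dense in $\mathcal{L}^{\omega}$.

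The main obstacle is precisely this fullness, and I expect to extract it from \Cref{Moritahardy} rather than verify it by hand. The $e\mathcal{L}^{\omega}e$ and $e'\mathcal{L}^{\omega}e$ blocks are recovered automatically; the only nontrivial block is $e'\mathcal{L}^{\omega}e'=H^{\infty}(E)$, which is spanned weak$^{*}$-densely by products $qH^{\infty}(Z)p\cdot pH^{\infty}(Z)q$ exactly because $[\cdot,\cdot]$ in the weak Morita context of \Cref{Moritahardy} is the weak$^{*}$-closure of a complete quotient map onto $H^{\infty}(E)$ at the level of the dense subalgebras $q\mathcal{T}_{+}(Z)p$ and $p\mathcal{T}_{+}(Z)q$. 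Granting this, I would produce the required dense strong Morita subcontext from the norm-closed linking algebra $\mathcal{L}$, which sits completely isometrically and weak$^{*}$-densely in $\mathcal{L}^{\omega}$: taking $\mathcal{T}_{+}(F)$, $\mathcal{L}$, and the corner submodules $\mathcal{T}_{+}(F)\oplus^{r}p\mathcal{T}_{+}(Z)q$ and $\mathcal{T}_{+}(F)\oplus^{c}q\mathcal{T}_{+}(Z)p$, the fact that a unital operator algebra is strongly Morita equivalent to its linking algebra \cite{Blecher2000} shows these form a strong Morita context. Hence $(H^{\infty}(F),\mathcal{L}^{\omega},X,Y,(\cdot,\cdot),[\cdot,\cdot])$ is a weak Morita context, so $H^{\infty}(F)$ and $\mathcal{L}^{\omega}$ are weakly Morita equivalent, and weak$^{*}$ Morita equivalence follows by \cite[Corollary 3.4]{Blecher2008a} as in \Cref{Moritahardy}. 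Interchanging $e$ and $e'$ gives the same conclusion for $H^{\infty}(E)$.
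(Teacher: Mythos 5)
Your proposal is correct and follows essentially the same route as the paper, which obtains the corollary by adapting \cite[Section 4]{Blecher2008a}: the row and column corner modules $H^{\infty}(F)\oplus^{r}pH^{\infty}(Z)q$ and $H^{\infty}(F)\oplus^{c}qH^{\infty}(Z)p$ (and their $H^{\infty}(E)$ counterparts) form weak$^{*}$/weak Morita contexts with $\mathcal{L}^{\omega}$, with the dense strong subcontext supplied by $\mathcal{T}_{+}(F)$, $\mathcal{L}$ and the corresponding corners of $\mathcal{L}$. You simply make explicit the one point the paper leaves to the citation, namely the weak$^{*}$-fullness of the diagonal idempotent, which as you note is exactly the surjectivity of the pairing in the context of \Cref{Moritahardy}.
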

Furthermore, applying the map $L_{N}$ in \cite[Theorem 3.6]{Blecher2008a} to the weak linking algebras of the weak$^{*}$ Morita contexts $(H^{\infty}(F),\mathcal{L}^{\omega},H^{\infty}(F)\oplus^{r}pH^{\infty}(Z)q,H^{\infty}(F)\oplus^{c}qH^{\infty}(Z)p)$ and $(H^{\infty}(E),\mathcal{L}^{\omega},H^{\infty}(E)\oplus^{r}qH^{\infty}(Z)p,H^{\infty}(E)\oplus^{c}pH^{\infty}(Z)q)$, we have:
\begin{corollary}
If two $W^{*}$-correspondences $(E,A)$ and $(F,B)$ are weakly Morita equivalent, 
then $w^{*}CB_{H^{\infty}(F)}(H^{\infty}(F)\oplus^{c}qH^{\infty}(Z)p)\cong \mathcal{L}^{\omega}\cong w^{*}CB_{H^{\infty}(E)}(H^{\infty}(E)\oplus^{c}pH^{\infty}(Z)q)$ completely isometrically and $w^{*}$-isomorphically.
\end{corollary}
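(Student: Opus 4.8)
The plan is to read off both isomorphisms from a single application of \cite[Theorem 3.6]{Blecher2008a} to each of the two weak$^{*}$ Morita contexts displayed just above, and then to paste the two identifications together along their common middle term $\mathcal{L}^{\omega}$. Recall the content of that theorem: given a weak$^{*}$ Morita context $(\mathcal{A},\mathcal{B},\mathcal{X},\mathcal{Y})$ in which $\mathcal{Y}$ is the column $\mathcal{B}$-$\mathcal{A}$-bimodule, the map $L_{N}$ realizes the left action of $\mathcal{B}$ on $\mathcal{Y}$, and \cite[Theorem 3.6]{Blecher2008a} asserts that $L_{N}$ is a completely isometric $w^{*}$-homeomorphism of $\mathcal{B}$ onto $w^{*}CB_{\mathcal{A}}(\mathcal{Y})$.

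First I would record that the two tuples above are genuine weak$^{*}$ Morita contexts. This is the self-linking situation obtained by adapting \cite[Section 4]{Blecher2008a}: starting from the weak Morita context for the corners built in the proof of Theorem \ref{Moritahardy} and its weak linking algebra $\mathcal{L}^{\omega}$, one checks that $(H^{\infty}(F),\mathcal{L}^{\omega},H^{\infty}(F)\oplus^{r}pH^{\infty}(Z)q,H^{\infty}(F)\oplus^{c}qH^{\infty}(Z)p)$ and $(H^{\infty}(E),\mathcal{L}^{\omega},H^{\infty}(E)\oplus^{r}qH^{\infty}(Z)p,H^{\infty}(E)\oplus^{c}pH^{\infty}(Z)q)$ are weak$^{*}$ Morita contexts, the corners $H^{\infty}(F)$ and $H^{\infty}(E)$ being unital dual operator algebras and the off-diagonal pieces $pH^{\infty}(Z)q$, $qH^{\infty}(Z)p$ being dual operator bimodules with separately $w^{*}$-continuous actions inherited from $H^{\infty}(Z)\subseteq \ms{L}(\ms{F}(Z))$.

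Next I would apply $L_{N}$ once to each context. For the first context the role of $\mathcal{A}$ is played by $H^{\infty}(F)$, the role of $\mathcal{B}$ by $\mathcal{L}^{\omega}$, and the column $\mathcal{Y}$ is $H^{\infty}(F)\oplus^{c}qH^{\infty}(Z)p$; thus $L_{N}$ gives $\mathcal{L}^{\omega}\cong w^{*}CB_{H^{\infty}(F)}(H^{\infty}(F)\oplus^{c}qH^{\infty}(Z)p)$ completely isometrically and $w^{*}$-isomorphically. For the second context $\mathcal{A}=H^{\infty}(E)$, $\mathcal{B}=\mathcal{L}^{\omega}$ and the column is $H^{\infty}(E)\oplus^{c}pH^{\infty}(Z)q$, so $L_{N}$ gives $\mathcal{L}^{\omega}\cong w^{*}CB_{H^{\infty}(E)}(H^{\infty}(E)\oplus^{c}pH^{\infty}(Z)q)$ with the same properties. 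Composing the first isomorphism with the inverse of the second through $\mathcal{L}^{\omega}$ produces the asserted chain, and the composite remains completely isometric and a $w^{*}$-homeomorphism because each factor is.

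The step that carries the real weight---and the only possible obstacle---is verifying that the hypotheses of \cite[Theorem 3.6]{Blecher2008a} genuinely hold for these self-linking contexts, rather than the gluing, which is mere transitivity. Concretely one must confirm that each corner is unital (we already recorded the units $q\varphi_{\infty}(1_{L})q=\varphi_{\infty}(1_{A})$ and $p\varphi_{\infty}(1_{L})p=\varphi_{\infty}(1_{B})$ in the proof of Theorem \ref{Moritahardy}), that each of the two displayed tuples satisfies the axioms of a weak$^{*}$ Morita context in the sense of \cite[Section 3]{Blecher2008a}, and that $L_{N}$ is $w^{*}$-continuous with $w^{*}$-continuous inverse; the identification of $\mathcal{L}^{\omega}$ with a $w^{*}CB$-algebra over each corner then follows at once from the theorem, and the corollary is immediate.
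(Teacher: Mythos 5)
Your proposal is correct and follows essentially the same route as the paper: the paper likewise obtains the corollary by applying the map $L_{N}$ of \cite[Theorem 3.6]{Blecher2008a} to the weak linking algebras of the two weak$^{*}$ Morita contexts $(H^{\infty}(F),\mathcal{L}^{\omega},H^{\infty}(F)\oplus^{r}pH^{\infty}(Z)q,H^{\infty}(F)\oplus^{c}qH^{\infty}(Z)p)$ and $(H^{\infty}(E),\mathcal{L}^{\omega},H^{\infty}(E)\oplus^{r}qH^{\infty}(Z)p,H^{\infty}(E)\oplus^{c}pH^{\infty}(Z)q)$, identifying $\mathcal{L}^{\omega}$ with each $w^{*}CB$-algebra and chaining the two identifications. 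Your additional care in checking the hypotheses of the cited theorem is consistent with, and slightly more explicit than, the paper's treatment.
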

In \cite{Blecher1999a}, it was shown that a strong Morita equivalence of operator algebras gives a subcontext of a strong Morita equivalence (in the sense of Rieffel) of containing $C^{*}$-algebras.
 In \cite{Blecher2008a} and \cite{Kashyap2008}, Blecher and Kashyap, presented an extension of all this theory to the setting of dual operator algebras and weak$^{*}$ Morita equivalence. 
Basically, if $(A,B,X,Y)$ is a weak$^{*}$ Morita context of dual operator algebras $A$ and $B$, then any $W^{*}$-algebra $C$ generated by $A$ induces a $W^{*}$-algebra $D$ generated by $B$, such that $C$ and $D$ are Morita equivalent, via the $W^{*}$-equivalence bimodule $C\otimes^{\sigma h}_{A}X$.
  Like in the norm-closed case, we also have that the correspondence $C\mapsto \mathcal{F}(C)= Y\otimes^{\sigma h}_{A}C \otimes^{\sigma h}_{A}X$ taking $W^{*}$-algebras generated by $A$ to  $W^{*}$-algebras generated by $B$ is bijective and order preserving (\cite[Theorems 5.3.5 and 5.3.6]{Kashyap2008}).


Let $W^{*}_{max}(A)$ denote the maximal $W^{*}$-algebra of a dual operator algebra $A$, as defined in \cite[section 4]{Blecher2004f}. Using $Z = 
\left( 
\begin{matrix} 
F & F\overbar{\otimes}_{B} \widetilde{X} \\
E \overbar{\otimes}_{A} X & E
\end{matrix}
\right)$, as in the proof of \cref{Moritahardy}, we have the following:
\begin{corollary}
If two $W^{*}$-correspondences $(E,A)$ and $(F,B)$ are weakly Morita equivalent, and $C$ is a $W^{*}$-algebra generated by $H^{\infty}(E)$, then
\begin{itemize}
\item[(1)] $pH^{\infty}(Z)q\overbar{\otimes}_{H^{\infty}(E)}C\overbar{\otimes}_{H^{\infty}(E)}qH^{\infty}(Z)p$  is a $W^{*}$-algebra generated by $H^{\infty}(F)$, which is Morita equivalent to $C$ via the equivalence bimodule $pH^{\infty}(Z)q\overbar{\otimes}_{H^{\infty}(E)}C$. In particular, $pH^{\infty}(Z)q\overbar{\otimes}_{H^{\infty}(E)}W^{*}_{max}(H^{\infty}(E)) \overbar{\otimes}_{H^{\infty}(E)}qH^{\infty}(Z)p=W^{*}_{max}(H^{\infty}(F)) $ is Morita equivalent to $W^{*}_{max}(H^{\infty}(E))$  via the equivalence bimodule $W^{*}_{max}(H^{\infty}(E))$ $\overbar{\otimes}_{H^{\infty}(E)}qH^{\infty}(Z)p$.
\item[(2)]$W^{*}_{max}(H^{\infty}(E))$ and $W^{*}_{max}(H^{\infty}(F))$ are Morita equivalent $W^{*}$-algebras.
\item[(3)] $pH^{\infty}(Z)q\overbar{\otimes}_{H^{\infty}(E)}C\cong \mathcal{F}(C)\overbar{\otimes}_{H^{\infty}(E)}pH^{\infty}(Z)q$. In particular, $pH^{\infty}(Z)q\overbar{\otimes}_{H^{\infty}(E)}$ $W^{*}_{max}(H^{\infty}(E))\cong W^{*}_{max}(H^{\infty}(F)) \overbar{\otimes}_{H^{\infty}(E)}pH^{\infty}(Z)q$.
\end{itemize}
\end{corollary}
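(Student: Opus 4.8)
The plan is to recognize the statement as a direct application of the Blecher--Kashyap correspondence between the $W^{*}$-algebras generated by two weak$^{*}$ Morita equivalent dual operator algebras, with the context supplied by \cref{Moritahardy}. That theorem exhibits $(H^{\infty}(E),H^{\infty}(F),qH^{\infty}(Z)p,pH^{\infty}(Z)q)$ as a weak$^{*}$ Morita context (taking $A=H^{\infty}(E)$, $B=H^{\infty}(F)$): here $qH^{\infty}(Z)p$ is an $H^{\infty}(E)$--$H^{\infty}(F)$ bimodule, $pH^{\infty}(Z)q$ is an $H^{\infty}(F)$--$H^{\infty}(E)$ bimodule, and the defining isomorphisms are $qH^{\infty}(Z)p\,\overbar{\otimes}_{H^{\infty}(F)}\,pH^{\infty}(Z)q\cong H^{\infty}(E)$ and $pH^{\infty}(Z)q\,\overbar{\otimes}_{H^{\infty}(E)}\,qH^{\infty}(Z)p\cong H^{\infty}(F)$. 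Before invoking the machinery I would record that for these dual operator bimodules the $W^{*}$-module tensor product $\overbar{\otimes}$ coincides with the normal Haagerup tensor product $\otimes^{\sigma h}$ of \cite{Kashyap2008}, so that the functor $C\mapsto \mathcal{F}(C)=pH^{\infty}(Z)q\,\overbar{\otimes}_{H^{\infty}(E)}\,C\,\overbar{\otimes}_{H^{\infty}(E)}\,qH^{\infty}(Z)p$ is precisely the order-preserving bijection of \cite[Theorems 5.3.5 and 5.3.6]{Kashyap2008}.

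For (1), I would quote \cite[Theorem 5.3.5]{Kashyap2008} directly: a $W^{*}$-algebra $C$ generated by $H^{\infty}(E)$ is sent to a $W^{*}$-algebra $\mathcal{F}(C)$ generated by $H^{\infty}(F)$, and $C$ and $\mathcal{F}(C)$ are Morita equivalent via the equivalence bimodule $pH^{\infty}(Z)q\,\overbar{\otimes}_{H^{\infty}(E)}\,C$. For the ``in particular'' clause I would set $C=W^{*}_{max}(H^{\infty}(E))$ and use that $\mathcal{F}$ is an order isomorphism of the posets of $W^{*}$-algebras generated by $H^{\infty}(E)$ and by $H^{\infty}(F)$ respectively \cite[Theorem 5.3.6]{Kashyap2008}; since $W^{*}_{max}$ is by definition the largest such $W^{*}$-algebra \cite[Section 4]{Blecher2004f}, $\mathcal{F}$ carries the largest element to the largest element, giving $\mathcal{F}(W^{*}_{max}(H^{\infty}(E)))=W^{*}_{max}(H^{\infty}(F))$. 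Statement (2) is then just the Morita equivalence of $C$ and $\mathcal{F}(C)$ from (1) read off at $C=W^{*}_{max}(H^{\infty}(E))$.

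For (3) I would argue formally from associativity of $\overbar{\otimes}$ and the two context isomorphisms. Starting from the right-hand side (with the balancing over $H^{\infty}(F)$ forced by the module structures) and regrouping, one has
\begin{align*}
\mathcal{F}(C)\,\overbar{\otimes}_{H^{\infty}(F)}\,pH^{\infty}(Z)q
&\cong pH^{\infty}(Z)q\,\overbar{\otimes}_{H^{\infty}(E)}\,C\,\overbar{\otimes}_{H^{\infty}(E)}\bigl(qH^{\infty}(Z)p\,\overbar{\otimes}_{H^{\infty}(F)}\,pH^{\infty}(Z)q\bigr)\\
&\cong pH^{\infty}(Z)q\,\overbar{\otimes}_{H^{\infty}(E)}\,C\,\overbar{\otimes}_{H^{\infty}(E)}\,H^{\infty}(E)\\
&\cong pH^{\infty}(Z)q\,\overbar{\otimes}_{H^{\infty}(E)}\,C,
\end{align*}
where the second isomorphism uses the context relation $qH^{\infty}(Z)p\,\overbar{\otimes}_{H^{\infty}(F)}\,pH^{\infty}(Z)q\cong H^{\infty}(E)$ and the third is the unit law for $\overbar{\otimes}_{H^{\infty}(E)}$. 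The specialization to $C=W^{*}_{max}(H^{\infty}(E))$ then follows from (1).

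I expect the difficulty here to be verification rather than invention: the two genuinely checkable points are that the context of \cref{Moritahardy} meets the precise hypotheses of \cite[Theorems 5.3.5 and 5.3.6]{Kashyap2008} under the identification $\overbar{\otimes}=\otimes^{\sigma h}$, and that $\mathcal{F}$ sends the maximal generated $W^{*}$-algebra to the maximal generated $W^{*}$-algebra, which rests on the order-preservation of $\mathcal{F}$ together with the characterization of $W^{*}_{max}$ in \cite{Blecher2004f}.
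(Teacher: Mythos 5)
Your proposal is correct and, for part (1), coincides with the paper's own proof: both deduce it from \cref{Moritahardy} together with \cite[Theorem 5.3.5]{Kashyap2008}. For parts (2) and (3) you take a different route from the paper, which simply cites \cite[Theorem 5.2]{Blecher2008a} — a theorem that states outright that weak$^{*}$ Morita equivalent dual operator algebras have Morita equivalent maximal generated $W^{*}$-algebras and supplies the commutation isomorphism of (3). You instead re-derive these facts: (2) from the order-isomorphism property of $\mathcal{F}$ in \cite[Theorem 5.3.6]{Kashyap2008} together with the characterization of $W^{*}_{max}$ as the top element of the poset of $W^{*}$-algebras generated by $H^{\infty}(E)$, and (3) from associativity of $\overbar{\otimes}$, the context isomorphism $qH^{\infty}(Z)p\,\overbar{\otimes}_{H^{\infty}(F)}\,pH^{\infty}(Z)q\cong H^{\infty}(E)$, and the unit law. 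Both routes are valid; yours is more self-contained and makes explicit which structural facts are being used (it also correctly flags that the balancing on the right-hand side of (3) must be over $H^{\infty}(F)$, and that $\overbar{\otimes}$ must be identified with $\otimes^{\sigma h}$ before Kashyap's theorems apply — points the paper leaves implicit), while the paper's proof is shorter by direct citation. The one small point you should make explicit in (2) is that the inverse of $\mathcal{F}$ is the functor of the same form built from the opposite bimodule, hence also order-preserving; this is what upgrades the order-preserving bijection of \cite[Theorem 5.3.6]{Kashyap2008} to an order isomorphism carrying the largest element to the largest element.
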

\begin{proof}
(1) folows directly from \Cref{Moritahardy} and \cite[Theorem 5.3.5]{Kashyap2008}. Parts (2) and (3) follow from \cref{Moritahardy} and \cite[Theorem 5.2]{Blecher2008a}.
\end{proof}

\section{$W^{*}$-Graph Correspondences} \label{graphChap}
A \emph{directed graph} $G=(G^{0}, G^{1},r,s)$ consists of two countable sets $G^{0}, G^{1}$ and functions $r,s:G^{0}\rightarrow G^{1}$ identifying the range and source of each edge. 
The $W^{*}$-correspondence $(E,A)$ associated to $G=(G^{0},G^{1}, r, s)$ is given by:
\begin{center}
 $A= \ell^{\infty}_{|G^{0}|}  \qquad\qquad E=\{x:G^{1}\to \mathbb{C}\thinspace |\thinspace \sup\limits_{v\in G^{0}}\{\sum\limits_{s(e)=v}|x(e)|^{2}\}<\infty\} $
\end{center}
The left and right actions are given by $(a\cdot x\cdot b)(e)=a(r(e))x(e)b(s(e))$, where $a,b\in A$ and $x\in E$. The inner product is given by $\langle x,y\rangle_{A}(v)=\sum\limits_{s(e)=v}\overline{x(e)}y(e)$. This definition of a $W^{*}$-graph correspondence is equivalent to the one given by Solel in \cite[pg 3]{Solel2004a}, where the $W^{*}$-correspondence is defined in terms of matrices. Let $I,J$ be indexing sets with $|I|=|G^{0}|$ and $|J|=|G^{1}|$. We will write the elements of $A$ as $a=(a_{i})$ or as $a=\sum\limits_{i\in I}a_{i}\delta _{v_{i}}$, and the elements of $E$ as $(z_{j})$ or as $x=\sum\limits_{j\in J}z_{i}\delta _{e_{j}}$, ($\delta _{v_{i}}$ and $\delta _{e_{j}}$ denote the point masses of a vertex and an edge respectively). Note that $A$ is the $w^{*}$-closure of $ c_{0}(G^{0})$ (by $\ell^{\infty}= c_{0}^{**}$ and Goldstine's theorem). The norm of $A$ is given by $||a||= \sup\limits_{i\in I}|a_{i}|$. The norm of $E$ is given by $||x||=||\langle x, x \rangle_{A}||^{\frac{1}{2}}=(\sup\limits_{v\in G^{0}}\{\sum\limits_{s(e)=v}|x(e)|^{2}\})^{\frac{1}{2}}$. Note that $E$ is a subspace of $\ell^{\infty}_{|G^{1}|}$, which may also be viewed as a disjoint union $\bigsqcup\limits_{v\in G^{0}} \ell^{2}_{|s^{-1}(v)|}$.

For each vertex $v_{i}\in G^{0}$, we have $\delta _{v_{i}}^{2}=\delta _{v_{i}}=\delta _{v_{i}}^{*}$. That is, for each $i\in I$, $\delta _{v_{i}}$ is a projection. If $\sigma$ is a faithful normal representation of $A$ on a Hilbert space $H$, then $\sigma(\delta _{v_{i}})$ is also a projection. So for any $a=(a_{i})\in A$, we have $\sigma (a)=\bigoplus \limits_{i\in I}a_{i}I_{H_{i}}$, a direct sum of uniformly bounded operators on $H=\bigoplus \limits_{i\in I}H_{i}$, the Hilbert space direct sum. Then $\sigma (A)= \bigoplus^{\infty}_{i}\mathbb{C}_{i}I_{H_{i}}$, where $\bigoplus^{\infty}_{i}$ denotes the $\infty$-direct sum (for more information on this sum, see for example \cite[1.2.17]{Blecher2004d}). Since $\sigma$ is faithful, it is isometric ($||\sigma (a)||= \sup\limits_{i}a_{i}I_{H_{i}}=\sup\limits_{i}a_{i}=||a||)$, and so are its amplifications $\sigma _{n}$. So $\sigma$ is completely isometric. The dimension of each Hilbert space $H_{i}$ is the multiplicity $m_{i}$ of the one-dimensional representation $\sigma (\sum \limits^{|G^{0}|}_{i=1}a_{i}\delta _{v_{i}})=a_{i}$. So $\sigma$ is completely determined up to unitarily equivalence by the sequence $(m_{1}, m_{2}, \cdots )$ of these multiplicities. Since $\sigma$ is faithful, $0<m_{i}\leq \infty$. Thus $H$ can be written as $H=\bigoplus \limits_{i\in I}\mathbb{C} ^{m_{i}}$, where $\mathbb{C}^{\infty}$ is interpreted as $\ell^{2}$.

As stated in section \ref{preliminaries}, 
attached to each faithful normal representation $\sigma$ of $A$, there is a dual correspondence $E^{\sigma}$, which is a  $W^{*}$- correspondence over $\sigma(A)'$. Furthermore, 
the ultraweakly continuous representations of $H^{\infty}(E)$ are parametrized by the elements of $\mathbb{D}((E^{\sigma})^{*})$. In the following theorem, we identify the elements of $(E^{\sigma})^{*}$ when $(E,A)$ is a  $W^{*}$-graph correspondence.
\begin{theorem} \label{graphintertwiners}
If $(E,A)$ is a $W^{*}$-graph correspondence and $\sigma:A\to B(H)$ is a faithful normal representation of $A$, then  the elements of $(E^{\sigma})^{*}$ are block matrices $(T_{ij})$ where $T_{ij}\in B(H_{s(e_{j})},H_{v_{i}})$ and  $T_{ij}=0$ if $r(e_{j})\neq v_{i}$.
\end{theorem}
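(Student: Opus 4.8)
The plan is to use the concrete description of $(E^\sigma)^*$ as the intertwining space
\[
(E^\sigma)^* = \{\eta^* \in B(E\overbar{\otimes}_\sigma H, H) : \eta^*(\varphi(a)\otimes I) = \sigma(a)\eta^* \text{ for all } a \in A\},
\]
recorded in the Preliminaries, and to unwind the intertwining relation after fixing an orthogonal decomposition of the Hilbert space $E\overbar{\otimes}_\sigma H$ that is adapted to the edges of $G$.

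First I would pin down the structure of $E\overbar{\otimes}_\sigma H$. Using the edge point masses $\delta_{e_j}$, which span a $w^*$-dense subspace of $E$, together with the computation $\langle \delta_{e_j}, \delta_{e_k}\rangle_A = \delta_{jk}\,\delta_{s(e_j)}$ for the $A$-valued inner product, the defining inner product on the tensor product gives $\langle \delta_{e_j}\otimes h, \delta_{e_k}\otimes h'\rangle = \delta_{jk}\,\langle h, \sigma(\delta_{s(e_j)})h'\rangle$. Since $\sigma(\delta_{s(e_j)})$ is the projection of $H$ onto $H_{s(e_j)}$, the assignment $\delta_{e_j}\otimes h \mapsto \sigma(\delta_{s(e_j)})h$ identifies $\delta_{e_j}\otimes H$ with $H_{s(e_j)}$, and these subspaces are mutually orthogonal across $j$. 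Passing to the self-dual (here just the Hilbert space) completion yields $E\overbar{\otimes}_\sigma H \cong \bigoplus_{j\in J} H_{s(e_j)}$. Consequently any bounded $\eta^*\colon E\overbar{\otimes}_\sigma H \to H$, with $H=\bigoplus_{i\in I}H_{v_i}$, is an operator-valued block matrix $(T_{ij})$ where $T_{ij}\in B(H_{s(e_j)}, H_{v_i})$.

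Next I would compute how the two sides of the intertwining relation act block-diagonally. On one side, the left action satisfies $\varphi(a)\delta_{e_j} = a(r(e_j))\,\delta_{e_j}$, so $\varphi(a)\otimes I$ acts on the $j$-th summand $H_{s(e_j)}$ as the scalar $a(r(e_j))$; on the other side $\sigma(a)$ acts on the $i$-th summand $H_{v_i}$ as the scalar $a(v_i)$. Comparing the $(i,j)$ blocks of $\eta^*(\varphi(a)\otimes I)$ and $\sigma(a)\eta^*$, the intertwining condition becomes $a(r(e_j))\,T_{ij} = a(v_i)\,T_{ij}$ for every $a\in A$, that is, $\bigl(a(r(e_j))-a(v_i)\bigr)T_{ij}=0$ for all $a$.

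Finally I would read off the conclusion: if $r(e_j)\neq v_i$, then taking $a=\delta_{r(e_j)}$ gives $a(r(e_j))-a(v_i)=1$ and forces $T_{ij}=0$; if $r(e_j)=v_i$ the relation is vacuous, so $T_{ij}$ is unconstrained. This is exactly the asserted form. The only genuinely delicate point is the first step---verifying that the orthogonal edge decomposition survives the self-dual completion (which needs a small dominated-convergence argument, using that each vertex fibre $\sum_{s(e)=v}|x(e)|^2$ is finite while $h=(h_i)$ is square-summable) and that boundedness of $\eta^*$ corresponds to uniform boundedness of the matrix $(T_{ij})$. Once the decomposition is in place, the intertwining computation is a routine scalar comparison.
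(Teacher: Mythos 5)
Your proposal is correct and follows essentially the same route as the paper's proof: decompose $E\overbar{\otimes}_\sigma H$ along the edge point masses as $\bigoplus_j H_{s(e_j)}$, view $\eta^*$ as a block matrix, observe that $\varphi(a)\otimes I$ acts on the $j$-th summand by the scalar $a(r(e_j))$ while $\sigma(a)$ acts on the $i$-th summand by $a(v_i)$, and compare blocks in the intertwining relation. Your version is if anything slightly more explicit at the two points the paper glosses over (orthogonality of the edge summands and the choice $a=\delta_{r(e_j)}$ in the final step), but the argument is the same.
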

\begin{proof}
Let $x\in E$, $a\in A$ and $h\in H$. $\sigma (a)= \sigma (\sum \limits^{|G^{0}|}_{i=1}a_{i}\delta _{v_{i}}) = \sum \limits^{|G^{0}|}_{i=1}a_{i}\sigma(\delta _{v_{i}})$. So $\sigma(A)=\bigoplus^{\infty}_{i}a_{i}I_{H_{i}}$ and $H=\bigoplus \limits^{|G^{0}|}_{i=1}H_{i}$. Sometimes, for clarity, we might also write $\sigma(A)=\bigoplus^{\infty}_{i}a_{v_{i}}I_{H_{v_{i}}}$ and $H=\bigoplus \limits^{|G^{0}|}_{i=1}H_{v_{i}}$. Let $x\otimes h \in E\otimes_{\sigma} H$.

$x \otimes h = (\sum \limits^{|G^{1}|}_{j=1}z_{j}\delta _{e_{j}}) \otimes h = \sum \limits^{|G^{1}|}_{j=1}z_{j}\delta _{e_{j}}\cdot \delta _{s(e_{j})} \otimes h
= \sum \limits^{|G^{1}|}_{j=1}z_{j}\delta _{e_{j}} \otimes \sigma(\delta _{s(e_{j})}) h 
= \sum \limits^{|G^{1}|}_{i=1}z_{j}\delta _{e_{j}} \otimes h_{s(e_{j})}$.

So $E \otimes _{\sigma}H=\bigoplus \limits^{|G^{1}|}_{j=1} \mathbb{C}\delta _{e_{j}}\otimes H_{s(e_{j})}\cong \bigoplus \limits^{|G^{1}|}_{j=1} \mathbb{C}\otimes H_{s(e_{j})}\cong \bigoplus \limits^{|G^{1}|}_{j=1}H_{s(e_{j})}$. Then $(E^{\sigma})^{*}= \mathcal{I}(\sigma^{E} \circ \varphi, \sigma)\subset B(E \otimes _{\sigma}H,H)=B(\bigoplus \limits^{|G^{1}|}_{j=1}H_{s(e_{j})},\bigoplus \limits^{|G^{0}|}_{i=1}H_{v_{i}})$. 
Let $\eta ^{*} \in (E^{\sigma})^{*}$. So $\eta ^{*} $ is a block matrix $(T_{ij})$, where $T_{ij}\in B(H_{s(e_{j})},H_{v_{i}})$. 
Sometimes, for clarity, we will write $\eta ^{*} =(T_{ij})$ as $\eta ^{*} =(T_{v_{i}e_{j}})$, where $T_{v_{i}e_{j}}\in B(H_{s(e_{j})},H_{v_{i}})$.
\begin{align*}
\sigma^{E} \circ \varphi (a)(x\otimes h)&=a\cdot x\otimes h=( \sum \limits^{|G^{0}|}_{i=1}a_{i}\delta _{v_{i}})\cdot (\sum \limits^{|G^{1}|}_{j=1}z_{j}\delta _{e_{j}})\otimes h=\sum \limits^{|G^{1}|}_{j=1}z_{j}a_{r(e_{j})}\delta _{e_{j}}\otimes h\\
&=\sum \limits^{|G^{1}|}_{j=1}z_{j}a_{r(e_{j})}\delta _{e_{j}}\cdot \delta _{s(e_{j})} \otimes h= \sum \limits^{|G^{1}|}_{j=1}z_{j}a_{r(e_{j})}\delta _{e_{j}} \otimes \sigma(\delta _{s(e_{j})})h\\
&= \sum \limits^{|G^{1}|}_{j=1}z_{j}a_{r(e_{j})}\delta _{e_{j}} \otimes h_{s(e_{j})}
\end{align*}
That is, $\sigma^{E} \circ \varphi (a)(\sum \limits^{|G^{1}|}_{j=1}z_{j}\delta _{e_{j}} \otimes h_{s(e_{j})})=\sum \limits^{|G^{1}|}_{j=1}z_{j}a_{r(e_{j})}\delta _{e_{j}} \otimes h_{s(e_{j})}$ ,or isomorphically, $\sigma^{E} \circ \varphi (a)(\sum \limits^{|G^{1}|}_{j=1}z_{j} h_{s(e_{j})})=\sum \limits^{|G^{1}|}_{j=1}z_{j}a_{r(e_{j})} h_{s(e_{j})}$. So $\sigma^{E} \circ \varphi (a)=\bigoplus \limits^{|G^{1}|}_{j=1}a_{r(e_{j})}I_{s(e_{j})}$. Since an intertwiner $\eta ^{*}\in (E^{\sigma})^{*}$ satisfies $\eta ^{*}(\sigma^{E} \circ \varphi(a))=\sigma(a)\eta ^{*} $, we have $(T_{ij})(\bigoplus \limits^{|G^{1}|}_{j=1}a_{r(e_{j})}I_{s(e_{j})})=(\bigoplus \limits^{|G^{0}|}_{i=1}a_{v_{i}}I_{H_{v_{i}}})(T_{ij})$. So $(a_{r(e_{j})}T_{ij})=(a_{v_{i}}T_{ij})$.

Then, since each edge $e_{j}$ has a unique range $r(e_{j})$, we must have $T_{ij}=0$ if $r(e_{j})\neq v_{i}$. In other words, if we write the blocks $T_{ij}$ as $T_{v_{i}e_{j}}$, we have  that the only (possible) nonzero blocks of an intertwiner $(T_{v_{i}e_{j}})\in (E^{\sigma})^{*}$ are the blocks $T_{r(e_{j})e_{j}} \in B(H_{s(e_{j})},H_{r(e_{j})})$. That is, the only nonzero block of each column $e_{j}$ of $\eta ^{*}\in (E^{\sigma})^{*}$ is the block on row $r(e_{j})$.
\end{proof}
Furthermore, since $||\eta^{*}||=||\eta^{*}\eta||^{\frac{1}{2}}$,  we have $||\eta^{*}||=||(T_{ij})||=||(T_{ij})(T_{ij})^{*}||^{\frac{1}{2}}=||\bigoplus \limits^{|G^{0}|}_{i=1}\sum \limits_{r(e_{j})=v_{i}}T_{ij}T_{ij}^{*}|| $. So $\mathbb{D}((E^{\sigma})^{*})=\{(T_{ij})\enspace | \enspace  T_{ij}\in B(H_{s(e_{j})},H_{v_{i}}), T_{ij}=0\enspace \text{if}\enspace r(e_{j})\neq v_{i}, \text{and}\enspace ||\bigoplus \limits^{|G^{0}|}_{i=1}\sum \limits_{r(e_{j})=v_{i}}T_{ij}T_{ij}^{*}||<1 \}$.
\begin{corollary}\label{centergraph}
If $(E,A)$ is a $W^{*}$-graph correspondence and $\sigma:A\to B(H)$ is a faithful normal representation of $A$, then the elements of $\mathfrak{Z}((E^{\sigma})^{*})$ are block matrices $(T_{ij})$ such that $ T_{ij} =\left\{ \begin{array}{lll}
z_{j}I_{H_{s(e_{j})}} & \quad & \mbox{if }e_{j} \mbox{ is a loop}\\
0 & \quad & \mbox{otherwise}.
\end{array}\right.$
\end{corollary}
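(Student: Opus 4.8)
The plan is to reduce the center condition for the $\sigma(A)'$-correspondence $(E^\sigma)^*$ to a purely blockwise commutation relation, and then read off the two cases directly from \cref{graphintertwiners}. First I would make the center condition explicit. The actions on $E^\sigma$ are $c\cdot\eta=(I_E\otimes c)\eta$ and $\eta\cdot c=\eta c$ for $c\in\sigma(A)'$, so $\eta$ is central exactly when $(I_E\otimes c)\eta=\eta c$ for all $c$. Taking adjoints, writing $T=\eta^*\in(E^\sigma)^*$, and using that $\sigma(A)'$ is closed under $*$, this is equivalent to the intertwining-type identity
\[
T(I_E\otimes c)=cT\qquad\text{for all } c\in\sigma(A)'.
\]

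Next I would put both sides in block form using the decompositions established in the proof of \cref{graphintertwiners}. Since $\sigma(A)=\bigoplus_i\mathbb{C}I_{H_{v_i}}$ is the diagonal algebra on $H=\bigoplus_i H_{v_i}$, its commutant is the full block-diagonal algebra $\sigma(A)'=\bigoplus_i B(H_{v_i})$; thus every $c\in\sigma(A)'$ has the form $c=\bigoplus_i c_{v_i}$ with the components $c_{v_i}\in B(H_{v_i})$ completely arbitrary. Under the identification $E\otimes_\sigma H\cong\bigoplus_j H_{s(e_j)}$, the operator $I_E\otimes c$ acts as $\bigoplus_j c_{s(e_j)}$. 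Comparing the $(i,j)$ blocks of the two sides of the displayed equation then yields
\[
T_{ij}\,c_{s(e_j)}=c_{v_i}\,T_{ij}\qquad\text{for all } c\in\sigma(A)'.
\]

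Now I would invoke \cref{graphintertwiners} to discard all blocks with $v_i\neq r(e_j)$ and split the surviving blocks (those with $v_i=r(e_j)$) into two cases. If $e_j$ is a loop, then $s(e_j)=r(e_j)=v_i$, so $c_{s(e_j)}$ and $c_{v_i}$ are the \emph{same} component and the relation says that $T_{ij}$ commutes with every operator in $B(H_{v_i})$; hence $T_{ij}$ is a scalar $z_j I_{H_{s(e_j)}}$. If $e_j$ is not a loop, then $s(e_j)\neq r(e_j)=v_i$, so $c_{s(e_j)}$ and $c_{v_i}$ are \emph{independent} components; choosing $c$ with $c_{s(e_j)}=I$ and $c_{v_i}=0$ forces $T_{ij}=0$. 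This is precisely the stated description of $\mathfrak{Z}((E^\sigma)^*)$.

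I expect the only genuine subtlety to lie in this loop/non-loop dichotomy: the whole conclusion turns on whether the source and range summands of $e_j$ are the same block of $H$ or two distinct blocks, since that is exactly what decides whether the operators $c_{s(e_j)}$ and $c_{v_i}$ are forced to agree or may be varied independently. The remaining ingredients---the adjoint bookkeeping that produces the intertwining form, and the block description of $I_E\otimes c$ and of $\sigma(A)'$---are routine once \cref{graphintertwiners} and the block picture from its proof are in hand.
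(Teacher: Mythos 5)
Your proof is correct and takes essentially the same route the paper intends: the paper's one-line proof cites Muhly--Solel's Lemma 4.12(2) precisely for the commutation characterization $\eta\,c=(I_{E}\otimes c)\eta$ of the center, which you instead derive directly from the stated actions on $E^{\sigma}$, and the blockwise loop/non-loop analysis you then carry out (using \cref{graphintertwiners} together with $\sigma(A)'=\bigoplus_{i}B(H_{v_{i}})$) is exactly the content of ``the definition of left and right actions in a $W^{*}$-graph correspondence'' invoked there. Your version is simply the fully worked-out form of that citation, and it in fact yields the sharper statement that the scalar blocks occur only in positions $(r(e_{j}),e_{j})$ with $e_{j}$ a loop.
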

\begin{proof}
It follows from \cite[Lemma 4.12 (2)]{Muhly2008b} and the definition of left and right actions in a  $W^{*}$-graph correspondence.
\end{proof}
%

\subsection{Commutants of Induced Representations of The Hardy Algebra}
Our next goal is to show that if $(E,A)$ is a $W^{*}$-correspondence derived from a directed graph $G$ and $\sigma, \tau$ are two faithful normal representations of $A$, then $(\sigma^{\ms{F}(E)}(H^{\infty}(E)))'$ is weakly Morita equivalent to $(\tau^{\ms{F}(E)}(H^{\infty}(E)))'$.
\begin{lemma} \label{algebrasme}
The $W^{*}$-algebras $\sigma (A)'$ and $\tau (A)'$ are weakly Morita equivalent.
\end{lemma}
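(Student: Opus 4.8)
The plan is to combine the structure theory for faithful normal representations of $A = \ell^{\infty}_{|G^{0}|}$ established just above with an explicit $W^{*}$-equivalence bimodule between the two commutants. Recall from the preceding discussion that a faithful normal representation $\sigma$ decomposes as $\sigma(a) = \bigoplus^{\infty}_{i} a_{i} I_{H_{i}^{\sigma}}$ on $H^{\sigma} = \bigoplus_{i \in I} H_{i}^{\sigma}$ with $\dim H_{i}^{\sigma} = m_{i}$, and likewise $\tau(a) = \bigoplus^{\infty}_{i} a_{i} I_{H_{i}^{\tau}}$ on $H^{\tau} = \bigoplus_{i \in I} H_{i}^{\tau}$ with $\dim H_{i}^{\tau} = n_{i}$. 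Taking commutants gives $\sigma(A)' \cong \bigoplus^{\infty}_{i} B(H_{i}^{\sigma})$ and $\tau(A)' \cong \bigoplus^{\infty}_{i} B(H_{i}^{\tau})$. The decisive structural facts I would record first are that \emph{both} direct sums are indexed by the \emph{same} set $I = G^{0}$, and that faithfulness of $\sigma$ and $\tau$ forces $m_{i} \geq 1$ and $n_{i} \geq 1$, i.e. $H_{i}^{\sigma} \neq 0 \neq H_{i}^{\tau}$ for every $i$.

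Next I would take for the equivalence bimodule the intertwiner space $X = \{ T \in B(H^{\sigma}, H^{\tau}) : T\sigma(a) = \tau(a) T \text{ for all } a \in A \}$, which under the decompositions above is precisely the $\ell^{\infty}$-direct sum $\bigoplus^{\infty}_{i} B(H_{i}^{\sigma}, H_{i}^{\tau})$ of ``off-diagonal blocks''. I would equip $X$ with the left $\tau(A)'$-action and right $\sigma(A)'$-action given by composition of (block-diagonal) operators, together with the inner products $\langle S, T \rangle_{\sigma(A)'} = S^{*}T$ and ${}_{\tau(A)'}\langle S, T \rangle = S T^{*}$. A short computation shows $S^{*}T \in \sigma(A)'$ and $S T^{*} \in \tau(A)'$ (using $T\sigma(a) = \tau(a)T$ and its adjoint), so the inner products land in the correct algebras, and the compatibility condition ${}_{\tau(A)'}\langle S, T \rangle \cdot R = S \cdot \langle T, R \rangle_{\sigma(A)'}$ is immediate since both sides equal $S T^{*} R$.

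The substantive step — and the one I expect to be the main obstacle — is verifying that $X$ is a genuine $W^{*}$-equivalence bimodule, namely that it is selfdual as a $W^{*}$-module over each algebra and $w^{*}$-full on both sides. Selfduality follows from the $\ell^{\infty}$-direct sum structure, but $w^{*}$-fullness is where faithfulness is essential and must be argued coordinatewise. For fixed $i$, since $H_{i}^{\tau} \neq 0$ one fixes a unit vector $w \in H_{i}^{\tau}$ and, given vectors $\xi, \eta \in H_{i}^{\sigma}$, takes the rank-one maps $S_{i}, T_{i} \in B(H_{i}^{\sigma}, H_{i}^{\tau})$ through $w$ determined by $\xi$ and $\eta$; then $S_{i}^{*}T_{i}$ is the rank-one operator $\xi \otimes \eta^{*} \in B(H_{i}^{\sigma})$. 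Hence the span of the range of $\langle \cdot, \cdot \rangle_{\sigma(A)'}$ contains all finite-rank operators in each summand, so it is $w^{*}$-dense in $\bigoplus^{\infty}_{i} B(H_{i}^{\sigma}) = \sigma(A)'$; left $w^{*}$-fullness in $\tau(A)'$ is entirely symmetric.

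Granting these verifications, $X$ is a $\tau(A)'$-$\sigma(A)'$ $W^{*}$-equivalence bimodule, and therefore $\sigma(A)'$ and $\tau(A)'$ are weakly Morita equivalent $W^{*}$-algebras, as claimed. I note that an alternative route would be to show that each commutant is Morita equivalent to $A \cong \ell^{\infty}(I)$ itself, via the column module $\bigoplus_{i} H_{i}$, and then invoke transitivity of the equivalence; but the direct bimodule constructed above is self-contained and does not require establishing transitivity, so I would present that as the primary argument.
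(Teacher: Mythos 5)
Your proposal is correct and takes essentially the same route as the paper: the paper also constructs the explicit block-diagonal intertwiner bimodule (it writes $X=\bigoplus^{\infty}_{q}B(K_{q},H_{q})$, i.e.\ the conjugate orientation of yours), uses the same inner products $T^{*}S$ and $TS^{*}$, proves $w^{*}$-fullness coordinatewise via matrix units together with $w^{*}$-density of the compacts in each $B(K_{q})$, and establishes duality by exhibiting $X$ as the dual of the $1$-direct sum of trace-class spaces. Your explicit observation that faithfulness forces each $H_{i}\neq 0$ is a point the paper leaves implicit, but otherwise the two arguments coincide.
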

\begin{proof}
If $\sigma$ and $\tau$ are faithful normal representations of $A$ on $H$ and $K$ respectively, then
\begin{center}
$H=\bigoplus\limits_{q\in I}H_{q}  \qquad \text{and} \qquad K=\bigoplus\limits_{q\in I}K_{q} \quad$ ,

\medskip
$\sigma (A)=\bigoplus^{\infty}_{q}\mathbb{C} _{q}I_{H_{q}} \qquad \text{and} \qquad  \tau (A)=\bigoplus^{\infty}_{q}\mathbb{C} _{q}I_{K_{q}} \quad$ ,

\medskip
$\sigma (A)'= \bigoplus^{\infty}_{q}B(H_{q}) \qquad \text{and} \qquad  \tau (A)'= \bigoplus^{\infty}_{q}B(K_{q})$.
\end{center}
Let $X=\bigoplus^{\infty}_{q}B(K_{q},H_{q})$. Let $\bigoplus\limits_{q\in I}T_{q},\bigoplus\limits_{q\in I}S_{q} \in X$. We show that $X$ with inner products
\begin{eqnarray*}
 \langle \bigoplus\limits_{q\in I}T_{q},\bigoplus\limits_{q\in I}S_{q} \rangle _{\tau (A)'} &=& \bigoplus\limits_{q\in I}T^{*}_{q}S_{q} \\
 _{\sigma (A)'} \langle \bigoplus\limits_{q\in I}T_{q},\bigoplus\limits_{q\in I}S_{q} \rangle &=& \bigoplus\limits_{q\in I}T_{q}S_{q}^{*} 
\end{eqnarray*}
is a $\sigma (A)'$ -$\tau (A)'$ equivalence bimodule, where the left and right actions are given by regular matrix multiplication. First, we check that the two previous equations do define inner products on $X$. Let $x=\bigoplus\limits_{q\in I}T_{q}, y=\bigoplus\limits_{q\in I}S_{q}, z=\bigoplus\limits_{q\in I}U_{q} \in X$, $a=\bigoplus\limits_{q\in I}R_{q} \in \tau (A)'$ and $ \lambda, \mu \in \mathbb{C}$. Then
\begin{align*}
\langle x, \lambda y+\mu z \rangle _{\tau (A)'} &= \langle\bigoplus\limits_{q\in I}T_{q}, \lambda \bigoplus\limits_{q\in I}S_{q}+\mu  \bigoplus \limits_{q\in I} U_{q}\rangle _{\tau (A)'}=\langle\bigoplus\limits_{q\in I}T_{q}, \bigoplus\limits_{q\in I} (\lambda S_{q}+\mu U_{q})\rangle _{\tau (A)'}\\
&=\bigoplus\limits_{q\in I}T_{q}^{*} (\lambda S_{q}+\mu U_{q})=\lambda \bigoplus\limits_{q\in I}T_{q}^{*} S_{q}+\mu \bigoplus\limits_{q\in I}T_{q}^{*} U_{q}\\
&=\lambda \langle \bigoplus\limits_{q\in I}T_{q}, \bigoplus\limits_{q\in I} S_{q}\rangle _{\tau (A)'}+ \mu \langle \bigoplus\limits_{q\in I}T_{q}, \bigoplus\limits_{q\in I} U_{q}\rangle _{\tau (A)'}\\
&= \lambda \langle x, y\rangle _{\tau (A)'} +\mu  \langle x,z \rangle _{\tau (A)'}&
\end{align*}
\begin{align*}
\langle x, y \cdot a\rangle _{\tau (A)'} &= \langle\bigoplus\limits_{q\in I}T_{q},  \bigoplus\limits_{q\in I}S_{q}\cdot (\bigoplus\limits_{q\in I}R_{q}) \rangle _{\tau (A)'}=\langle\bigoplus\limits_{q\in I}T_{q},  \bigoplus\limits_{q\in I}S_{q}R_{q}) \rangle _{\tau (A)'}\\
&=\bigoplus\limits_{q\in I}T_{q}^{*}S_{q}R_{q}=(\bigoplus\limits_{q\in I}T_{q}^{*}S_{q}) (\bigoplus\limits_{q\in I}R_{q})&\\
&=\langle\bigoplus\limits_{q\in I}T_{q}, \bigoplus\limits_{q\in I}S_{q}\rangle _{\tau (A)'}\bigoplus\limits_{q\in I}R_{q}=\langle x, y\rangle _{\tau (A)'}a
\end{align*}
\begin{align*}
\langle x, y\rangle _{\tau (A)'}^{*} = \langle\bigoplus\limits_{q\in I}T_{q}, \bigoplus\limits_{q\in I}S_{q}\rangle _{\tau (A)'}^{*}= (\bigoplus\limits_{q\in I}T_{q}^{*}S_{q})^{*}=\bigoplus\limits_{q\in I}S_{q}^{*}T_{q}&=\langle\bigoplus\limits_{q\in I}S_{q},  \bigoplus\limits_{q\in I}T_{q}\rangle _{\tau (A)'}\\
&=\langle y, x\rangle _{\tau (A)'}&
\end{align*}
\begin{align*}
\langle x, x\rangle _{\tau (A)'} &= \langle\bigoplus\limits_{q\in I}T_{q}, \bigoplus\limits_{q\in I}T_{q}\rangle _{\tau (A)'}= \bigoplus\limits_{q\in I}T_{q}^{*}T_{q}=(\bigoplus\limits_{q\in I}T_{q})^{*}(\bigoplus\limits_{q\in I}T_{q}) \geq 0&
\end{align*} 
Since each diagonal entry $(T_{q}^{*}T_{q})_{jj}$ of $T_{q}^{*}T_{q}$ is the sum  $\sum\limits_{i}( (T_{q})_{ij})^{2}$ of the squares of the entries on column $j$ of $T_{q}$, we have that $T_{q}^{*}T_{q}=0$ implies $T_{q}=0$. So $\langle x, x\rangle _{\tau (A)'}=\langle\bigoplus\limits_{q\in I}T_{q}, \bigoplus\limits_{q\in I}T_{q}\rangle _{\tau (A)'}=\bigoplus\limits_{q\in I}T_{q}^{*}T_{q}=0$ implies $T_{q}=0$ for all $q \in I$. So $x=\bigoplus\limits_{q\in I}T_{q}=0$. Thus $\langle \cdot , \cdot \rangle _{\tau (A)'}$ is a right inner product on $X$. Similarly, $_{\sigma (A)'} \langle \cdot , \cdot \rangle$ is a left inner product on $X$.

Now we show that $X$ is a  $w^{*}$-full left Hilbert $\sigma (A)'$-module and a  $w^{*}$-full right Hilbert $\tau (A)'$-module. 
Let $M \in \tau (A)'$. So $M=\bigoplus\limits_{q\in I}M_{q}$\quad (where $M_{q} \in B(K_{q})$). Assume for the moment, each Hilbert space $K_{q}$ has finite dimension $n_{q}$. For each $q\in I$, $M_{q}=\sum\limits_{i,j=1}^{n_{q}}m_{ij}^{(q)}E_{ij}^{(q)}$ 
, where $m_{ij}^{(q)} \in  \mathbb{C}$ and $\{E_{ij}^{(q)}\}_{i,j}$ is the usual (matrix unit) basis for $B(K_{q})$. That is, $E_{ij}^{(q)}$ is the $n_{q} \times n_{q}$ matrix with $1$ in the $i,j$ entry and zeros everywhere else. Let $\{T_{ij}^{(q)}\}_{i,j}$ be a matrix basis for $B(K_{q},H_{q})$. Then $E_{ij}^{(q)}=T_{1i}^{(q)}\text{}^{*} T_{1j}^{(q)}=\langle T_{1i}^{(q)}, T_{1j}^{(q)} \rangle _{\tau (A)'}$. So  $M_{q}=\sum\limits_{i,j=1}^{n_{q}}m_{ij}^{(q)}E_{ij}^{(q)}=\sum\limits_{i,j=1}^{n_{q}}m_{ij}^{(q)}\langle T_{1i}^{(q)}, T_{1j}^{(q)} \rangle _{\tau (A)'}$.

Thus $M=\bigoplus\limits_{q\in I}M_{q}=\bigoplus\limits_{q\in I}\sum\limits_{i,j=1}^{n_{q}}m_{ij}^{(q)}E_{ij}^{(q)}=\bigoplus\limits_{q\in I}\sum\limits_{i,j=1}^{n_{q}}m_{ij}^{(q)}\langle T_{1i}^{(q)}, T_{1j}^{(q)} \rangle _{\tau (A)'}$.

Likewise, since $E_{ij}^{(q)}=T_{i1}^{(q)}T_{1j}^{(q)}=T_{i1}^{(q)}T_{j1}^{(q)}\text{}^{*}=_{\sigma (A)'} \langle T_{i1}^{(q)}, T_{j1}^{(q)} \rangle$, we have $M=\bigoplus\limits_{q\in I}M_{q}=\bigoplus\limits_{q\in I}\sum\limits_{i,j=1}^{n_{q}}m_{ij}^{(q)}E_{ij}^{(q)}=\bigoplus\limits_{q\in I}\sum\limits_{i,j=1}^{n_{q}}m_{ij}^{(q)}\text{} _{\sigma (A)'}\langle T_{i1}^{(q)}, T_{j1}^{(q)} \rangle$. Then, since for each $q$,  $\mathbb{K}(K_{q})$ is $w^{*}$ dense in $B(K_{q})$ (by Goldstine's theorem), 
 we have that $X=\bigoplus\limits_{q\in I}B(K_{q},H_{q})$ is a  $w^{*}$-full left Hilbert $\sigma (A)'$-module and a  $w^{*}$-full right Hilbert $\tau (A)'$-module.

Next, we show that $\sigma (A)'$ acts as adjointable operators on $X_{\tau (A)'}$ and $\tau (A)'$ acts as adjointable operators on $_{\sigma (A)'}X$. Let $x, y \in X$, $a\in \sigma (A)'$ and $b\in \tau (A)'$. Then
{%
  \abovedisplayskip=0pt \belowdisplayskip=0pt
\begin{align*}
\langle a \cdot x, y \rangle _{\tau (A)'} &= \langle(\bigoplus\limits_{q\in I}N_{q})\cdot \bigoplus\limits_{q\in I}T_{q},\bigoplus\limits_{q\in I}S_{q} \rangle_{\tau (A)'}=\langle\bigoplus\limits_{q\in I}N_{q} T_{q},\bigoplus\limits_{q\in I}S_{q} \rangle_{\tau (A)'}\\
&=\bigoplus\limits_{q\in I}T_{q}^{*} N_{q}^{*}S_{q}=\langle\bigoplus\limits_{q\in I} T_{q},\bigoplus\limits_{q\in I}N_{q}^{*}S_{q} \rangle_{\tau (A)'}\\
&= \langle\bigoplus\limits_{q\in I}T_{q}, (\bigoplus\limits_{q\in I}N_{q})^{*} \cdot \bigoplus\limits_{q\in I}S_{q} \rangle_{\tau (A)'}=\langle x, a^{*} \cdot y\rangle _{\tau (A)'} 
\end{align*}
}%
and
{%
  \abovedisplayskip=0pt \belowdisplayskip=0pt
\begin{align*}
_{\sigma (A)'}\langle x \cdot b, y \rangle &=_{\sigma (A)'} \langle(\bigoplus\limits_{q\in I}T_{q})\cdot (\bigoplus\limits_{q\in I}M_{q}),\bigoplus\limits_{q\in I}S_{q} \rangle=_{\sigma (A)'} \langle\bigoplus\limits_{q\in I}T_{q} M_{q},\bigoplus\limits_{q\in I}S_{q} \rangle\\
&=\bigoplus\limits_{q\in I}T_{q} M_{q}S_{q}^{*}=_{\sigma (A)'} \langle\bigoplus\limits_{q\in I} T_{q},\bigoplus\limits_{q\in I}S_{q}M_{q}^{*} \rangle\\
&= _{\sigma (A)'}\langle\bigoplus\limits_{q\in I}T_{q}, (\bigoplus\limits_{q\in I}S_{q}) \cdot (\bigoplus\limits_{q\in I}M_{q})^{*} \rangle=_{\sigma (A)'} \langle x,y \cdot b^{*} \rangle
\end{align*}
}%

Next, we show that the two inner products are compatible.

For all $x=\bigoplus\limits_{q\in I}T_{q}, y=\bigoplus\limits_{q\in I}S_{q}, z=\bigoplus\limits_{q\in I}U_{q} \in X$, we have

\begingroup
\allowdisplaybreaks
{%
  \abovedisplayskip=0pt \belowdisplayskip=0pt
\begin{align*}
_{\sigma (A)'}\langle x, y \rangle \cdot z &= _{\sigma (A)'} \langle\bigoplus\limits_{q\in I}T_{q},\bigoplus\limits_{q\in I}S_{q} \rangle \cdot\bigoplus\limits_{q\in I}U_{q}\\
&= \bigoplus\limits_{q\in I}T_{q} S_{q}^{*} \cdot (\bigoplus\limits_{q\in I}U_{q})\\
&=\bigoplus\limits_{q\in I}T_{q} S_{q}^{*}U_{q}\\
&=(\bigoplus\limits_{q\in I} T_{q})\cdot \bigoplus\limits_{q\in I}S_{q}^{*}U_{q}\\
&= \bigoplus\limits_{q\in I}T_{q}\cdot \langle \bigoplus\limits_{q\in I}S_{q}, \bigoplus\limits_{q\in I}U_{q} \rangle _{\tau (A)'}\\
&=x \cdot  \langle y,z\rangle _{\tau (A)'}
\end{align*}
}%
\endgroup
Finally, each $B(K_{q},H_{q})$ is the dual of the space $S^{1}((H_{q},K_{q})$ of trace class operators. 
So $X=\bigoplus^{\infty}_{q}B(K_{q},H_{q}) =(\bigoplus^{1}_{q\in I}S^{1}(K_{q},H_{q}))^{*}$ 
is a dual space. Here, $\bigoplus^{1}_{q\in I}$ denotes the $1$-direct sum. For more information on this direct sum and its $\ell^{1}$-norm, see \cite[1.4.13]{Blecher2004d} for example.
 Thus $X$ is a 
 $\sigma (A)'$-$\tau (A)'$ $W^{*}$-equivalence bimodule. That is, $\sigma (A)'$ and $\tau (A)'$ are weakly Morita equivalent.  
\end{proof}
\begin{theorem}\label{moritadual}
If ($E,A$) is a $W^{*}$-graph correspondence and $\sigma :A \to B(H)$, $\tau :A \to B(K)$ are faithful normal representations of $A$, then $(E^{\sigma}$, $\sigma(A)')\stackrel{WME}{\sim}_{X} (E^{\tau}$, $\tau(A)')$.
\end{theorem}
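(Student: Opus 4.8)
The plan is to verify the definition of weak Morita equivalence of $W^{*}$-correspondences directly. \Cref{algebrasme} already supplies a $\sigma(A)'$-$\tau(A)'$ $W^{*}$-equivalence bimodule $X=\bigoplus^{\infty}_{q}B(K_{q},H_{q})$, so the only remaining task is to produce a $\sigma(A)'$-$\tau(A)'$ $W^{*}$-correspondence isomorphism $W\colon X\overbar{\otimes}_{\tau(A)'}E^{\tau}\to E^{\sigma}\overbar{\otimes}_{\sigma(A)'}X$, after which $(E^{\sigma},\sigma(A)')\stackrel{WME}{\sim}_{X}(E^{\tau},\tau(A)')$ follows by definition.

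First I would record the concrete form of the two dual correspondences supplied by \Cref{graphintertwiners}. An element $\zeta\in E^{\sigma}$ is determined by edge blocks $\zeta_{e}\in B(H_{r(e)},H_{s(e)})$, one per edge $e$, with $\sigma(A)'$-valued inner product $\langle\zeta,\zeta'\rangle=\zeta^{*}\zeta'=\bigoplus_{q}\sum_{r(e)=q}\zeta_{e}^{*}\zeta_{e}'$, left action $(a\cdot\zeta)_{e}=a_{s(e)}\zeta_{e}$, and right action $(\zeta\cdot b)_{e}=\zeta_{e}b_{r(e)}$; the same holds for $E^{\tau}$ with $K$ in place of $H$. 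I would then compute each balanced tensor product edge by edge via operator composition. Matching the $\sigma(A)'$-index of $X$ against the range vertex of each edge gives $E^{\sigma}\overbar{\otimes}_{\sigma(A)'}X\cong\bigoplus_{e}B(K_{r(e)},H_{s(e)})$ through $\zeta\otimes y\mapsto(\zeta_{e}\,y_{r(e)})_{e}$, while matching the $\tau(A)'$-index of $X$ against the source vertex gives $X\overbar{\otimes}_{\tau(A)'}E^{\tau}\cong\bigoplus_{e}B(K_{r(e)},H_{s(e)})$ through $x\otimes\eta\mapsto(x_{s(e)}\,\eta_{e})_{e}$. I would define $W$ to be the composite of the second identification with the inverse of the first, so that $W$ is the identity on the common edge-indexed operator space.

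With $W$ in hand, the verification that it is a $W^{*}$-correspondence isomorphism is a block-by-block check on that common space. The left $\sigma(A)'$-action is $C_{e}\mapsto a_{s(e)}C_{e}$ from either side, the right $\tau(A)'$-action is $C_{e}\mapsto C_{e}\,b_{r(e)}$ from either side, and the $\tau(A)'$-valued inner product of $(C_{e})$ and $(C_{e}')$ reduces, from both tensor products, to $\bigoplus_{q}\sum_{r(e)=q}C_{e}^{*}C_{e}'$; these three identities are exactly the statements that $W$ is $\sigma(A)'$-$\tau(A)'$ bilinear and preserves the inner product. Since $W$ is assembled from operator compositions it is $w^{*}$-continuous, and being isometric with full range onto the self-dual completion it is bijective.

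The main obstacle is the second step: showing that the edge-wise composition maps are genuine $W^{*}$-module isomorphisms onto the common space, rather than mere bijections on algebraic spans. This requires checking that they respect the balancing relations over $\sigma(A)'$ and $\tau(A)'$ so the composition formulas are well defined on the quotient, that they extend to the self-dual (weak$^{*}$) completions defining $\overbar{\otimes}$, and that they are surjective onto the full ultraweak direct sum, whose module norm is the one inherited from \Cref{graphintertwiners} rather than a naive $\infty$-sum norm. The index bookkeeping—tracking which summand $H_{q}$ or $K_{q}$ of $X$ is contracted against the source versus the range of each edge—is where errors are most likely, and is the crux of establishing that both tensor products realize literally the same operator space.
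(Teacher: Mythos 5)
Your plan is correct in substance but organized differently from the paper. The paper does not identify the two-fold tensor products $X\overbar{\otimes}_{\tau(A)'}E^{\tau}$ and $E^{\sigma}\overbar{\otimes}_{\sigma(A)'}X$ directly; instead it constructs a single explicit $W^{*}$-correspondence isomorphism $\varphi\colon \widetilde{X}\overbar{\otimes}_{\sigma(A)'}E^{\sigma}\overbar{\otimes}_{\sigma(A)'}X\to E^{\tau}$, $\widetilde{x}\otimes\eta\otimes y\mapsto (I_{E}\otimes x^{*})\eta y$, verifies bimodularity and preservation of the inner product by direct computation, and then obtains the required isomorphism $E^{\sigma}\overbar{\otimes}X\cong X\overbar{\otimes}E^{\tau}$ formally, by tensoring with $X$ on the left and cancelling $X\overbar{\otimes}_{\tau(A)'}\widetilde{X}\cong\sigma(A)'$ via \cref{mA}. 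Your route replaces that formal cancellation by two concrete edge-indexed identifications of the balanced tensor products with the common space $\bigoplus_{e}B(K_{r(e)},H_{s(e)})$; your block bookkeeping ($\zeta_{e}\in B(H_{r(e)},H_{s(e)})$, actions $a_{s(e)}\zeta_{e}$ and $\zeta_{e}b_{r(e)}$, inner product $\bigoplus_{q}\sum_{r(e)=q}\zeta_{e}^{*}\zeta_{e}'$) is consistent with \cref{graphintertwiners}, and the inner-product and bimodule checks you describe do close up. What your approach buys is transparency — $W$ becomes literally the identity on a common operator space and no three-fold tensor product is needed; what the paper's approach buys is that only one surjectivity argument has to be run, and the passage from the three-fold isomorphism to the required $W$ is purely formal.

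The one place where your proposal stops short of a proof is exactly the step you flag as the ``main obstacle'': surjectivity of the composition maps onto the self-dual completion. This is not automatic and is where the paper does its real work: it writes an arbitrary block of $E^{\tau}$ as a product $Z_{ij}Y_{j1}T_{1j}$ of matrix units drawn from $B(H_{r(e)},K_{r(e)})$, $B(H_{s(e)},H_{r(e)})$ and $B(K_{s(e)},H_{s(e)})$ in the finite-dimensional case, and then invokes $B(K_{s(e)},K_{r(e)})=\mathbb{K}(K_{s(e)},K_{r(e)})^{**}$ together with Goldstine's theorem to get $w^{*}$-density in general. You would need the same matrix-unit argument, run once for each of your two identifications (and once more to see that isometric-with-$w^{*}$-dense-range suffices for surjectivity onto the self-dual completion, as in \cref{mA}). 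Until that is written out, the claim that both tensor products ``realize literally the same operator space'' is an assertion rather than a theorem; with it, your argument is complete and equivalent in strength to the paper's.
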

\begin{proof}
Let $X=\bigoplus \limits ^{|G^{0}|}_{j=1}B(K_{i},H_{i})$ be the $\sigma(A)'$-$\tau(A)'$ equivalence bimodule given in \cref{algebrasme}. We show that the map
\begingroup
\allowdisplaybreaks
{%
  \abovedisplayskip=0pt \belowdisplayskip=1pt
\begin{align*}
  \varphi \colon  _{\tau(A)'} \widetilde{X}\overbar{\otimes}_{\sigma(A)'} E^{\sigma}\overbar{\otimes}_{\sigma(A)'}X_{\tau(A)'} \quad &\longrightarrow \quad _{\tau(A)'}E^{\tau}_{\tau(A)'}\\
  \widetilde{x}\otimes \eta \otimes y \quad &\longmapsto \quad (I_{E} \otimes x^{*})\eta y
\end{align*} 
}%
\endgroup

gives a $W^{*}$-correspondence isomorphism
, where $x, y \in X$ and $(I_{E} \otimes x^{*}):E \overbar{\otimes}_{\sigma(A)}H \to E \overbar{\otimes}_{\tau(A)}K$ is defined by $(I_{E} \otimes x^{*})(\xi \otimes h)=\xi \otimes x^{*}h$. 

Let $x,y \in \thinspace_{\sigma(A)'}X_{\tau(A)'} \thinspace$, $\eta \in E^{\sigma}$, $a,b \in \tau(A)'$. Note that $\varphi$ is well defined, since for any $c,d \in \sigma(A)' $, we have $\varphi(\widetilde{x} \cdot c\otimes \eta \cdot d \otimes y)=\varphi(\widetilde{c^{*}x}\otimes \eta d \otimes y)= (I_{E} \otimes x^{*}c)\eta dy=(I_{E} \otimes x^{*})(I_{E} \otimes c)\eta dy=(I_{E} \otimes x^{*})(c \cdot \eta)(d \cdot y)=\varphi(\widetilde{x} \otimes c \cdot \eta \otimes d\cdot y)$. 
\begingroup
\allowdisplaybreaks
\begin{align*}
 \varphi(a \cdot ( \widetilde{x}\otimes \eta \otimes y)\cdot b) &=  \varphi((a \cdot \widetilde{x})\otimes \eta \otimes (y\cdot b)) \\
 &= \varphi(( \widetilde{x \cdot a^{*}}\otimes \eta \otimes (y\cdot b)) \\
 &= \varphi(( \widetilde{xa^{*}}\otimes \eta \otimes (yb)) \\
 &= (I_{E} \otimes ax^{*})\eta yb\\
 &= (I_{E} \otimes a)(I_{E} \otimes x^{*})\eta yb\\
 &= a\cdot ((I_{E} \otimes x^{*})\eta y)\cdot b\\
  &= a\cdot \varphi(\widetilde{x}\otimes \eta \otimes y)\cdot b
\end{align*}
\endgroup
Let $\widetilde{x_{1}}\otimes \eta_{1} \otimes y_{1}$, $\widetilde{x_{2}}\otimes \eta_{2} \otimes y_{2} \in \widetilde{X} \overbar{\otimes} E^{\sigma} \overbar{\otimes} X$. Then
\begingroup
\allowdisplaybreaks
\begin{align*}
\langle \varphi(\widetilde{x_{1}}\otimes \eta_{1} \otimes y_{1}),\varphi(\widetilde{x_{2}}\otimes \eta_{2} \otimes y_{2})\rangle _{\tau(A)'}
&= \langle (I_{E} \otimes x^{*}_{1}) \eta_{1}y_{1},(I_{E} \otimes x^{*}_{2}) \eta_{2}y_{2}\rangle _{\tau(A)'}\\
&= y^{*}_{1}\eta^{*}_{1}(I_{E} \otimes x_{1})(I_{E} \otimes x^{*}_{2})\eta_{2}y_{2}\\
&= y^{*}_{1}\eta^{*}_{1}(I_{E} \otimes x_{1}x^{*}_{2})\eta_{2}y_{2}\\
&= \langle y_{1},\eta^{*}_{1}(I_{E} \otimes x_{1}x^{*}_{2})\eta_{2}y_{2}\rangle _{\tau(A)'}\\
&= \langle y_{1},(\eta^{*}_{1}(I_{E} \otimes x_{1}x^{*}_{2})\eta_{2})\cdot y_{2}\rangle _{\tau(A)'}\\
&= \langle y_{1},\langle \eta_{1},(I_{E} \otimes x_{1}x^{*}_{2})\eta_{2}\rangle _{\sigma(A)'} \cdot y_{2}\rangle _{\tau(A)'}\\
&= \langle y_{1},\langle \eta_{1},(x_{1}x^{*}_{2})\cdot\eta_{2}\rangle _{\sigma(A)'} \cdot y_{2}\rangle _{\tau(A)'}\\
&= \langle y_{1},\langle \eta_{1},\thinspace  _{\sigma(A)'} \langle x_{1},x_{2}\rangle \cdot\eta_{2}\rangle _{\sigma(A)'} \cdot y_{2}\rangle _{\tau(A)'}\\
&= \langle y_{1},\langle \eta_{1}, \langle \widetilde{x_{1}},\widetilde{ x_{2}} \rangle  _{\sigma(A)'}\cdot\eta_{2}\rangle _{\sigma(A)'} \cdot y_{2}\rangle _{\tau(A)'}\\
&= \langle y_{1},\langle \widetilde{x_{1}} \otimes \eta_{1},\widetilde{x_{2}} \otimes \eta_{2}\rangle _{\sigma(A)'} \cdot y_{2}\rangle _{\tau(A)'}\\
&= \langle \widetilde{x_{1}}\otimes \eta_{1} \otimes y_{1},\widetilde{x_{2}}\otimes \eta_{2} \otimes y_{2}\rangle _{\tau(A)'}
\end{align*}
\endgroup

That is, $\varphi$ preserves the inner product. So it is isometric, hence injective with closed range. Now we show $\varphi$ is surjective. Each element $S\in E^{\tau}$ has $|L|$ nonzero blocks $S_{q}$, where $L$ is a set with $|G^{0}|\leq|L|\leq |G^{1}|$. Each $S_{q} \in B(K_{s(e_{q})},K_{r(e_{q})})$. For $Q \in B(K_{s(e_{q})},H_{s(e_{q})}), P\in B(H_{s(e_{q})},H_{r(e_{q})}), R\in B(H_{g},K_{g})$, $RPQ \in B(K_{s(e_{q})},K_{r(e_{q})})$. Let $M_{Q} \in X$ be an element with all zero blocks except for $Q$, $M_{P} \in E^{\sigma}$ be an element with all zero blocks except for $P$, and $M_{R} \in (I_{E} \otimes \til{X})$ be an element with all zero blocks except for $R$. Assume for the moment that all Hilbert spaces  $H_{j}$ and $K_{j}$ are finite dimensional. That is, the multiplicity of the representation of $\delta_{v}$ is finite for all $v\in G^{0}$. $S_{q}=\bigoplus\limits_{i,j}s_{ij}E_{ij}$, where $s_{ij} \in  \mathbb{C}$ and $\{E_{ij}\}_{i,j}$ is a matrix basis for $B(K_{s(e_{q})},K_{r(e_{q})})$. Let $\{T_{ij}\}_{i,j}$ be a matrix basis for $B(K_{s(e_{q})},H_{s(e_{q})})$, $\{Y_{ij}\}_{i,j}$ be a matrix basis for $B(H_{s(e_{q})},H_{r(e_{q})})$, and $\{Z_{ij}\}_{i,j}$ be a matrix basis for $B(H_{r(e_{q})},K_{r(e_{q})})$. Then $E_{ij}=Z_{ij}Y_{j1}T_{1j}$. So for $x, y \in X, \eta \in  E^{\sigma}$and $H, K$ finite dimensional, the products $(I_{E}\otimes \til{x})\eta y$ span $E^{\tau}$. If the representation of $\delta_{s(e_{q})}$ or $\delta_{r(e_{q})}$ on $K$ is not finite dimensional then since
 $B(K_{s(e_{q})},K_{r(e_{q})})=\mathbb{K}(K_{s(e_{q})},K_{r(e_{q})})^{**}$, the span of the finite dimensional products $E_{ij}=Z_{ij}Y_{j1}T_{1j}$ is $w^{*}$-dense in $B(K_{s(e_{q})},K_{r(e_{q})})$ (by Goldstine's theorem).
Summing over all $q\in L$, we have that the span of these finite dimensional products is $w^{*}$-dense in $E^{\tau}$. So $\varphi$ is surjective, 
 thus a $W^{*}$-correspondence isomorphism.

Since by \cref{mA}, $X \overbar{\otimes}_{\tau(A)'} \widetilde{X}\cong \sigma(A)'$ and $E^{\sigma}\cong \sigma(A)'\overbar{\otimes}_{\sigma(A)'}E^{\sigma}$ as $W^{*}$-correspondences, we have: 
\begin{align*}
E^{\sigma}\overbar{\otimes}_{\sigma(A)'}X 
\cong \sigma(A)'\overbar{\otimes}_{\sigma(A)'}E^{\sigma} \overbar{\otimes}_{\sigma(A)'}X
\cong X \overbar{\otimes}_{\tau(A)'} \widetilde{X}\overbar{\otimes}_{\sigma(A)'}E^{\sigma}\overbar{\otimes} _{\sigma(A)'}X
\cong X \overbar{\otimes}_{\tau(A)'}E^{\tau}
\end{align*}
as  $W^{*}$-correspondences. So $(E^{\sigma}$, $\sigma(A)')\stackrel{WME}{\sim}_{X} (E^{\tau}$, $\tau(A)')$ . 
\end{proof}
In \cite{Muhly1999}, Muhly and Solel defined the \emph{induced representations} $\rho$ of $H^{\infty}(E)$, which play a central role in the study of Hardy algebras. Indeed, these induced representations (in the sense of Rieffel \cite{R1974b}) appear in most of the work related to Hardy algebras. In \cite{Muhly2009}, Muhly and Solel showed how the commutant of $\rho(H^{\infty}(E))$ can be expressed in terms of induced representations of $H^{\infty}(E^{\sigma})$. More precisely, let $\sigma:A \to B(H)$ be a normal representation of $A$ on a Hilbert space $H$ and form the Hilbert space $\ms{F}(E) \otimes _{\sigma}H$. The induced covariant representation of $E$ determined by $\sigma$ is the representation $(T, \varphi _{\infty} \otimes I_{H})$ where $T: E \to B(\ms{F}(E) \overbar{\otimes} _{\sigma}H)$ is defined by $T(\xi)(\eta \otimes h)=(\xi \otimes \eta )\otimes h$ 
 for $\xi \in E$ and $\eta \otimes h \in \ms{F}(E) \overbar{\otimes} _{\sigma}H$. The representation of $H^{\infty}(E)$,  \emph{induced} by $\sigma$, denoted by $\sigma ^{ \ms{F}(E)}$, is the integrated form of $(T, \varphi _{\infty} \otimes I_{H})$. For $X \in H^{\infty}(E)$, $\sigma ^{ \ms{F}(E)}(X)$ is also written as $X \otimes I_{H}$. Define a map $U: \ms{F}(E^{\sigma})\overbar{\otimes} _{\iota} H \to \ms{F}(E)\overbar{\otimes} _{\iota} H$ (where $\iota$ denotes the identity representation of $\sigma(A)'$ in $B(H)$) by
\begin{align*}
U(\eta _{1} \otimes \eta _{2} \otimes \cdots \eta _{n} \otimes h)=(I_{E^{\otimes (n-1)}} \otimes \eta _{1})(I_{E^{\otimes (n-2)}} \otimes \eta _{2}) \cdots (I_{E}\otimes \eta _{n-1}) \eta _{n}h
\end{align*}
By \cite[Lemma 3.8]{Muhly2004a}, $U$ is a Hilbert space isometric isomorphism and by \cite[Theorem 3.9]{Muhly2004a}, the representation $\rho$ of $H^{\infty}(E^{\sigma})$ on $\ms{F}(E) \overbar{\otimes} _{\sigma}H$, defined by the formula
\begin{align*}
\psi (X)=U\iota ^{\ms{F}(E^{\sigma})}(X)U^{*}
\end{align*}
is an ultraweakly homeomorphic, completely isometric isomorphism from $H^{\infty}(E^{\sigma})$ onto $(\sigma ^{\ms{F}(E)}(H^{\infty}(E)))'$. Likewise, the map $\upsilon$, defined  by 
\begin{align*}
\upsilon (X)=U^{*}\sigma ^{\ms{F}(E)}(X)U
\end{align*}
is an ultraweakly homeomorphic, completely isometric isomorphism from $H^{\infty}(E)$ onto $(\iota ^{\ms{F}(E^{\sigma})}(H^{\infty}(E^{\sigma})))'$.
\begin{theorem}\label{moritainduced}
If ($E,A$) is a $W^{*}$-graph correspondence and $\sigma :A \to B(H)$, $\tau :A \to B(K)$ are faithful normal representations of $A$, then $(\sigma ^{\ms{F}(E)}(H^{\infty}(E)))'\stackrel{WME}{\sim}(\tau ^{\ms{F}(E)}(H^{\infty}(E)))'$.
\end{theorem}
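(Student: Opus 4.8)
The plan is to reduce the statement to the two theorems already established by passing through the commutant description of induced representations recalled immediately above this theorem. The chain of reasoning is: the commutant of the induced representation of $H^{\infty}(E)$ is completely isometrically and $w^{*}$-homeomorphically isomorphic to the Hardy algebra of the dual correspondence $E^{\sigma}$; the dual correspondences $(E^{\sigma},\sigma(A)')$ and $(E^{\tau},\tau(A)')$ are weakly Morita equivalent by \cref{moritadual}; and weak Morita equivalence of $W^{*}$-correspondences lifts to weak Morita equivalence of their Hardy algebras by \cref{Moritahardy}. Stitching these three facts together gives the theorem.

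First I would invoke \cref{moritadual} to record that $(E^{\sigma},\sigma(A)')\stackrel{WME}{\sim}_{X}(E^{\tau},\tau(A)')$, where $X=\bigoplus^{\infty}_{q}B(K_{q},H_{q})$ is the $\sigma(A)'$-$\tau(A)'$ equivalence bimodule constructed there. Since $(E^{\sigma},\sigma(A)')$ and $(E^{\tau},\tau(A)')$ are now known to be weakly Morita equivalent $W^{*}$-correspondences, I would apply \cref{Moritahardy} directly to this pair, concluding that their Hardy algebras $H^{\infty}(E^{\sigma})$ and $H^{\infty}(E^{\tau})$ are weakly Morita equivalent dual operator algebras in the sense of \cite{Blecher2008a}.

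Next I would transport this equivalence across the commutant isomorphisms recalled in the discussion preceding the theorem. By \cite[Theorem 3.9]{Muhly2004a}, the map $\psi(X)=U\iota^{\ms{F}(E^{\sigma})}(X)U^{*}$ is an ultraweakly homeomorphic, completely isometric isomorphism of $H^{\infty}(E^{\sigma})$ onto $(\sigma^{\ms{F}(E)}(H^{\infty}(E)))'$; the analogous map built from $\tau$ and its intertwining unitary identifies $H^{\infty}(E^{\tau})$ with $(\tau^{\ms{F}(E)}(H^{\infty}(E)))'$. Because a $w^{*}$-continuous completely isometric isomorphism of dual operator algebras carries a weak Morita context to a weak Morita context — one relabels the bimodules and pairings of the $6$-tuple along the isomorphisms on either side — the weak Morita equivalence of $H^{\infty}(E^{\sigma})$ and $H^{\infty}(E^{\tau})$ descends to a weak Morita equivalence of $(\sigma^{\ms{F}(E)}(H^{\infty}(E)))'$ and $(\tau^{\ms{F}(E)}(H^{\infty}(E)))'$, which is exactly the claim.

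The bulk of the work has already been done in \cref{moritadual} and \cref{Moritahardy}, so the two invocations are routine; the only point that merits explicit justification is the final transport step, namely that weak Morita equivalence is invariant under $w^{*}$-homeomorphic completely isometric isomorphism of dual operator algebras. I expect this to be the main (though mild) obstacle: it amounts to checking that the defining $6$-tuple $(A,B,X,Y,(\cdot,\cdot),[\cdot,\cdot])$, together with its $w^{*}$-dense strong subcontext, can be relabelled along $\psi$ and its $\tau$-counterpart while keeping the pairings completely contractive and separately $w^{*}$-continuous. Since $\psi$ and its $\tau$-counterpart are genuine isomorphisms of dual operator algebras, this relabelling is immediate and no new analytic input is required.
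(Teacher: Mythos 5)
Your proposal is correct and follows exactly the paper's own argument: invoke \cref{moritadual} to get $(E^{\sigma},\sigma(A)')\stackrel{WME}{\sim}(E^{\tau},\tau(A)')$, apply \cref{Moritahardy} to conclude $H^{\infty}(E^{\sigma})\stackrel{WME}{\sim}H^{\infty}(E^{\tau})$, and transport across the commutant isomorphisms $\psi$ and its $\tau$-counterpart. Your extra remark that the final step requires invariance of weak Morita equivalence under $w^{*}$-homeomorphic completely isometric isomorphism makes explicit a point the paper leaves implicit, but the route is the same.
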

\begin{proof}
By \cref{moritadual}, $(E^{\sigma}$, $\sigma(A)')\stackrel{WME}{\sim} (E^{\tau}$, $\tau(A)')$. Then by \cref{Moritahardy}, $ H^{\infty}(E^{\sigma}))\stackrel{WME}{\sim}H^{\infty}(E^{\tau})$. So  by the isomorphism $\psi$ above, we have
\begin{align*}
(\sigma ^{\ms{F}(E)}(H^{\infty}(E)))'\cong H^{\infty}(E^{\sigma}))\stackrel{WME}{\sim}H^{\infty}(E^{\tau})\cong (\tau ^{\ms{F}(E)}(H^{\infty}(E)))'
\end{align*}
\end{proof}
Note also that if ($E,A$) is a graph correspondence and $\sigma :A \to B(H)$, $\tau :A \to B(K)$ are faithful normal representations of $A$, then the map $\upsilon$ above, gives us
\begin{align*}
(\iota ^{\ms{F}(E^{\sigma})}(H^{\infty}(E^{\sigma})))'
\cong (\iota ^{\ms{F}(E^{\tau})}(H^{\infty}(E^{\tau})))'.
\end{align*}
\subsection{Morita Equivalence of $W^{*}$-Graph Correspondences}
Let $X$ be a countable set, $A=C(X)$ (with the sup norm) and let $_{C(X)}\mathfrak{X}_{C(X)}$ = $_{A}\mathfrak{X}_{A}$ be a $W^{*}$-equivalence bimodule. By \cite[Theorem 3.11]{Paschke1973} and Zorn's lemma, $\mathfrak{X}$ has an orthonormal basis $\mathcal{A}$ consisting of mutually orthogonal non zero partial isometries. That is, for each $e_{i}\in \mathcal{A}$, $\langle e_{i}, e_{i}\rangle$ is a nonzero orthogonal projection in $A$, and for each $g\in \mathfrak{X}$, $g=\sum\limits_{i} e_{i}\langle e_{i},g \rangle$. 
In particular, $\sum\limits_{i} \Uptheta _{e_{i}, e_{i}}=I_{\mathfrak{X}}$ where $\Uptheta _{e_{i}, e_{i}}$ is the usual rank-one operator in $\mathbb{K}(\mathfrak{X})$. The elements of $\mathcal{A}$ are linearly independent, otherwise there would be $e_{j} \in \mathcal{A}$ such that $e_{j}=\sum\limits_{i\neq j}z_{i}e_{i}$ ($z_{i} \in \mathbb{C}$). But then we would have $0< \langle e_{j}, e_{j} \rangle _{A}< \langle e_{j}, \sum\limits_{i\neq j}z_{i}e_{i} \rangle _{A}=\sum\limits_{i\neq j}z_{i}< \langle e_{j}, e_{i} \rangle _{A}= \sum\limits_{i\neq j}z_{i}(0)=0$. 

Since  $A= \ell^{\infty}$ can be identified with $C(\beta\mathbb{N})$, where $\beta\mathbb{N}$ denotes the Stone Cech compactification of $\mathbb{N}$, the maximal ideals of $C(X)$ are $\{I_{x}\}_{x\in X}$ where 
$I_{x}=\{\sum\limits_{y\in X}a_{y}\delta_{y} :a_{y}\in \mathbb{C}, \text{ }\text{sup}|a_{y}|< \infty \text{ }\text{and}\textbf{ } y\neq x\}$. The maximal $C(X)$-$C(X)$-submodules of $\mathfrak{X}$ are $\{\mathfrak{X}_{j}\}_{j\in \{1, \cdots, n\}}$ where $\mathfrak{X}_{j}=\{\sum\limits_{i}z_{i}e_{i}:z_{i}\in \mathbb{C},\text{ }\text{sup}|z_{i}|< \infty \text{ } \text{and}\text{ }i\neq j\}$. Since the Rieffel correspondence of $_A\mathfrak{X}_A$, pairs maximal ideals of $A=C(X)$ with maximal submodules of $\mathfrak{X}$, we have that dim($\mathfrak{X})=|\mathcal{A}|=$dim$(C(X))=|X|$. 

If the corresponding submodule (under the Rieffel correspondence) for the maximal ideal $I_{x}$ is the maximal submodule $\mathfrak{X}_{j}$, then $\mathfrak{X}_{j}=\mathfrak{X} \cdot I_{x}$ \cite[Lemma 3.23]{Raeburn1998}. So $e_{i}\cdot \delta_{y\neq x} \neq e_{j}$ for all $e_{i} \in \mathcal{A}$. But by Cohen's factorization theorem, $e_{j}=e\cdot a$ for some $e\in \mf{X}$, $a\in C(X)$. So we must have  $e_{i}\cdot \delta _{x}=e_{j}$ for some $e_{i}\in \ms{A}$. 
 Then we have $e_{i}=e_{j}$ (otherwise we would have $e_{i}\cdot \delta_{x}=e_{j}$ for $i\neq j$ and $0<\langle e_{j},e_{j}\rangle_{C(X)}=\langle e_{j},e_{i}\cdot \delta_{x}\rangle_{C(X)}=\langle e_{j},e_{i}\rangle_{C(X)}\cdot \delta_{x}=0\cdot \delta_{x}=0$. 
  Thus the element $x\in X$ (and therefore $\delta_{x}\in C(X)$) gets uniquely paired up with the element $e_{j}\in \ms{A}$. Likewise, each basis element $\delta_{y}\in C(X)$ gets uniquely paired up with a basis element $e_{i} \in \mf{X}$ by the right action relation $e_{i}\cdot \delta_{y}=e_{i}$. So we have a bijection $R$ between the basis elements $\{e_{i} \}$ in $\mf{X}$ and the basis elements $\{\delta_{y}\}$ in $C(X)$. Applying the same analysis to the Rieffel correspondence between $C(X)$ and $\mf{X}$, but now with $C(X)$ giving the left action on $\mf{X}$, we have another bijection $L:\{\delta_{y}\}_{y\in X} \to\ms{A}$. Thus $\sigma=R \circ L$ is a permutation of $\{\delta_{x}:x\in X\}$, 
or equivalently, $\sigma$ is  a permutation of $X$ given by the Rieffel correspondence of $_{C(X)}\mathfrak{X}_{C(X)}$. Note that $\delta_{y}\cdot L(\delta_{x})\cdot \delta _{w}=L(\delta _{x})$ if $y=x$ and $w=\sigma(x)$. If we let $_{A}{A_{\sigma}} _{A}$ denote the algebra $A$ (viewed as a bimodule over itself) with a modified right action and right inner product (given by $\sigma$), we obtain the following result:
\begin{lemma}\label{Asigma}
$_{C(X)}\mathfrak{X}_{C(X)}={_A\mathfrak{X}_A}$ is a $W^{*}$-equivalence bimodule if and only if $_A\mathfrak{X}_A$ is of the form $_{A}{A_{\sigma}} _{A}$, where $\sigma=R \circ L$ is the permutation given by the Rieffel correspondence of $_A\mathfrak{X}_A$.
\end{lemma}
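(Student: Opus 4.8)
The plan is to prove both implications separately, with the ``only if'' direction assembling the structural facts already established in the discussion preceding the lemma, and the converse being a short observation about twisting the trivial equivalence bimodule by a $*$-automorphism.

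For the ``only if'' direction, assume $_A\mf{X}_A$ is a $W^{*}$-equivalence bimodule. The preceding discussion supplies an orthonormal basis $\mathcal{A}=\{e_{i}\}$ of mutually orthogonal nonzero partial isometries, the bijections $R$ and $L$, and the permutation $\sigma=R\circ L$. I would index the basis by $X$ itself, setting $e_{x}:=L(\delta_{x})$, so that by the support relation $\delta_{y}\cdot L(\delta_{x})\cdot\delta_{w}=L(\delta_{x})$ (for $y=x$, $w=\sigma(x)$) each $e_{x}$ has left support $\delta_{x}$ and right support $\delta_{\sigma(x)}$, i.e.\ $\delta_{x}\cdot e_{x}=e_{x}$ and $e_{x}\cdot\delta_{\sigma(x)}=e_{x}$.

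The key computation is to pin down the inner products on basis elements. Since $A=C(X)$ is abelian and each $\delta_{x}$ is a minimal projection, the relation $e_{x}\cdot\delta_{\sigma(x)}=e_{x}$ forces $\langle e_{x},e_{x}\rangle_{A}\leq\delta_{\sigma(x)}$; being a nonzero projection below a minimal projection, it must equal $\delta_{\sigma(x)}$. Together with orthogonality of the basis this gives $\langle e_{x},e_{y}\rangle_{A}=\delta_{x,y}\,\delta_{\sigma(x)}$ and, symmetrically, $_{A}\langle e_{x},e_{y}\rangle=\delta_{x,y}\,\delta_{x}$; the compatibility axiom $_{A}\langle e_{x},e_{x}\rangle\cdot e_{x}=e_{x}=e_{x}\cdot\langle e_{x},e_{x}\rangle_{A}$ is precisely what ties the left support $x$ to the right support $\sigma(x)$. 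With these formulas in hand, I would define $\Phi\colon {}_{A}A_{\sigma}{}_{A}\to\mf{X}$ by $\delta_{x}\mapsto e_{x}$, extended ultraweakly, and verify it is a $W^{*}$-equivalence bimodule isomorphism with identity automorphisms on both copies of $A$. Under the left action of $A_{\sigma}$ (ordinary multiplication) and its $\sigma$-twisted right action and right inner product, the identities $a\cdot e_{x}=a_{x}e_{x}$, $e_{x}\cdot a=a_{\sigma(x)}e_{x}$, $\langle e_{x},e_{y}\rangle_{A}=\delta_{x,y}\delta_{\sigma(x)}$ and $_{A}\langle e_{x},e_{y}\rangle=\delta_{x,y}\delta_{x}$ match the actions and both inner products of $A_{\sigma}$ exactly; surjectivity and weak$^{*}$-continuity follow from the basis expansion $g=\sum_{i}e_{i}\langle e_{i},g\rangle$ and selfduality, so $\mf{X}\cong{}_{A}A_{\sigma}{}_{A}$.

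For the converse, observe that $\sigma$ induces a normal $*$-automorphism $\alpha_{\sigma}$ of $A$ (given by $\alpha_{\sigma}(a)=a\circ\sigma$), and that $_{A}A_{\sigma}{}_{A}$ is obtained from the standard equivalence bimodule $_{A}A_{A}$ (with $\langle a,b\rangle_{A}=a^{*}b$ and $_{A}\langle a,b\rangle=ab^{*}$) by composing the right action and the right inner product with $\alpha_{\sigma}$; since composing with a $*$-automorphism preserves fullness on both sides, the compatibility of the two inner products, and selfduality, $_{A}A_{\sigma}{}_{A}$ is again a $W^{*}$-equivalence bimodule. The main obstacle I anticipate lies in the inner-product computation: carefully justifying that $\langle e_{x},e_{x}\rangle_{A}$ is exactly the minimal projection $\delta_{\sigma(x)}$ rather than some larger projection, and that the two one-sided supports are forced to be linked through the compatibility axiom. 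Once the inner products are diagonalized in this form, both the construction of $\Phi$ and the converse are routine verifications.
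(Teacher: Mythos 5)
Your proposal is correct and follows essentially the same route as the paper: the isomorphism realizing $\mathfrak{X}\cong{_{A}{A_{\sigma}}_{A}}$ is the linear extension of $L$ (your $\Phi$ is the paper's $\psi$), and the inner products are pinned down on the partial-isometry basis exactly as in the paper, where the support relation $e_{j}\cdot\delta_{x}=e_{j}$ forces $\langle e_{j},e_{j}\rangle_{C(X)}=\delta_{x}$ because that inner product is a nonzero positive element supported at the single point $x$. The only difference is cosmetic: for the converse you invoke the general fact that twisting $_{A}A_{A}$ by the normal $*$-automorphism induced by $\sigma$ preserves the equivalence-bimodule axioms, whereas the paper verifies fullness, adjointability of the actions, compatibility of the two inner products, and selfduality for $_{A}{A_{\sigma}}_{A}$ directly.
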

\begin{proof}
Let $_{A}{A_{\sigma}} _{A}$ denote the algebra $A$ (viewed as a $W^{*}$-bimodule over itself) with the right and left actions given by:
\begin{align*}
\delta _{x}\cdot \delta _{y}=
\begin{cases} 
\delta _{x} & \text{if } y=\sigma (x) \\
0 & \text{otherwise} 
\end{cases}
\qquad \qquad
\delta _{y}\cdot \delta _{x}=
\begin{cases} 
\delta _{x} & \text{if } y=x\\
0 & \text{otherwise} 
\end{cases}
\end{align*}
and the right and left inner products given by:
\begin{align*}
\langle\delta _{x}, \delta _{y}\rangle _{A}=
\begin{cases} 
\delta _{\sigma (x)} & \text{if } x=y \\
0 & \text{otherwise} 
\end{cases}
\qquad \qquad
_{A}\langle\delta _{x}, \delta _{y}\rangle=
\begin{cases} 
\delta _{x} & \text{if } x=y\\
0 & \text{otherwise} 
\end{cases}
\end{align*}
So that if $\sum\limits_{x\in X}a_{x}\delta_{x}, \sum\limits_{x\in X}c_{x}\delta_{x}\in A_{\sigma}$ and $\sum\limits_{y\in X}b_{y}\delta_{y}\in A$, then the right and left actions and inner products are:
\begin{gather*} 
\sum\limits_{x\in X}a_{x}\delta_{x}\cdot \sum\limits_{y\in X}b_{y}\delta_{y}=\sum\limits_{x\in X}a_{x}b_{\sigma(x)}\delta_{x}\\
\sum\limits_{y\in X}b_{y}\delta_{y}\cdot \sum\limits_{x\in X}a_{x}\delta_{x}=\sum\limits_{x\in X}a_{x}b_{x}\delta_{x}\\
\langle\sum\limits_{x\in X}a_{x}\delta_{x}, \sum\limits_{y\in X}c_{y}\delta_{y}\rangle _{A}=\sum\limits_{x\in X}\overline{a_{x}}c_{x}\delta_{\sigma(x)}=\sum\limits_{x\in X}\overline{a_{\sigma ^{-1} (x)}}c_{\sigma ^{-1}(x)}\delta_{x}\\
_{A}\langle\sum\limits_{x\in X}a_{x}\delta_{x}, \sum\limits_{y\in X}c_{y}\delta_{y}\rangle=\sum\limits_{x\in X}{a_{x}}\overline{c_{x}}\delta_{x}
\end{gather*}
First we check that  $_{A}{A_{\sigma}} _{A}$ is a $W^{*}$-equivalence bimodule:

Since $\{\delta_{x}:x\in X\}$ spans $C(X)=A$ and $\langle\delta_{\sigma ^{-1}(x)}, \delta_{\sigma ^{-1}(x)}\rangle _{C(X)}=\delta_{x}$ and  $_{C(X)}\langle\delta_{x}, \delta_{x}\rangle=\delta_{x}$, we have that $A_{\sigma}$ is a $w^{*}$-full left Hilbert $A$-module and a $w^{*}$-full right Hilbert $A$-module.

Let $s,t \in A_{\sigma}$, $a\in A$.
\begin{align*}
\langle a\cdot s, t\rangle _{A} &= \langle (\sum\limits_{y\in X}a_{y}\delta_{y})\cdot (\sum\limits_{x\in X}z_{x}\delta_{x}), \sum\limits_{w\in X}t_{w}\delta_{w}\rangle _{A}\\
&= \langle \sum\limits_{x\in X}a_{x}z_{x}\delta_{x}, \sum\limits_{w\in X}t_{w}\delta_{w}\rangle _{A}\\
&= \sum\limits_{x\in X}\overline{a_{x}z_{x}}t_{x}\delta_{\sigma(x)}\\
&= \langle \sum\limits_{x\in X}z_{x}\delta_{x}, \sum\limits_{w\in X}\overline{a_{w}}t_{w}\delta_{w}\rangle _{A}\\
&= \langle \sum\limits_{x\in X}z_{x}\delta_{x}, (\sum\limits_{y\in X}\overline{a_{y}}\delta_{y})\cdot (\sum\limits_{w\in X}t_{w}\delta_{w})\rangle _{A}\\
&= \langle s, a^{*}\cdot t\rangle _{A}
\end{align*}
\begingroup
\allowdisplaybreaks
\begin{align*}
_{A}\langle s\cdot a, t\rangle &=_{A}\langle (\sum\limits_{x\in X}z_{x}\delta_{x})\cdot (\sum\limits_{y\in X}a_{y}\delta_{y}), \sum\limits_{w\in X}t_{w}\delta_{w}\rangle\\
&= _{A}\langle \sum\limits_{x\in X}a_{\sigma(x)}z_{x}\delta_{x}, \sum\limits_{w\in X}t_{w}\delta_{w}\rangle \\
&= \sum\limits_{x\in X}a_{\sigma(x)}z_{x}\overline{t_{x}}\delta_{x}\\
&= _{A}\langle \sum\limits_{x\in X}z_{x}\delta_{x}, \sum\limits_{w\in X}\overline{a_{\sigma(w)}}t_{w}\delta_{w}\rangle \\
&= _{A}\langle \sum\limits_{x\in X}z_{x}\delta_{x}, (\sum\limits_{w\in X}t_{w}\delta_{w})\cdot (\sum\limits_{y\in X}\overline{a_{y}}\delta_{y})\rangle \\
&= _{A}\langle s,t\cdot a^{*}\rangle
\end{align*}
\endgroup
Let $r,s,t\in A_{\sigma}$.
\begingroup
\allowdisplaybreaks
\begin{align*}
_{A}\langle r,s\rangle \cdot t &=_{A}\langle \sum\limits_{y\in X}r_{y}\delta_{y}, \sum\limits_{x\in X}z_{x}\delta_{x}\rangle\cdot \sum\limits_{w\in X}t_{w}\delta_{w}\\
&= (\sum\limits_{x\in X}r_{x}\overline{z_{x}}\delta_{x})\cdot \sum\limits_{w\in X}t_{w}\delta_{w} \\
&= \sum\limits_{x\in X}r_{x}\overline{z_{x}}t_{x}\delta_{x}\\
&= \sum\limits_{y\in X}r_{y}\delta_{y}\cdot (\sum\limits_{x\in X}\overline{z_{x}}t_{x}\delta_{\sigma(x)}) \\
&= (\sum\limits_{y\in X}r_{y}\delta_{y})\cdot \langle\sum\limits_{x\in X}z_{x}\delta_{x}, \sum\limits_{w\in X}t_{w}\delta_{w}\rangle _{A} \\
&= r\cdot\langle s,t\rangle _{A}
\end{align*}
\endgroup
Since $A$ has an operator space predual (being a $W^{*}$-algebra), it is a selfdual $C^{*}$-module over itself.
Thus $_{A}{A_{\sigma}} _{A}$ is a $W^{*}$-equivalence bimodule. 

Let $\psi :{_{A}{A_{\sigma}} _{A}}\to {_{A}\mf{X}_{A}}$ be the linear extension of the bijection $L:\{\delta_{y}\}_{y\in X} \to\ms{A}$ that we encountered above when we studied the Rieffel correspondence of $_{A}\mf{X} _{A}$. For any element $e=\sum\limits_{x\in X}z_{x}\delta_{x}\in A_{\sigma}$, we have $\psi(e)=\psi (\sum\limits_{x\in X}z_{x}\delta_{x})=\sum\limits_{x\in X}z_{x}L(\delta_{x})$. We show now that $\psi$ is a $W^{*}$-equivalence bimodule isomorphism. 
Recall that $\delta_{y}\cdot L(\delta_{x})\cdot \delta _{w}=L(\delta _{x})$ in $ _{A}\mf{X} _{A}$ if $y=x$ and $w=\sigma(x)$. Let $a,b \in A$ and $e\in A_{\sigma}$. Then
\begingroup
\allowdisplaybreaks
\begin{align*}
\psi (a\cdot e \cdot b)&= \psi(\sum\limits_{y\in X}a_{y}\delta_{y}\cdot \sum\limits_{x\in X}z_{x}\delta_{x}\cdot \sum\limits_{w\in X}b_{w}\delta_{w})\\
&=\psi(\sum\limits_{x\in X}a_{x}z_{x}b_{\sigma(x)}\delta_{x})= \sum\limits_{x\in X}\psi (a_{x}z_{x}b_{\sigma(x)}\delta_{x}) \\
&= \sum\limits_{x\in X}a_{x}z_{x}b_{\sigma(x)}L(\delta_{x})\\
&= \sum\limits_{y\in X}a_{y}\delta_{y}\cdot \sum\limits_{x\in X}z_{x}L(\delta_{x})\cdot \sum\limits_{w\in X}b_{w}\delta_{w} \\
&= (\sum\limits_{y\in X}a_{y}\delta_{y})\cdot \psi (\sum\limits_{x\in X}z_{x}\delta_{x})\cdot \sum\limits_{w\in X}b_{w}\delta_{w} \\
&= a\cdot\psi (e)\cdot b
\end{align*}
\endgroup
So $\psi$ is a bimodele map. Note that if the Rieffel correspondence pairs up $e_{j}\in \mf{X}$ and $\delta_{x} \in C(X)$ by $e_{j}\cdot \delta_{x}=e_{j}$, then since $1=||\langle e_{j}, e_{j} \rangle_{C(X)}||=||\langle e_{j} \cdot\delta_{x}, e_{j} \cdot\delta_{x} \rangle_{C(X)}||=||\delta_{x} \langle e_{j}, e_{j}\rangle_{C(X)}\delta_{x}||=||\delta_{x} \langle e_{j}, e_{j}\rangle_{C(X)}||$ and $\delta _{x}\delta _{y}=0$ for all $x\neq y$, we must have $ \langle e_{j}, e_{j}\rangle_{C(X)}=\delta_{x}=R(e_{j})$. Likewise, for any $e_{i}\in \mf{X}$ and $\delta_{x}\in C(X)$ paired up by $\delta_{x} \cdot e_{j}=e_{j}$, we must have  $_{C(X)} \langle e_{j}, e_{j}\rangle=\delta_{x}.$ Thus
{%
\abovedisplayskip=0pt \belowdisplayskip=0pt%
\begin{align*}
\langle L(\delta_{x}), L(\delta_{x})\rangle_{C(X)}=R(L(\delta_{x}))=\sigma(\delta_{x})=\delta_{\sigma(x)} \quad \text{and} \quad _{C(X)}\langle L(\delta_{x}), L(\delta_{x})\rangle &=L^{-1}(L(\delta_{x}))\\
&=\delta_{x}
\end{align*}
}%
So if $e, f \in A_{\sigma}$, we have
\begingroup
\allowdisplaybreaks
{%
\abovedisplayskip=0pt \belowdisplayskip=0pt%
\begin{align*}
\langle e, f \rangle_{A} &= \langle \sum\limits_{x\in X}z_{x}\delta_{x}, \sum\limits_{y\in X}z_{y}\delta_{y}\rangle _{A}\\
&= \sum\limits_{x\in X}\overline{z_{x}}z_{x}\delta_{\sigma(x)}\\
&= \sum\limits_{x\in X}\overline{z_{x}}z_{x}\langle  L(\delta_{x}), L(\delta_{x})\rangle _{A}\\
&= \langle \sum\limits_{x\in X}z_{x}L(\delta_{x}), \sum\limits_{y\in X}z_{y}L(\delta_{y})\rangle _{A}\\
&= \langle \psi (\sum\limits_{x\in X}z_{x}\delta_{x}), \psi(\sum\limits_{y\in X}z_{y}\delta_{y}) \rangle _{A}\\
&= \langle \psi(e), \psi(f) \rangle _{A}
\end{align*}
}%
\endgroup
\begingroup
\allowdisplaybreaks
{%
\abovedisplayskip=0pt \belowdisplayskip=0pt%
\begin{align*}
_{A}\langle e, f \rangle &=_{A}\langle \sum\limits_{x\in X}z_{x}\delta_{x}, \sum\limits_{y\in X}z_{y}\delta_{y}\rangle \\
&= \sum\limits_{x\in X}z_{x}\overline{z_{x}}\delta_{x} \\
&= _{A}\langle\sum\limits_{x\in X}z_{x}L(\delta_{x}), \sum\limits_{y\in X}z_{y}L(\delta_{y}) \rangle \\
&= _{A}\langle \psi (\sum\limits_{x\in X}z_{x}\delta_{x}), \psi(\sum\limits_{y\in X}z_{y}\delta_{y}) \rangle _{A}\\
&= _{A}\langle \psi(e), \psi(f) \rangle.
\end{align*}
}%
\endgroup
Thus $\psi$ preserves both inner products (so it is injective). Since $\mathcal{A}$ spans $\mf{X}$, and $\psi(L^{-1}(e_{i}))=L((L^{-1}(e_{i}))=e_{i}$, $\psi$ is surjective. Thus an isomorphism.
\end{proof}
We can also view $_{A}{A_{\sigma}} _{A}$ as a graph correspondence. More precisely, let $G_{\sigma}=(G_{\sigma}^{0}, G_{\sigma}^{1}, r, s)$ be the directed graph given by $G_{\sigma}^{0}=X$, $G_{\sigma}^{1}=\{e_{x}\}_{x\in X}$, $r,s: G_{\sigma}^{1}\to G_{\sigma}^{0}$ given by $r(e_{x})=x$ and $s(e_{x})=\sigma(x)$. Then the graph correspondence  $_{C(G^{0}_{\sigma})}{C(G^{1}_{\sigma}})_{C(G^{0}_{\sigma})}$ associated to $G_{\sigma}$ with the usual actions and inner products:
\begingroup
\allowdisplaybreaks
{%
\abovedisplayskip=0pt \belowdisplayskip=0pt%
\begin{gather*} 
\sum\limits_{x\in X}a_{x}\delta_{e_{x}}\cdot \sum\limits_{y\in X}b_{y}\delta_{y}=\sum\limits_{x\in X}a_{x}b_{\sigma(x)}\delta_{e_{x}}\\
\sum\limits_{y\in X}b_{y}\delta_{y}\cdot \sum\limits_{x\in X}a_{x}\delta_{e_{x}}=\sum\limits_{x\in X}a_{x}b_{x}\delta_{e_{x}}\\
\langle\sum\limits_{x\in X}a_{x}\delta_{e_{x}}, \sum\limits_{y\in X}c_{y}\delta_{e_{y}}\rangle _{C(X)}=\sum\limits_{x\in X}\overline{a_{x}}c_{x}\delta_{\sigma(x)}=\sum\limits_{x\in X}\overline{a_{\sigma ^{-1} (x)}}c_{\sigma ^{-1}(x)}\delta_{x}\\
_{C(X)}\langle\sum\limits_{x\in X}a_{x}\delta_{e_{x}}, \sum\limits_{y\in X}c_{y}\delta_{e_{y}}\rangle=\sum\limits_{x\in X}{a_{x}}\overline{c_{x}}\delta_{x}
\end{gather*}
}%
\endgroup
is isomorphic to $_{A}{A_{\sigma}} _{A}\cong {_{A}\mf{X} _{A}}$ via the map $\omega:C(G_{\sigma}^{1})\to A_{\sigma}$ given by $\omega(\delta_{e_{x}})=\delta_{x}$.
Note that $|G_{\sigma}^{1}|=|G_{\sigma}^{0}|$ and $r,s$ are bijections. So if the graph $G_{\sigma}$ is finite then $G_{\sigma}$ is either a cycle or a disconnected union of cycles (given by the cycle decomposition of $\sigma$). Note also that each permutation $\sigma$ of $X$ gives an equivalence bimodule $_{A}{C(G^{1}_{\sigma}})_{A} \cong {_{A}{A_{\sigma}} _{A}}$.

If $[\mf{X}]$ denotes the isomorphism class of $_{A}\mf{X}_{A}$, then we have:
\begin{lemma} \label{Pgroup}
$P=\{[\mf{X}] : A\stackrel{WME}{\sim}_{\mf{X}}A\}$ is a group with the operation given by $[\mf{X}] *[\mf{Y}] =[\mf{X} \overbar{\otimes}_{A}\mf{Y}] $
\end{lemma}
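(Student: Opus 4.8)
The plan is to recognize $P$ as the (weak) Picard group of the $W^{*}$-algebra $A$ and to verify the four group axioms using the functoriality and associativity of the composition tensor product together with \cref{mA}. Throughout, $[\mathfrak{X}]$ denotes the $W^{*}$-equivalence bimodule isomorphism class of an $A$-$A$ $W^{*}$-equivalence bimodule $\mathfrak{X}$, and the relevant notion of isomorphism is that of $W^{*}$-equivalence bimodule isomorphism.

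First I would check that $*$ is well defined and that $P$ is closed under it. Closure is the transitivity of weak Morita equivalence: if $A\stackrel{WME}{\sim}_{\mathfrak{X}}A$ and $A\stackrel{WME}{\sim}_{\mathfrak{Y}}A$, then $\mathfrak{X}\overbar{\otimes}_{A}\mathfrak{Y}$ is again a nondegenerate $A$-$A$ bimodule that is $w^{*}$-full over $A$ on both sides and whose two inner products remain compatible, hence an $A$-$A$ $W^{*}$-equivalence bimodule, so $[\mathfrak{X}\overbar{\otimes}_{A}\mathfrak{Y}]\in P$. Well-definedness on classes follows from the functoriality of $\overbar{\otimes}_{A}$ recorded in Section~\ref{preliminaries}: a pair of equivalence-bimodule isomorphisms $\mathfrak{X}\cong\mathfrak{X}'$ and $\mathfrak{Y}\cong\mathfrak{Y}'$ induces an equivalence-bimodule isomorphism $\mathfrak{X}\overbar{\otimes}_{A}\mathfrak{Y}\cong\mathfrak{X}'\overbar{\otimes}_{A}\mathfrak{Y}'$.

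Associativity of $*$ is immediate from the associativity of the $W^{*}$-module tensor product stated in Section~\ref{preliminaries}: the canonical isomorphism $(\mathfrak{X}\overbar{\otimes}_{A}\mathfrak{Y})\overbar{\otimes}_{A}\mathfrak{Z}\cong\mathfrak{X}\overbar{\otimes}_{A}(\mathfrak{Y}\overbar{\otimes}_{A}\mathfrak{Z})$ is an equivalence-bimodule isomorphism, so $([\mathfrak{X}]*[\mathfrak{Y}])*[\mathfrak{Z}]=[\mathfrak{X}]*([\mathfrak{Y}]*[\mathfrak{Z}])$. For the identity I would take $[A]$, where $A$ is viewed as the trivial $A$-$A$ equivalence bimodule with inner products $\langle a,b\rangle_{A}=a^{*}b$ and $_{A}\langle a,b\rangle=ab^{*}$; these are compatible, so $A\stackrel{WME}{\sim}_{A}A$ and $[A]\in P$, and the canonical maps $a\otimes x\mapsto a\cdot x$ and $x\otimes a\mapsto x\cdot a$ give equivalence-bimodule isomorphisms $A\overbar{\otimes}_{A}\mathfrak{X}\cong\mathfrak{X}\cong\mathfrak{X}\overbar{\otimes}_{A}A$, whence $[A]*[\mathfrak{X}]=[\mathfrak{X}]=[\mathfrak{X}]*[A]$.

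Finally, for inverses I would take $[\widetilde{\mathfrak{X}}]$, noting first that the conjugate $\widetilde{\mathfrak{X}}$ of an $A$-$A$ $W^{*}$-equivalence bimodule is again an $A$-$A$ $W^{*}$-equivalence bimodule, so $[\widetilde{\mathfrak{X}}]\in P$. Applying \cref{mA} with $B=A$ then supplies the maps $m_{A}\colon\mathfrak{X}\overbar{\otimes}_{A}\widetilde{\mathfrak{X}}\to A$ and $m_{B}\colon\widetilde{\mathfrak{X}}\overbar{\otimes}_{A}\mathfrak{X}\to A$; since the computations in its proof show that $m_{A}$ (and likewise $m_{B}$) preserves both the left and the right inner product and intertwines both module actions over the identity automorphism $I_{A}$, each is in fact an equivalence-bimodule isomorphism. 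Hence $[\mathfrak{X}]*[\widetilde{\mathfrak{X}}]=[A]=[\widetilde{\mathfrak{X}}]*[\mathfrak{X}]$, and $P$ is a group. I expect the only genuine subtlety to be exactly this last point, namely confirming that the maps of \cref{mA}, stated there as $W^{*}$-correspondence isomorphisms, are genuinely $W^{*}$-equivalence bimodule isomorphisms so that $\widetilde{\mathfrak{X}}$ is a two-sided inverse, together with the closure step, i.e. that compatibility of the two inner products is inherited by the composition tensor product. As an alternative route, \cref{Asigma} identifies every class with some $[A_{\sigma}]$ and reduces the entire statement to the computation that $A_{\sigma}\overbar{\otimes}_{A}A_{\tau}$ is again of the form $A_{\rho}$ for the permutation $\rho$ obtained by composing $\sigma$ and $\tau$, exhibiting an explicit group isomorphism $P\cong\mathrm{Sym}(X)$.
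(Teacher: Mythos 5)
Your main argument is correct and is essentially the paper's own proof: closure is verified by checking that $\mf{X}\overbar{\otimes}_{A}\mf{Y}$ is again $w^{*}$-full on both sides with adjointable actions and compatible inner products, the identity is $[A]$, and inverses come from \cref{mA} applied with $B=A$ (whose computations, as you note, show that $m_{A}$ and $m_{B}$ preserve both inner products and hence are genuine equivalence-bimodule isomorphisms, not merely correspondence isomorphisms). The extra points you record --- well-definedness of $*$ on isomorphism classes and associativity via the canonical isomorphism of iterated tensor products --- are standard and only make explicit what the paper leaves implicit.

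One caveat about your closing ``alternative route'': its conclusion is inconsistent with the paper. Under the paper's definition of $W^{*}$-equivalence bimodule isomorphism the two algebra components of the triple are allowed to be arbitrary automorphisms, and \cref{justA} shows that with this convention all the bimodules $A_{\sigma}$, $\sigma\in S_{X}$, are mutually isomorphic; the theorem following \cref{justA} then gives $P=\{[A]\}$, the trivial group, rather than $P\cong S_{X}$. The identification of $P$ with $S_{X}$ would require rigidifying the notion of isomorphism so that the algebra maps are the identity, which is not the convention you adopt (correctly) in the main body of your proof. Since this aside is not load-bearing, the proof stands once it is deleted.
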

\begin{proof}
First we show that if $\mf{X}$ and $\mf{Y}$ are $A$-$A$ $W^{*}$-equivalence bimodules, then so is $\mf{X} \overbar{\otimes}_{A}\mf{Y}$. $\mf{X}$ is a $w^{*}$-full right $A$-module, and by Cohen's factorization theorem, $\mf{Y}=A\cdot \mf{Y}$. Thus $\langle \mf{X}, \mf{X} \rangle_{A} \cdot \mf{Y}$ is $w^{*}$-dense in $\mf{Y}$. So $\langle \mf{X} \overbar{\otimes}_{A} \mf{Y}, \mf{X} \overbar{\otimes}_{A} \mf{Y} \rangle_{A}=\langle \mf{Y}, \langle \mf{X}, \mf{X}\rangle_{A}\cdot \mf{Y}\rangle_{A} $ is $w^{*}$-dense in $\langle \mf{Y}, \mf{Y}\rangle_{A}$, which is $w^{*}$-dense in $A$, since $\mf{Y}$ is a $w^{*}$-full right $A$-module. So $\langle \mf{X} \overbar{\otimes}_{A} \mf{Y}, \mf{X} \overbar{\otimes}_{A} \mf{Y} \rangle_{A}$ is $w^{*}$-dense in $A$. Thus $_{A}\mf{X} \overbar{\otimes}_{A}\mf{Y}_{A}$ is a full  right Hilbert $A$-module.  Likewise, $_{A}\mf{X} \overbar{\otimes}_{A}\mf{Y}_{A}$ is a full  left Hilbert $A$-module.

Let $x,y\in _{A}\mf{X} \overbar{\otimes}_{A}\mf{Y}_{A}$ and $a,b\in A$. Then $\langle a\cdot x,y\rangle _{A}=\langle a\cdot (x_{1}\otimes y_{1}),y_{1}\otimes y_{2}\rangle _{A}=\langle a\cdot x_{1}\otimes y_{1},y_{1}\otimes y_{2}\rangle _{A}=\langle y_{1}, \langle a\cdot x_{1},x_{2}\rangle _{A} \cdot y_{2} \rangle _{A}=\langle y_{1}, \langle x_{1},a^{*}\cdot x_{2}\rangle _{A} \cdot y_{2} \rangle _{A}=\langle x_{1}\otimes y_{1}, a^{*}\cdot x_{2} \otimes y_{2}\rangle _{A}=\langle x_{1}\otimes y_{1}, a^{*}\cdot (x_{2} \otimes y_{2}\rangle _{A})=\langle x, a^{*}\cdot y \rangle_{A}$ and $
_{A}\langle x\cdot b,y\rangle= _{A}\langle(x_{1}\otimes y_{1})\cdot b,x_{2}\otimes y_{2}\rangle =_{A}\langle x_{1}\otimes y_{1}\cdot b,x_{2}\otimes y_{2}\rangle=_{A}\langle x_{1},x_{2}\cdot _{A} \langle y_{1} \cdot b, y_{2}\rangle ^{*} \rangle=_{A}\langle x_{1},x_{2}\cdot _{A} \langle y_{1}, y_{2}\cdot b^{*}\rangle ^{*} \rangle=_{A}\langle x_{1}\otimes y_{1},x_{2}\otimes y_{2}\cdot b^{*}\rangle =_{A}\langle x_{1}\otimes y_{1},(x_{2}\otimes y_{2})\cdot b^{*}\rangle=_{A}\langle x,y\cdot b^{*}\rangle$.

Let  $x,y, z\in _{A}\mf{X} \overbar{\otimes}_{A}\mf{Y}_{A}$. Then $_{A}\langle x,y\rangle\cdot z=_{A}\langle x_{1}\otimes y_{1},x_{2}\otimes y_{2}\rangle \cdot x_{3}\otimes y_{3}=_{A}\langle x_{1}\cdot _{A}\langle y_{1}, y_{2}\rangle, x_{2}\rangle \cdot x_{3}\otimes y_{3}=x_{1} \cdot _{A}\langle y_{1}, y_{2}\rangle\cdot \langle x_{2}, x_{3}\rangle _{A} \otimes y_{3}= x_{1}\otimes _{A}\langle y_{1}, y_{2}\rangle \langle x_{2}, x_{3}\rangle _{A} \cdot y_{3}=x_{1}\otimes _{A}\langle y_{1},\langle x_{2}, x_{3}\rangle^{*}_{A}\cdot y_{2}\rangle \cdot y_{3}=x_{1}\otimes y_{1} \cdot \langle \langle x_{2}, x_{3}\rangle^{*}_{A}\cdot y_{2}, y_{3}\rangle=x_{1}\otimes y_{1} \cdot \langle y_{2},  \langle x_{2}, x_{3}\rangle_{A}\cdot y_{3}\rangle _{A}=x_{1}\otimes y_{1} \cdot \langle x_{2}\otimes y_{2},   x_{3}\otimes y_{3}\rangle_{A}=x\cdot \langle y, z  \rangle_{A} $.

Thus $_{A}\mf{X} \overbar{\otimes}_{A}\mf{Y}_{A}$ is a $W^{*}$-equivalence bimodule. Since $_{A}A\overbar{\otimes}_{A} E_{A} \cong _{A} E_{A}\cong _{A} E\overbar{\otimes}_{A} A_{A}$, the identity of $P$ is $[A]$. By \cref{mA}, $[\mf{X}]^{-1}=[\widetilde{\mf{X}}]$. Thus $P$ is a group.
\end{proof}
\begin{lemma}\label{justA}
If $\sigma , \tau \in S_{X}$ then $_{A}{A_{\sigma}} _{A} \cong {_{A}{A_{\tau}} _{A}}$ as $W^{*}$-equivalence bimodules.
\end{lemma}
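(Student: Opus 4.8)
The plan is to exhibit an explicit $W^{*}$-equivalence bimodule isomorphism $(\alpha, \phi, \beta)$ from ${}_{A}A_{\sigma}{}_{A}$ to ${}_{A}A_{\tau}{}_{A}$, in the sense of the triple defined in Section~\ref{preliminaries}. The crucial observation, read off directly from the formulas in \cref{Asigma}, is that in both ${}_{A}A_{\sigma}{}_{A}$ and ${}_{A}A_{\tau}{}_{A}$ the \emph{left} action and \emph{left} inner product are given by the identical formulas $\delta_{y}\cdot\delta_{x}=\delta_{x}$ (when $y=x$) and ${}_{A}\langle\delta_{x},\delta_{y}\rangle=\delta_{x}$ (when $x=y$), independent of the permutation; only the \emph{right} action and \emph{right} inner product carry the twist by $\sigma$ (respectively $\tau$). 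Accordingly, I would take the left algebra isomorphism to be the identity $\alpha=\mathrm{id}_{A}$, take $\phi\colon A_{\sigma}\to A_{\tau}$ to be the identity map on the common underlying vector space $A$, and absorb the entire discrepancy between the two right structures into a single automorphism $\beta$ of $A$ acting on the right.

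The automorphisms of $A=C(X)=\ell^{\infty}_{|X|}$ that I need are the coordinate permutations: for $\rho\in S_{X}$ the assignment $\beta_{\rho}(\delta_{x})=\delta_{\rho(x)}$ extends to a $*$-automorphism of $A$ that merely permutes the atoms $\delta_{x}$, hence is $w^{*}$-continuous and so a genuine $W^{*}$-algebra isomorphism. Matching the two right actions pins down the correct choice: writing $\phi(\delta_{x}\cdot\delta_{y})=\phi(\delta_{x})\cdot\beta(\delta_{y})$ on point masses, the condition becomes $y=\sigma(x)\iff \beta_{0}(y)=\tau(x)$, which holds for all $x,y$ exactly when $\beta_{0}=\tau\sigma^{-1}$. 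So I would set $\beta=\beta_{\tau\sigma^{-1}}$.

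With this $\beta$ fixed, the remaining verifications are routine and I would carry them out on basis elements, extending by linearity. The left-module identity $\phi(a\cdot e)=\alpha(a)\cdot\phi(e)$ and the left inner product identity ${}_{A}\langle\phi(e),\phi(f)\rangle=\alpha({}_{A}\langle e,f\rangle)$ are immediate, since the left structures coincide and $\alpha=\mathrm{id}$. The right-module identity reduces to the computation just described, and the right inner product identity $\langle\phi(e),\phi(f)\rangle_{A}=\beta(\langle e,f\rangle_{A})$ reduces on $\delta_{x}$ to $\delta_{\tau(x)}=\beta_{\tau\sigma^{-1}}(\delta_{\sigma(x)})$, which holds because $\tau\sigma^{-1}(\sigma(x))=\tau(x)$. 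Since $\phi$ is the identity on the underlying vector space it is automatically bijective and $w^{*}$-continuous, so $(\mathrm{id}_{A},\phi,\beta_{\tau\sigma^{-1}})$ is a $W^{*}$-equivalence bimodule isomorphism and ${}_{A}A_{\sigma}{}_{A}\cong{}_{A}A_{\tau}{}_{A}$.

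I do not anticipate a serious obstacle: the whole content is bookkeeping of which side the permutation twist lives on, and the single genuine choice, the automorphism $\beta=\beta_{\tau\sigma^{-1}}$, is forced by matching the right actions. The only points needing minor care are confirming that $\beta_{\tau\sigma^{-1}}$ is normal (automatic for a coordinate permutation of $\ell^{\infty}$) and that conjugate-linearity of the inner products is respected (automatic, since $\beta$ is a linear $*$-automorphism applied to already-computed inner products). As an alternative streamlining, one could instead prove only the special case ${}_{A}A_{\sigma}{}_{A}\cong{}_{A}A{}_{A}$ by taking $\tau=\mathrm{id}$, and then invoke transitivity of the bimodule-isomorphism relation to obtain the general statement.
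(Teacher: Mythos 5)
Your proposal is correct and is essentially the paper's own proof: the paper likewise uses the triple $(\iota,\iota,\pi)$ with the identity on the underlying module and on the left copy of $A$, pushing the entire twist into a permutation automorphism of the right copy of $A$. Your $\beta_{\tau\sigma^{-1}}$, pinned down by the condition $\beta_{0}\circ\sigma=\tau$, is precisely the automorphism the paper writes as $\pi(a)=a_{\tau^{-1}\sigma}$ in coefficient-reindexing notation, and your point-mass verifications correspond to the paper's coefficientwise computations.
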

\begin{proof}
If $a=\sum \limits_{x\in X}a_{x}\delta _{x} \in A$, denote $\sum\limits_{x\in X} a_{\sigma (x)}\delta _{x}$ by $a_{\sigma}$. Consider the triple $(\iota, \iota, \pi): _{A}{A_{\sigma}} _{A} \to _{A}{A_{\tau}} _{A}$ where $\iota$ is the identity map on $A$ and $\pi :A\to A$ is given by $\pi(a)=a_{\tau ^{-1} \sigma}$. That is, $ \pi (\sum\limits_{x\in X} z_{x}\delta_{x})=\sum\limits_{x\in X} z_{\tau^{-1}(\sigma(x))}\delta_{x}$. Then
\begingroup
\allowdisplaybreaks
{%
\abovedisplayskip=0pt \belowdisplayskip=1pt%
\begin{align*}
\iota (a\cdot e \cdot b)&=\iota (\sum\limits_{x\in X} a_{x}\delta _{x}\cdot \sum\limits_{x\in X} z_{x}\delta _{x}\cdot \sum\limits_{x\in X} b_{x}\delta _{x})= \iota (\sum\limits_{x\in X} a_{x}z_{x}b_{\sigma (x)}\delta _{x})\\
&=\sum\limits_{x\in X} a_{x}z_{x}b_{\sigma (x)}\delta _{x}= \sum\limits_{x\in X} a_{x}\delta _{x}\cdot \sum\limits_{x\in X} z_{x}\delta _{x}\cdot \sum\limits_{x\in X} b_{\tau ^{-1}(\sigma (x))}\delta _{x}\\
&= \iota (\sum\limits_{x\in X} a_{x}\delta _{x})\cdot \iota(\sum \limits_{x\in X}z_{x}\delta _{x})\cdot \pi(\sum\limits_{x\in X} b_{x}\delta _{x})\\
&=\iota (a)\cdot \iota(e)\cdot \pi(b).
\end{align*}
}%
\endgroup
So $(\iota, \iota, \pi)$ is a bimodule homomorphism.
\begingroup
\allowdisplaybreaks
{%
\abovedisplayskip=0pt \belowdisplayskip=0pt%
\begin{align*}
\langle\iota(\sum\limits_{x\in X} z_{x}\delta _{x}), \iota(\sum\limits_{x\in X} w_{x}\delta _{x})\rangle _{A}&= \langle\sum\limits_{x\in X} z_{x}\delta _{x}, \sum\limits_{x\in X} w_{x}\delta _{x}\rangle _{A}\\
&=\sum\limits_{x\in X}\overline{z_{\tau ^{-1}(x)}} w_{\tau ^{-1}(x)}\delta _{x}\\
&=\pi(\sum\limits_{x\in X}\overline{z_{\sigma ^{-1}(x)}} w_{\sigma ^{-1}(x)}\delta _{x})\\
&= \pi(\langle\sum\limits_{x\in X} z_{x}\delta _{x}, \sum\limits_{x\in X} w_{x}\delta _{x}\rangle _{A})
\end{align*}
}%
\endgroup
and
\begin{align*}
_{A}\langle\iota(\sum\limits_{x\in X} z_{x}\delta _{x}), \iota(\sum\limits_{x\in X} w_{x}\delta _{x})\rangle &= _{A}\langle\sum\limits_{x\in X} z_{x}\delta _{x}, \sum\limits_{x\in X} w_{x}\delta _{x}\rangle=\iota( _{A}\langle\sum\limits_{x\in X} z_{x}\delta _{x}, \sum\limits_{x\in X} w_{x}\delta _{x}\rangle)
\end{align*}
So $(\iota, \iota, \pi)$ preserves inner products. Thus $(\iota, \iota, \pi)$ is a $W^{*}$-equivalence bimodule isomorphism.
\end{proof}
By \cref{Asigma} and \cref{justA}, 
$P=\{[\mf{X}] : A\stackrel{WME}{\sim}_{\mf{X}}A\}$ consists of only one element: 
\begin{theorem}
If $A=C(X)$ for some set $X$, then $P=\{[\mf{X}] : A\stackrel{WME}{\sim}_{\mf{X}}A\}=\{[A]\}$.
\end{theorem}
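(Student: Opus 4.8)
The plan is to combine the classification provided by \cref{Asigma} with the rigidity provided by \cref{justA}; once both are in hand, the theorem is almost immediate. First I would recall from \cref{Pgroup} that $P$ is a group under $\overbar{\otimes}_{A}$ whose identity element is $[A]$, so it suffices to prove that every class $[\mf{X}]\in P$ equals $[A]$.

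Given an arbitrary $A$-$A$ $W^{*}$-equivalence bimodule $_{A}\mf{X}_{A}$ with $A=C(X)$, \cref{Asigma} identifies it up to isomorphism with $_{A}{A_{\sigma}}_{A}$, where $\sigma=R\circ L$ is the permutation of $X$ extracted from the Rieffel correspondence of $_{A}\mf{X}_{A}$. Hence every element of $P$ is represented by $_{A}{A_{\sigma}}_{A}$ for some $\sigma\in S_{X}$, and the problem reduces to understanding these particular bimodules.

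Next I would apply \cref{justA}, which asserts that $_{A}{A_{\sigma}}_{A}\cong {_{A}{A_{\tau}}_{A}}$ for all $\sigma,\tau\in S_{X}$. Taking $\tau=\mathrm{id}_{X}$, every $_{A}{A_{\sigma}}_{A}$ is isomorphic to $_{A}{A_{\mathrm{id}_{X}}}_{A}$. Inspecting the defining actions and inner products of $A_{\sigma}$ at $\sigma=\mathrm{id}_{X}$ shows that they collapse to the ordinary left and right multiplications and the ordinary $A$-valued inner products, so $_{A}{A_{\mathrm{id}_{X}}}_{A}$ is nothing but $A$ regarded as a bimodule over itself. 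Thus its class is exactly the neutral element $[A]$ of $P$. Stringing the isomorphisms together yields $[\mf{X}]=[A]$ for every $[\mf{X}]\in P$, and therefore $P=\{[A]\}$.

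I expect no genuine obstacle at this stage: the entire weight of the argument has already been absorbed into \cref{Asigma} and \cref{justA} --- the first classifying equivalence bimodules over $C(X)$ by permutations of $X$, the second showing that the underlying permutation is invisible at the level of isomorphism classes. No tensor-product computations or operator-space estimates remain. The only point deserving an explicit line is the verification that $_{A}{A_{\mathrm{id}_{X}}}_{A}$ carries the standard structure, confirming that it represents the identity class identified in \cref{Pgroup}.
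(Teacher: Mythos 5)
Your proposal is correct and follows exactly the route the paper takes: the theorem is stated there as an immediate consequence of \cref{Asigma} and \cref{justA}, with the same reduction of an arbitrary class to $[{_{A}{A_{\sigma}}_{A}}]$ and then to $[A]$ via the case $\tau=\mathrm{id}_{X}$. Your extra remark verifying that $_{A}{A_{\mathrm{id}_{X}}}_{A}$ carries the standard bimodule structure is a harmless elaboration of what the paper leaves implicit.
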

Now consider the $W^{*}$-equivalence bimodule $_{A}{A_{\sigma}}\overbar{\otimes}_{A}{A_{\tau}}_{A}$. Since this bimodule is balanced over $A$, we have that $\delta_{x}\cdot \delta_{z}\otimes \delta_{y}=\delta_{x} \otimes\delta_{z}\cdot \delta_{y}$ if and only if $\sigma(x)=z=y$. Thus the non zero elements of $_{A}{A_{\sigma}}\otimes _{A}{A_{\tau}}_{A}$ are of the form $\sum (z_{x}\delta_{x} \otimes w_{\sigma (x)}\delta_{\sigma (x)})$, where $z_{x}$, $w_{\sigma (x)}\in \mathbb{C} $. Note that if $\sigma ,\tau \in S_{X}$, then \cref{Pgroup} and  \cref{justA} say that $_{A}{A_{\sigma}}\overbar{\otimes}_{A}{A_{\tau}}_{A}$ is isomorphic to $_{A}A _{A}$. Here we give an explicit $W^{*}$-isomorphism between these two $W^{*}$-equivalence bimodules. Consider the triple
\[(\omega, \psi, \pi): {_{A}{A_{\sigma}}\overbar{\otimes}_{A}{A_{\tau}}_{A}} \to {_{A}A _{A}}\]
where $\omega :A\to A$ is given by $\omega (a)=a_{\sigma^{-1}}$ (that is, $\pi (\sum\limits_{x\in X} a_{x}\delta_{x})=\sum\limits_{x\in X} a_{\sigma^{-1}(x)}\delta_{x}$), $\psi :A_{\sigma}\overbar{\otimes}_{A}A_{\tau} \to A$ is given by $\psi (\sum\limits_{x\in X} z_{x}\delta_{x} \otimes_{A} \sum\limits_{x\in X} w_{x}\delta_{x})=\psi (\sum\limits_{x\in X} z_{x}\delta_{x} \otimes w_{\sigma (x)}\delta_{\sigma (x)})=\sum\limits_{x\in X} z_{x}w_{\sigma (x)}\delta_{\sigma (x)}$ and $\pi :A\to A$ is given by $\pi (a)=a_{\tau}$. That is, $\pi (\sum\limits_{x\in X} a_{x}\delta_{x})=\sum\limits_{x\in X} a_{\tau (x)}\delta_{x}$. Then
\begin{align*}
 \psi &(\sum\limits_{x\in X} a_{x}\delta _{x} \cdot (\sum\limits_{x\in X} z_{x}\delta_{x} \otimes_{A} \sum\limits_{x\in X} w_{x}\delta_{x})\cdot \sum\limits_{x\in X} b_{x}\delta _{x} )\\
&=\psi (\sum \limits_{x\in X}a_{x}\delta _{x} \cdot (\sum\limits_{x\in X} z_{x}\delta_{x} \otimes_{A} \sum\limits_{x\in X} w_{\sigma (x)}\delta_{\sigma (x)})\cdot \sum\limits_{x\in X} b_{x}\delta _{x})\\
&=\psi (\sum\limits_{x\in X} a_{x}z_{x}\delta_{x} \otimes w_{\sigma (x)}b_{\tau (\sigma (x))} \delta_{\sigma (x)})\\
&=\sum\limits_{x\in X} a_{x}z_{x}w_{\sigma (x)}b_{\tau (\sigma (x))} \delta_{\sigma(x)}\\
&=\sum \limits_{x\in X}a_{x}\delta _{\sigma(x)} \cdot \sum\limits_{x\in X} z_{x}w_{\sigma (x)}\delta_{\sigma(x)}\cdot \sum\limits_{x\in X} b_{\tau(\sigma(x))}\delta _{\sigma(x)}\\
&=\sum\limits_{x\in X} a_{\sigma ^{-1}(x)}\delta _{x} \cdot \sum\limits_{x\in X} z_{x}w_{\sigma (x)}\delta_{\sigma(x)}\cdot \sum \limits_{x\in X}b_{\tau(x)}\delta _{x}\\
&=\omega (\sum\limits_{x\in X} a_{x}\delta _{x}) \cdot \psi ( \sum\limits_{x\in X} z_{x}\delta _{x} \otimes w_{\sigma (x)}\delta_{\sigma (x)})\cdot \pi (\sum \limits_{x\in X}b_{x}\delta _{x})
\end{align*}
So $(\omega, \psi, \pi)$ is a bimodule homomorphism.
\begingroup
\allowdisplaybreaks
{%
 \belowdisplayskip=1pt%
\begin{align*}
\langle \psi &(\sum\limits_{x\in X} z_{x}\delta_{x} \otimes_{A} \sum\limits_{x\in X} w_{x}\delta_{x}), \psi (\sum\limits_{x\in X} u_{x}\delta_{x} \otimes_{A} \sum\limits_{x\in X} v_{x}\delta_{x})\rangle_{A}\\
&= \langle\psi (\sum\limits_{x\in X} z_{x}\delta_{x} \otimes w_{\sigma(x)}\delta_{\sigma(x)}), \psi (\sum\limits_{x\in X} u_{x}\delta_{x} \otimes v_{\sigma(x)}\delta_{\sigma(x)})\rangle_{A}\\
&= \langle\sum\limits_{x\in X} z_{x}w_{\sigma(x)}\delta_{\sigma(x)}, \sum \limits_{x\in X}u_{x}v_{\sigma(x)}\delta_{\sigma(x)}\rangle_{A}\\
&= \sum\limits_{x\in X} \overline{z_{x}w_{\sigma(x)}}u_{x}v_{\sigma(x)}\delta_{\sigma(x)}\\
&=\pi(\sum \limits_{x\in X}\overline{z_{x}w_{\sigma(x)}}u_{x}v_{\sigma(x)}\delta_{\tau(\sigma(x))})\\
&=\pi(\sum\limits_{x\in X}\langle w_{\sigma(x)}\delta _{\sigma(x)}, \overline{z_{x}}u_{x}v_{\sigma(x)}\delta_{\sigma(x)}\rangle_{A}) \\
&=\pi(\sum\limits_{x\in X}\langle w_{\sigma(x)}\delta _{\sigma(x)}, \overline{z_{x}}u_{x}\delta _{\sigma(x)}\cdot v_{\sigma(x)}\delta_{\sigma(x)}\rangle_{A}) \\
&=\pi(\sum\limits_{x\in X}\langle w_{\sigma(x)}\delta _{\sigma(x)}, \langle z_{x}\delta _{x}, u_{x}\delta _{x}\rangle _{A} \cdot v_{\sigma(x)}\delta_{\sigma(x)}\rangle_{A}) \\
&=\pi(\langle\sum\limits_{x\in X} z_{x}\delta _{x}\otimes w_{\sigma(x)}\delta _{\sigma(x)}, \sum\limits_{x\in X} u_{x}\delta _{x}\otimes v_{\sigma(x)}\delta_{\sigma(x)}\rangle_{A}) \\
&=\pi(\langle\sum\limits_{x\in X} z_{x}\delta _{x}\otimes \sum\limits_{x\in X} w_{x}\delta _{x},\sum\limits_{x\in X} u_{x}\delta _{x}\otimes \sum\limits_{x\in X} v_{x}\delta_{x}\rangle_{A}) \\
\end{align*}
}%
\endgroup
and
\begingroup
\allowdisplaybreaks
{%
\abovedisplayskip=0pt \belowdisplayskip=0pt%
\begin{align*}
_{A}\langle \psi &(\sum\limits_{x\in X} z_{x}\delta_{x} \otimes_{A} \sum\limits_{x\in X} w_{x}\delta_{x}), \psi (\sum\limits_{x\in X} u_{x}\delta_{x} \otimes_{A} \sum\limits_{x\in X} v_{x}\delta_{x})\rangle\\
&=_{A} \langle\psi (\sum\limits_{x\in X} z_{x}\delta_{x} \otimes w_{\sigma(x)}\delta_{\sigma(x)}), \psi (\sum\limits_{x\in X} u_{x}\delta_{x} \otimes v_{\sigma(x)}\delta_{\sigma(x)})\rangle\\
&= _{A}\langle\sum\limits_{x\in X} z_{x}w_{\sigma(x)}\delta_{\sigma(x)}, \sum\limits_{x\in X} u_{x}v_{\sigma(x)}\delta_{\sigma(x)}\rangle\\
&= \sum \limits_{x\in X}z_{x}w_{\sigma(x)}\overline{ u_{x}v_{\sigma(x)}}\delta_{\sigma(x)}\\
&= \omega(\sum\limits_{x\in X} z_{x}w_{\sigma(x)}\overline{ u_{x}v_{\sigma(x)}}\delta_{x})\\
&=\omega(\sum\limits_{x\in X} {_{A}\langle z_{x}\delta _{x}, u_{x}v_{\sigma(x)}\overline{w_{\sigma(x)}}\delta_{x}\rangle}) \\
&=\omega(\sum\limits_{x\in X} {_{A}\langle z_{x}\delta _{x}, u_{x}\delta_{x} \cdot v_{\sigma(x)}\overline{w_{\sigma(x)}}\delta_{\sigma(x)}\rangle}) \\
&=\omega(\sum\limits_{x\in X} {_{A}\langle z_{x}\delta _{x}, u_{x}\delta_{x} \cdot_{A}\langle v_{\sigma(x)}\delta _{\sigma(x)}, w_{\sigma(x)}\delta_{\sigma(x)}\rangle}) \\
&=\omega(\sum\limits_{x\in X} {_{A}\langle z_{x}\delta _{x}\otimes  w_{\sigma(x)}\delta_{\sigma(x)}, u_{x}\delta_{x} \otimes v_{\sigma(x)}\delta _{\sigma(x)}\rangle}) \\
&=\omega( {_{A}\langle\sum\limits_{x\in X} z_{x}\delta _{x}\otimes  w_{\sigma(x)}\delta_{\sigma(x)}, \sum\limits_{x\in X} u_{x}\delta_{x} \otimes v_{\sigma(x)}\delta _{\sigma(x)}\rangle}) \\
&=\omega( {_{A}\langle\sum\limits_{x\in X} z_{x}\delta _{x}\otimes \sum \limits_{x\in X}w_{x}\delta_{x}, \sum\limits_{x\in X} u_{x}\delta_{x} \otimes\sum\limits_{x\in X} v_{x}\delta _{x}\rangle}) \\
\end{align*}
}%
\endgroup
So $(\omega, \psi, \pi): _{A}{A_{\sigma}}\overbar{\otimes} _{A}{A_{\tau}}_{A} \to _{A}A _{A}$ preserves inner products. Thus, it is injective. For each $\sum\limits_{x\in X} a_{x}\delta_{x} \in A$, $\psi(\sum\limits_{x\in X} a_{x}\delta_{\sigma ^{-1}(x)} \otimes \sum\limits_{x\in X} \delta_{x})=\psi(\sum\limits_{x\in X} a_{x}\delta_{\sigma ^{-1}(x)} \otimes \delta_{x})=\sum\limits_{x\in X} a_{x}\delta_{x}$. So $(\omega, \psi, \pi)$ is surjective. Thus a $W^{*}$-equivalence bimodule isomorphism.
\begin{lemma}\label{Moritacoarser}
Let $_{A}E_{A}$ and $_{B}F_{B}$ be two $W^{*}$-correspondences. If $_{A}E_{A}\cong {_{B}F_{B}}$ then  $_{A}E_{A}\stackrel{WME}{\sim} {_{B}F_{B}}$.
\end{lemma}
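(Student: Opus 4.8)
The plan is to exhibit the given $W^{*}$-correspondence isomorphism $_{A}E_{A}\cong {_{B}F_{B}}$ as a \emph{special case} of weak Morita equivalence, taking for the equivalence bimodule the ``graph'' of the underlying algebra isomorphism. Write the isomorphism as a pair $(\sigma,\psi)$, where $\sigma\colon A\to B$ is a $W^{*}$-algebra isomorphism and $\psi\colon E\to F$ is the accompanying module isomorphism, so that $\psi(a\cdot e\cdot a')=\sigma(a)\cdot\psi(e)\cdot\sigma(a')$ and $\langle\psi(e),\psi(f)\rangle=\sigma(\langle e,f\rangle)$. Let $X$ denote $A$ regarded as an $A$-$B$ bimodule with left action by multiplication, right action $a\cdot b=a\,\sigma^{-1}(b)$, right inner product $\langle a,a'\rangle_{B}=\sigma(a^{*}a')$ and left inner product $_{A}\langle a,a'\rangle=a\,a'^{*}$. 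First I would check, exactly as in the verification of $_{A}A_{\sigma}$ in \cref{Asigma}, that $X$ is an $A$-$B$ $W^{*}$-equivalence bimodule: fullness on both sides is immediate since $\langle X,X\rangle_{B}=\sigma(A)=B$ and $_{A}\langle X,X\rangle=A$; selfduality over $B$ is transported from selfduality of $A$ over itself through $\sigma$; and the compatibility $_{A}\langle a,a'\rangle\cdot z = a\cdot\langle a',z\rangle_{B}$ is the identity $a\,a'^{*}z = a\,\sigma^{-1}(\sigma(a'^{*}z))$. In particular $A$ and $B$ are weakly Morita equivalent via $X$.

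It then remains to produce an $A$-$B$ $W^{*}$-correspondence isomorphism $W\colon X\overbar{\otimes}_{B} F \to E\overbar{\otimes}_{A}X$. I would define it on elementary tensors by
\[
W(a\otimes f)=\big(\varphi_{E}(a)\psi^{-1}(f)\big)\otimes 1_{A},
\]
equivalently by $W(1\otimes f)=\psi^{-1}(f)\otimes 1_{A}$ extended by left $A$-linearity. The crucial verification is that $W$ respects the balancing over $B$: on the domain one has $a\cdot\sigma^{-1}(b)\otimes f = a\otimes \varphi_{F}(b)f$ for $b\in B$, and applying $W$ to both sides reduces to the intertwining relation $\varphi_{E}(\sigma^{-1}(b))=\psi^{-1}\varphi_{F}(b)\psi$, which is just a restatement of $\psi(a\cdot e)=\sigma(a)\cdot\psi(e)$. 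Granting this, $W$ is manifestly a left $A$-module map, and it is a right $B$-module map because $\psi^{-1}(f\cdot b)=\psi^{-1}(f)\cdot\sigma^{-1}(b)$ lets the scalar pass across the balanced tensor. To see that $W$ preserves the $B$-valued inner product I would compute $\langle e\otimes 1,e'\otimes 1\rangle_{B}=\sigma(\langle e,e'\rangle_{A})$ in $E\overbar{\otimes}_{A}X$, combine it with $\langle\varphi_{E}(a)\psi^{-1}(f),\varphi_{E}(a')\psi^{-1}(f')\rangle_{A} = \langle\psi^{-1}(f),\varphi_{E}(a^{*}a')\psi^{-1}(f')\rangle_{A}$, and then push everything through $\psi$ using $\sigma(\langle e,e'\rangle_{A})=\langle\psi(e),\psi(e')\rangle_{B}$; the outcome is $\langle f,\varphi_{F}(\sigma(a^{*}a'))f'\rangle_{B}$, which is precisely $\langle a\otimes f,a'\otimes f'\rangle_{B}$ on the domain.

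Being inner-product preserving, $W$ is isometric and extends from elementary tensors to the selfdual completions with $w^{*}$-closed range; surjectivity follows because every $e\otimes a\in E\overbar{\otimes}_{A}X$ equals $(e\cdot a)\otimes 1_{A}=W(1\otimes\psi(e\cdot a))$, so the range already contains all elementary tensors. Hence $W$ is a $W^{*}$-correspondence isomorphism and $_{A}E_{A}\stackrel{WME}{\sim}_{X}{_{B}F_{B}}$. I expect the only genuine friction to be the balancing check together with the care needed in passing to the selfdual ($w^{*}$) completions; once the formula for $W$ is pinned down, every remaining identity is a direct consequence of the two defining equations of the pair $(\sigma,\psi)$, so the argument is essentially bookkeeping organized around the graph bimodule $X$.
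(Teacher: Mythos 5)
Your proposal is correct and follows essentially the same route as the paper: both take the ``graph'' of the algebra isomorphism as the $A$-$B$ $W^{*}$-equivalence bimodule (you realize it as $A$ with a $\sigma$-twisted right action, the paper as $B$ with a $\pi$-twisted left action, which are isomorphic) and transport everything through the two defining identities of $(\sigma,\psi)$. The only difference is organizational: you build $W\colon X\overbar{\otimes}_{B}F\to E\overbar{\otimes}_{A}X$ directly, whereas the paper first establishes $\widetilde{X}\overbar{\otimes}_{A}E\overbar{\otimes}_{A}X\cong F$ and then obtains the required isomorphism by tensoring with $X$ and invoking Lemma~\ref{mA}; your balancing, inner-product, and surjectivity checks all go through as claimed.
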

\begin{proof}
If $_{A}E_{A}\cong _{B}F_{B}$ then there is a $W^{*}$-correspondence isomorphism $(\pi, \phi):_{A}E_{A}\to _{B}F_{B}$, where $\phi$ is a vector space isomorphism and $\pi :A\to B$ is a $W^{*}$-algebra isomorphism. Then $B$ is an $A$-$B$ $W^{*}$-equivalence bimodule with the left action given by $a\cdot b=\pi(a)b$, right action given by multiplication in $B$ and inner products given by $\langle b_{1}, b_{2}\rangle_{B}=b_{1}^{*}b_{2}$ and $_{A}\langle b_{1}, b_{2}\rangle=\pi ^{-1}(b_{1}b_{2}^{*})$.

We show that $(\iota, \varphi):_{B}\widetilde{B}\overbar{\otimes}_{A}E\overbar{\otimes}_{A}B_{B}\to _{B}F_{B}$ is a $W^{*}$-correspondence isomorphism, where $\iota$ is the identity map and $\varphi :\widetilde{B}\overbar{\otimes}_{A}E\overbar{\otimes}_{A}B\to F$ is defined by $\varphi(\widetilde{b}\otimes e \otimes c)=b^{*}\cdot\phi(e)\cdot c$. Let $e,g\in E$ and $ a,b,c,d, \alpha, \beta \in B$. Then
\begin{align*}
\varphi(\alpha\cdot(\widetilde{b}\otimes e \otimes c)\cdot\beta)&=\varphi(\widetilde{b\cdot\alpha^{*}}\otimes e\otimes c\cdot \beta)=\varphi(\widetilde{b\alpha^{*}}\otimes e\otimes c\beta)\\
&=\alpha b^{*}\cdot\phi(e)\cdot c\beta=\alpha \cdot ( b^{*}\cdot\phi(e)\cdot c)\cdot \beta\\
&=\alpha \cdot \varphi( \widetilde{b}\otimes e\otimes  c)\cdot \beta
\end{align*}
So $(\iota,\varphi,\iota)$ is a correspondence homomorphism.
\begin{equation*}
\begin{split}
\langle\varphi &(\widetilde{b}\otimes e\otimes c), \varphi(\widetilde{a}\otimes g\otimes d)\rangle_{B}\\
&=\langle b^{*}\cdot \phi(e)\cdot c, a^{*}\cdot \phi(g)\cdot d\rangle_{B}
=c^{*}\langle b^{*}\cdot \phi(e), a^{*}\cdot \phi(g)\rangle_{B}d\\
&=c^{*}\langle  \phi(e),b\cdot a^{*}\cdot \phi(g)\rangle_{B}d
=c^{*}\langle  \phi(e),ba^{*}\cdot \phi(g)\rangle_{B}d\\
&=c^{*}\langle  \phi(e), \pi(\pi^{-1}(ba^{*}))\cdot \phi(g)\rangle_{B}d=c^{*}\langle  \phi(e), \phi(\pi^{-1}(ba^{*})\cdot g)\rangle_{B}d\\
&=c^{*}\pi(\langle e, \pi^{-1}(ba^{*})\cdot g\rangle_{A})d
=\langle c, \pi(\langle e, \pi^{-1}(ba^{*})\cdot g\rangle_{A})d\rangle_{B}\\
&=\langle c, \langle e, \pi^{-1}(ba^{*})\cdot g\rangle_{A}\cdot d\rangle_{B}
=\langle c, \langle e, _{A}\langle b,a\rangle\cdot g\rangle_{A}\cdot d\rangle_{B}\\
&=\langle c, \langle e, \langle\widetilde{b},\widetilde{a}\rangle_{A}\cdot g\rangle_{A}\cdot d\rangle_{B}=\langle c, \langle \widetilde{b}\otimes e, \widetilde{a}\otimes g\rangle_{A}\cdot d\rangle_{B}\\
&=\langle \widetilde{b}\otimes e\otimes c, \widetilde{a}\otimes g\otimes d\rangle_{B}=\iota(\langle \widetilde{b}\otimes e\otimes c, \widetilde{a}\otimes g\otimes d\rangle_{B})
\end{split}
\end{equation*}
So $(\iota,\varphi,\iota)$ preserves the inner product. Thus it is injective. Since for each $f\in F$, there is $e\in E$ such that $\phi(e)=f$, we have $\varphi(1\otimes e\otimes 1)=f$. So $\varphi$ is surjectve, thus a $W^{*}$-correspondence isomorphism.

Since $_{B}\widetilde{B}\overbar{\otimes}_{A}E\overbar{\otimes}_{A}B_{B}\cong {_{B}F_{B}}$, we have $_{A}B\overbar{\otimes}_{B}F \cong _{A}B\overbar{\otimes}_{B}\widetilde{B}\overbar{\otimes}_{A}E\overbar{\otimes}_{A}B_{B}\cong {_{A}A}\overbar{\otimes}_{A}E\overbar{\otimes}_{A}B_{B}\cong {_{A}E\overbar{\otimes}_{A}B_{B}}$. Thus  $_{A}E_{A}\stackrel{WME}{\sim}{_{B}F_{B}}$.
\end{proof}
\begin{theorem} \label{moritaiso}
If $_{A}E_{A}$ and $_{B}D_{B}$ are $W^{*}$-graph correspondences then $_{A}E_{A}\stackrel{WME}{\sim}{_{B}D_{B}}$ if and only if $_{A}E_{A}\cong {_{B}D_{B}}$.
\end{theorem}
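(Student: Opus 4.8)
The plan is to treat the two implications separately; the reverse implication is immediate and the forward one carries all the content. For the reverse direction, if $_{A}E_{A}\cong {_{B}D_{B}}$ then \cref{Moritacoarser} already gives $_{A}E_{A}\stackrel{WME}{\sim}{_{B}D_{B}}$, so there is nothing further to do.

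For the forward direction, assume $_{A}E_{A}\stackrel{WME}{\sim}{_{B}D_{B}}$. Unwinding the definition, this provides an $A$-$B$ $W^{*}$-equivalence bimodule $_{A}\mathfrak{X}_{B}$ and a $W^{*}$-correspondence isomorphism $W\colon \mathfrak{X}\overbar{\otimes}_{B}D\to E\overbar{\otimes}_{A}\mathfrak{X}$. Since $_{A}E_{A}$ and $_{B}D_{B}$ are graph correspondences, $A=C(G^{0}_{E})$ and $B=C(G^{0}_{D})$ are commutative. The first step is to reduce to the case $A=B$: because $A$ and $B$ are commutative and Morita equivalent, the partial-isometry basis and Rieffel-correspondence analysis of the discussion preceding \cref{Asigma} applies to $\mathfrak{X}$ and pairs the vertex data of the two graphs bijectively, yielding $|G^{0}_{E}|=|G^{0}_{D}|$ and a $W^{*}$-algebra isomorphism $\pi\colon B\to A$. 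Transporting the right $B$-action and right $B$-valued inner product of $\mathfrak{X}$ through $\pi$ turns $\mathfrak{X}$ into an $A$-$A$ equivalence bimodule and $_{B}D_{B}$ into a correspondence over $A$.

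The crux is to trivialize the bimodule $\mathfrak{X}$. By \cref{Asigma}, the $A$-$A$ equivalence bimodule $\mathfrak{X}$ is isomorphic to $_{A}{A_{\sigma}}_{A}$ for the permutation $\sigma$ that it induces, and by \cref{justA}, applied with $\tau$ the identity permutation, $_{A}{A_{\sigma}}_{A}\cong {_{A}A_{A}}$. Hence $\mathfrak{X}\cong {_{A}A_{A}}$ as equivalence bimodules; this is the concrete manifestation of the triviality of the group $P$ of \cref{Pgroup}.

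Finally, I feed this into $W$. An equivalence-bimodule isomorphism $\mathfrak{X}\cong A$ induces $W^{*}$-correspondence isomorphisms after balanced tensoring, so the composite
\[
D\cong A\overbar{\otimes}_{A}D\cong \mathfrak{X}\overbar{\otimes}_{A}D\xrightarrow{\ W\ }E\overbar{\otimes}_{A}\mathfrak{X}\cong E\overbar{\otimes}_{A}A\cong E
\]
is a $W^{*}$-correspondence isomorphism between $D$ and $E$ over $A$. Reversing the transport through $\pi$ converts this into a $W^{*}$-correspondence isomorphism $_{A}E_{A}\cong {_{B}D_{B}}$. I expect the genuine obstacle to lie in the reduction $A\cong B$ and, more subtly, in checking that the identification $\mathfrak{X}\cong A$ is an isomorphism of equivalence bimodules rather than merely of bimodules, since only then does it survive the tensoring step above; the structure theory recorded in \cref{Asigma} and \cref{justA} is exactly what secures this.
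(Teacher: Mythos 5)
Your overall strategy is the same as the paper's: handle the reverse implication by \cref{Moritacoarser}, reduce to the case $A=B$ using commutativity (the paper gets $A\cong B$ more directly from the fact that Morita equivalent $W^{*}$-algebras have isomorphic centers), identify the equivalence bimodule $\mathfrak{X}$ with $_{A}{A_{\sigma}}_{A}$ via \cref{Asigma}, and cancel it out of the defining isomorphism $W$. The problem is in the final cancellation, and it is exactly the point you flagged and then dismissed.

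The isomorphism $_{A}{A_{\sigma}}_{A}\cong{_{A}A_{A}}$ supplied by \cref{justA} is the triple $(\iota,\iota,\pi)$ with $\pi(a)=a_{\sigma}$: it relabels the \emph{right-hand} copy of $A$ by a nontrivial automorphism. Because of this, it does not induce a map $\mathfrak{X}\overbar{\otimes}_{A}D\to A\overbar{\otimes}_{A}D$ by tensoring with $\mathrm{id}_{D}$ (the balancing relation forces a simultaneous twist of the left action of $D$ by $\pi$), and the step $\mathfrak{X}\overbar{\otimes}_{A}D\cong D$ is false in general as a statement about correspondences: $A_{\sigma}\overbar{\otimes}_{A}D$ is the graph correspondence of the graph obtained from that of $D$ by replacing $r$ with $\sigma^{-1}\circ r$, which by \cref{moritagraph} is isomorphic to $D$ only when those two graphs are isomorphic (a single edge from $v_{1}$ to $v_{2}$ with $\sigma$ the transposition turns a loop-free graph into one with a loop). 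Symmetrically, $E\overbar{\otimes}_{A}\mathfrak{X}\cong E$ twists only the right action. The two one-sided twists happen to cancel in the composite, so your map $D\to E$ is indeed a correspondence isomorphism over the automorphism $\pi$ --- but establishing this is precisely the content of the paper's proof, which conjugates on \emph{both} sides at once: it uses \cref{mA} (which cancels $\widetilde{A_{\sigma}}\overbar{\otimes}_{A}A_{\sigma}\cong A$ over the identity, with no twist) to obtain $D\cong\widetilde{A_{\sigma}}\overbar{\otimes}_{A}E\overbar{\otimes}_{A}A_{\sigma}$, and then verifies by direct computation that $\widetilde{A_{\sigma}}\overbar{\otimes}_{A}E\overbar{\otimes}_{A}A_{\sigma}\cong E$ via the pair $(\pi,\phi)$, $\phi(\widetilde{a}\otimes x\otimes b)=a^{*}\cdot x\cdot b$. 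That two-sided verification is the computational heart of the argument and is what your proposal omits.
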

\begin{proof}
One direction was already shown in \cref{Moritacoarser}. Now we show the converse. If $_{A}E_{A}\stackrel{WME}{\sim}{_{B}D_{B}}$ then we have $A\stackrel{WME}{\sim}B$, and since $A$ and $B$ are commutative, we have $A\cong B$ (recall that if two $W^{*}$-algebras are Morita equivalent then their centers are isomorphic). 
So there is a $W^{*}$-algebra isomorphism $\alpha: B\to A$, such that $(\alpha, \iota):{_{B}D_{B}} \to {_{A}D_{A}}$ is a $W^{*}$-correspondence isomorphism. Then by \cref{Moritacoarser}, we have $_{A}E_{A}\stackrel{WME}{\sim}{_{A}D_{A}}$. By \cref{Asigma}, a $W^{*}$-equivalence bimodule $_{A}\mathfrak{X}_{A}$ is isomorphic to ${_{A}{A_{\sigma}} _{A}}$, where $ _{A}{A_{\sigma}} _{A}$ is the same as $ _{A}A _{A}$ but with a modified right action and right inner product determined by some permutation $\sigma$ of $S$. Then $_{A}E_{A}\stackrel{WME}{\sim}_{A}D_{A}$ implies $A_{\sigma}\overbar{\otimes}_{A}D \cong E\overbar{\otimes}_{A}A_{\sigma}$. So using \cref{mA}, we have
\begin{equation*}
\begin{split}
_{B}D_{B}\cong  _{A}D_{A}\cong A\overbar{\otimes}_{A}D\cong  \widetilde{A_{\sigma}}\overbar{\otimes}_{A} A_{\sigma}\overbar{\otimes}_{A}D \cong \widetilde{A_{\sigma}}\overbar{\otimes}_{A}E\overbar{\otimes}_{A}A_{\sigma}
\end{split}
\end{equation*}
Thus, to show that $_{A}E_{A}\cong {_{A}D_{A}}$, all we need to show is that $\widetilde{A_{\sigma}}\overbar{\otimes}_{A}E\overbar{\otimes}_{A}A_{\sigma}\cong {_{A}E_{A}}$.

Consider the pair $(\pi, \phi)$ where $\phi :\widetilde{A_{\sigma}}\overbar{\otimes}_{A}E\overbar{\otimes}_{A}A_{\sigma}\to _{A}E_{A}$ is given by $\phi (\widetilde{a} \otimes x \otimes b)=a^{*} \cdot x \cdot b$ and $\pi :A\to A$ is given by $\pi (c)=c_{\sigma}$. That is, $\pi (\sum \limits_{x\in X} c_{i}\delta_{v_{i}})=\sum \limits_{x\in X} c_{\sigma(i)}\delta_{v_{i}}$. Clearly, $\pi$ is a $W^{*}$-isomorphism. Now we show that $(\pi, \phi):_{A}\widetilde{A_{\sigma}}\overbar{\otimes}_{A}E\overbar{\otimes}_{A}{A_{\sigma}}_{A}\to _{A}E_{A}$ is a $W^{*}$-correspondence isomorphism. Let $a,b,c,d \in A_{\sigma}$, $\alpha, \beta \in A$ and $x,y \in E$.
\begin{align*}
\phi (\alpha\cdot(\widetilde{a} \otimes x \otimes b)\cdot \beta)&=\phi ((\alpha\cdot\widetilde{a}) \otimes x \otimes (b\cdot \beta))
=\phi ((\widetilde{a\cdot \alpha^{*}}) \otimes x \otimes (b\cdot \beta))\\
&=\phi ((\widetilde{a \alpha^{*}_{\sigma}}) \otimes x \otimes (b\beta_{\sigma}))
=\alpha_{\sigma}a^{*}\cdot x\cdot b\beta_{\sigma}\\
&=\alpha_{\sigma}\cdot (a^{*}\cdot x\cdot b)\cdot \beta_{\sigma}=\pi (\alpha)\cdot \phi (\widetilde{a} \otimes x \otimes b)\cdot \pi(\beta)
\end{align*}
and
\begin{align*}
\langle\phi (\widetilde{a} \otimes x \otimes b), \phi (\widetilde{c} \otimes y \otimes d)\rangle _{A}&=
\langle a^{*} \cdot x \cdot b, c^{*} \cdot y \cdot d\rangle _{A}= b^{*}\langle a^{*} \cdot x, c^{*} \cdot y\rangle  _{A}d\\
&= \pi((b^{*}\langle a^{*}\cdot x, c^{*} \cdot y \rangle _{A}d)_{\sigma ^{-1}})= \pi((b^{*}\langle x, ac^{*} \cdot y \rangle _{A}d)_{\sigma ^{-1}})\\
&= \pi((b^{*}\langle ac^{*} \cdot y, x \rangle^{*} _{A}d)_{\sigma ^{-1}})= \pi(((\langle ac^{*} \cdot y, x \rangle _{A}b)^{*}d)_{\sigma ^{-1}})\\
&= \pi(\langle\langle ac^{*} \cdot y, x \rangle _{A}b, d \rangle_{A}) 
=\pi(\langle\langle ac^{*} \cdot y, x \rangle _{A}\cdot b, d \rangle_{A})\\
&=\pi(\langle\langle x, ac^{*} \cdot y \rangle^{*} _{A}\cdot b, d \rangle_{A})=\pi(\langle b, \langle x, ac^{*} \cdot y\rangle_{A}\cdot d \rangle_{A})\\
&=\pi(\langle b, \langle x, _{A}\langle a, c \rangle \cdot y\rangle_{A}\cdot d \rangle_{A})=\pi(\langle b, \langle x, \langle \widetilde{a}, \widetilde{c} \rangle _{A} \cdot y\rangle_{A}\cdot d \rangle_{A})\\
&=\pi(\langle b, \langle \widetilde{a} \otimes x, \widetilde{c} \otimes y\rangle _{A} \cdot d\rangle  _{A})=
\pi(\langle \widetilde{a} \otimes x \otimes b, \widetilde{c} \otimes y \otimes d\rangle _{A})
\end{align*}
So $\phi$ is isometric, thus injective. Since for each $e\in E$, $\phi(1_{A}\otimes e\otimes 1_{A})=1\cdot e\cdot 1=e$, $\phi$ is surjective. Thus $(\pi, \phi)$ is a $W^{*}$-correspondence isomorphism.
\end{proof}
Two directed graphs $G=(G^{0}, G^{1}, s_{1}, r_{1})$ and $F=(F^{0}, F^{1}, s_{2}, r_{2})$ are \emph{isomorphic} if there are two bijections $\alpha :G^{1} \to F^{1}$ and $\beta :G^{0}\to F^{0}$ such that for each edge $e\in G^{1}$, $s_{2}(\alpha(e))=\beta(s_{1}(e))$ and $r_{2}(\alpha(e))=\beta(r_{1}(e))$. 

Clearly, if we draw a directed graph $G=(G^{0}, G^{1}, s_{1}, r_{1})$  and relabel its edges and its vertices then we produce a new graph $F=(F^{0}, F^{1}, s_{2}, r_{2})$ whose identical drawing implies that the two  relabeling bijections $\gamma :G^{1} \to F^{1}$ and $\lambda: G^{0}\to F^{0}$ satisfy $s_{2}(\gamma(e))=\lambda(s_{1}(e))$ and $r_{2}(\gamma(e))=\lambda(r_{1}(e))$. So we obtain an isomorphic graph. In particular, if $G^{1}=F^{1}$ and $G^{0}=F^{0}$ then $\gamma$ and $\lambda$ are permutations.
\begin{theorem} \label{moritagraph}
Let $_{A}E_{A}$ and $_{B}D_{B}$ be $W^{*}$-graph correspondences associated to the directed graphs $G=(G^{0}, G^{1}, s_{1}, r_{1})$ and $F=(F^{0}, F^{1}, s_{2}, r_{2})$ respectively. $_{A}E_{A} \cong {_{B}D_{B}}$ if and only if $G\cong F$.
\end{theorem}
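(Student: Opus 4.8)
The plan is to prove each implication by producing the relevant isomorphism explicitly. For the forward direction, suppose $G \cong F$ via bijections $\alpha\colon G^1 \to F^1$ and $\beta\colon G^0 \to F^0$ with $s_2\circ\alpha = \beta\circ s_1$ and $r_2\circ\alpha = \beta\circ r_1$. I would take $\sigma\colon A \to B$ to be the normal $*$-isomorphism determined by $\sigma(\delta_{v})=\delta_{\beta(v)}$ (i.e. $a\mapsto a\circ\beta^{-1}$), and $\psi\colon E\to D$ the edge relabeling $\psi(x)=x\circ\alpha^{-1}$. The only substantive point is that $\psi$ is a well-defined surjective isometry: using $s_2\circ\alpha=\beta\circ s_1$ one checks $\sup_{u\in F^0}\sum_{s_2(f)=u}|\psi(x)(f)|^2=\sup_{v\in G^0}\sum_{s_1(e)=v}|x(e)|^2$, so the defining finiteness condition and the norm are preserved. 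The bimodule identities $\psi(a\cdot x\cdot b)=\sigma(a)\cdot\psi(x)\cdot\sigma(b)$ and $\langle\psi(x),\psi(y)\rangle=\sigma(\langle x,y\rangle)$ then follow directly from the compatibility of $\alpha,\beta$ with $s$ and $r$, since the formulas $(a\cdot x\cdot b)(e)=a(r(e))x(e)b(s(e))$ and $\langle x,y\rangle_A(v)=\sum_{s(e)=v}\overline{x(e)}y(e)$ only reference the source and range maps. This gives the $W^*$-correspondence isomorphism $(\sigma,\psi)$.

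For the converse, suppose $(\sigma,\psi)\colon {_AE_A}\to{_BD_B}$ is a $W^*$-correspondence isomorphism. The first step is to recover the vertex bijection from $\sigma$. Since $A=\ell^\infty_{|G^0|}$ and $B=\ell^\infty_{|F^0|}$ are commutative with minimal projections exactly the vertex point masses, the $W^*$-algebra isomorphism $\sigma$ carries atoms to atoms bijectively, so there is a bijection $\beta\colon G^0\to F^0$ with $\sigma(\delta_v)=\delta_{\beta(v)}$; in particular $|G^0|=|F^0|$.

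The second, and main, step is to recover the edges through the \emph{corners} of the correspondence. For vertices $v,w$, a direct computation from $(a\cdot x\cdot b)(e)=a(r(e))x(e)b(s(e))$ shows that the corner $\delta_w\cdot E\cdot\delta_v=\{x\in E:\delta_w\cdot x\cdot\delta_v=x\}$ consists exactly of the functions supported on edges $e$ with $s(e)=v$ and $r(e)=w$, and hence, under $\langle\cdot,\cdot\rangle_A(v)$, is a Hilbert space of dimension equal to the number of edges from $v$ to $w$. Using $\psi(\delta_w\cdot x\cdot\delta_v)=\delta_{\beta(w)}\cdot\psi(x)\cdot\delta_{\beta(v)}$ together with surjectivity of $\psi$, the map $\psi$ restricts to a linear bijection of $\delta_w\cdot E\cdot\delta_v$ onto $\delta_{\beta(w)}\cdot D\cdot\delta_{\beta(v)}$; and since $\langle\psi(x),\psi(y)\rangle_B=\sigma(\langle x,y\rangle_A)$ reduces, on this corner, to a scalar identity of the coefficients at $v$ and $\beta(v)$, this restriction is a Hilbert-space isometry. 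Therefore the number of edges from $v$ to $w$ in $G$ equals the number of edges from $\beta(v)$ to $\beta(w)$ in $F$. Choosing, for each pair $(v,w)$, any bijection between these two equinumerous edge sets and assembling them (the edge sets partition $G^1$ and $F^1$ by their source–range pairs) yields a bijection $\alpha\colon G^1\to F^1$ satisfying $s_2\circ\alpha=\beta\circ s_1$ and $r_2\circ\alpha=\beta\circ r_1$ by construction, so $(\alpha,\beta)$ is a graph isomorphism.

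The main obstacle is precisely this last step: $\psi$ need not send a point mass $\delta_e$ to a point mass, so one cannot read off an edge bijection from $\psi$ directly. The resolution is that a directed graph is determined up to isomorphism by its vertex set together with the edge-multiplicity function $(v,w)\mapsto\#\{e:s(e)=v,\ r(e)=w\}$, and this function is exactly what the corner dimensions compute; the correspondence isomorphism forces these dimensions to agree under $\beta$, which is all that is needed. Some care is required when an edge set is infinite, but there the relevant corner is an infinite-dimensional $\ell^2$-space and $\psi$ still preserves its Hilbert-space dimension, so the cardinalities of the edge sets again agree.
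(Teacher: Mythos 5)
Your forward direction is essentially the paper's: relabel point masses via $\alpha$ and $\beta$ and verify the bimodule and inner-product identities, which reduce to the compatibility of $\alpha,\beta$ with $s$ and $r$. Your converse, however, takes a genuinely different route, and a more robust one. The paper argues that the isomorphism $(\omega,\varphi)$ is itself ``given by permutations,'' i.e.\ that $\varphi$ carries each edge point mass $\delta_{e_i}$ to a single point mass: it computes $\varphi(\delta_{e_i})=\varphi(\delta_{e_i})\cdot\omega(\delta_{s_1(e_i)})$ and writes the result as $z\,\delta_{s_2^{-1}(v_k)}$, implicitly treating $s_2^{-1}(v_k)$ as a single edge. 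That step only pins $\varphi(\delta_{e_i})$ down to the span of the edges with source $v_k$, and the stronger claim is false in general: for a graph with two parallel edges from $v$ to $w$, the corner $\delta_w\cdot E\cdot\delta_v$ is a two-dimensional Hilbert space and any unitary of it extends to a correspondence automorphism that does not preserve point masses. You avoid this entirely by extracting only the invariant that is actually preserved --- the edge-multiplicity function $(v,w)\mapsto \#\{e: s(e)=v,\ r(e)=w\}$, realized as the Hilbert-space dimension of the corner $\delta_w\cdot E\cdot\delta_v$ --- together with the observation that a correspondence isomorphism maps corners to corners unitarily once the vertex bijection $\beta$ has been read off from the atoms of $A$. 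Since a directed graph is determined up to isomorphism by its vertex set and this multiplicity function, the conclusion follows. The only cost of your method is that the edge bijection $\alpha$ is no longer canonical (you must choose bijections between equinumerous edge sets), which is harmless because only the existence of a graph isomorphism is asserted; what it buys is a proof that covers graphs with parallel edges, where the paper's own argument has a gap.
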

\begin{proof}
First note that $G\cong F$ is a particular case of having three bijections $\alpha :G^{1} \to F^{1}$ and $\beta, \gamma :G^{0}\to F^{0}$ such that for each edge $e_{i}\in G^{1}$, $s_{2}(\alpha(e_{i}))=\gamma(s_{1}(e_{i}))$ and $r_{2}(\alpha(e_{i}))=\beta(r_{1}(e_{i})$. More precisely, $G\cong F$ is the special case when $\beta=\gamma$.

If $G$ and $F$ are isomorphic graphs, then there are two bijections $\alpha :G^{1} \to F^{1}$ and $\beta :G^{0}\to F^{0}$ such that for each edge $e_{i}\in G^{1}$, $s_{2}(\alpha(e_{i}))=\beta(s_{1}(e_{i}))$ and $r_{2}(\alpha(e_{i}))=\beta(r_{1}(e_{i})$ then let $\varphi:E\to D$ be given by $\varphi (\delta_{e_{i}})=\delta_{\alpha(e_{i})}$ and $\omega:A\to B$ be given by $\omega(\delta_{v_{i}})=\delta_{\beta(v_{i})}$. Then
\begin{align*}
\varphi(\delta_{r_{1}(e_{i})}\cdot\delta_{e_{i}}\cdot\delta_{s_{1}(e_{i})})=\varphi(\delta_{e_{i}})=\delta_{\alpha (e_{i})}
&=\delta_{r_{2}(\alpha(e_{i}))}\cdot\delta_{\alpha (e_{i})}\cdot\delta_{s_{2}(\alpha(e_{i}))}\\
&=\delta_{\beta (r_{1}(e_{i}))}\cdot\delta_{\alpha (e_{i})}\cdot\delta_{\beta (s_{1}(e_{i}))}\\
&=\omega(\delta_{r_{1}(e_{i})})\cdot\varphi(\delta_{e_{i}})\cdot\omega(\delta_{s_{1}(e_{i})})
\end{align*}
and
\begin{align*}
\langle\varphi(\delta_{e_{i}}),\varphi(\delta_{e_{i}})\rangle_{A}&=\langle\delta_{\alpha(e_{i})},\delta_{\alpha(e_{i})}\rangle_{A}=\delta_{s_{2}(\alpha(e_{i}))}=\delta_{\beta(s_{1}((e_{i}))}=\omega(\delta_{s_{1}(e_{i})})=\omega(\langle\delta_{e_{i}},\delta_{e_{i}}\rangle_{A})
\end{align*}
So
\begin{align*}
\varphi(a\cdot x\cdot b) &= \varphi (\sum\limits_{k}a_{k}\delta _{v_{k}}\cdot \sum\limits_{i}z_{i}\delta _{e_{i}}\cdot \sum\limits_{j}b_{j}\delta _{v_{j}})\\
&=\varphi (\sum\limits_{k}\sum\limits_{i}\sum\limits_{j}a_{k}z_{i}b_{j}\delta _{v_{k}}\cdot \delta _{e_{i}}\cdot \delta _{v_{j}})\\
&= \varphi (\sum_{\substack{r_{1}(e_{i})=v_{k}\\ 
s_{1}(e_{i})=v_{j}}}a_{k}z_{i}b_{j}\delta _{e_{i}})\\
&= \sum_{\substack{r_{2}(\alpha( e_{i}))=\beta( v_{k})\\ 
s_{2}(\alpha( e_{i}))=\beta( v_{j})}}a_{k}z_{i}b_{j}\delta _{\alpha (e_{i})}\\
&=\sum\limits_{k}\sum\limits_{i}\sum\limits_{j}a_{k}z_{i}b_{j}\delta _{\beta( v_{k})}\cdot \delta _{\alpha( e_{i})}\cdot \delta _{\beta ( v_{j})}\\
&= (\sum\limits_{k}a_{k}\delta _{\beta( v_{k})})\cdot (\sum\limits_{i}z_{i}\delta _{\alpha( e_{i})})\cdot (\sum\limits_{j}b_{j}\delta _{\beta( v_{j})})\\
&=\omega( a)\cdot \varphi( x)\cdot \omega( b)
\end{align*}
and
\begin{align*}
\langle\varphi(x), \varphi(y)\rangle _{B} &= \langle\sum\limits_{i}z_{i}\delta _{\alpha( e_{i})}, \sum\limits_{j}y_{j}\delta _{\alpha( e_{j})}\rangle_{B}= \sum \limits_{i,j}\overline{z_{i}}\langle\delta _{\alpha( e_{i})}, \delta _{\alpha( e_{j})}\rangle_{B} y_{j} \\
&= \sum \limits_{i=j}\overline{z_{i}}\delta _{s_{2}(\alpha( e_{i}))} y_{j} = \sum \limits_{i=j}\overline{z_{i}}\delta _{\beta(s_{1}( e_{i}))} y_{j} = \sum \limits_{i=j}\omega(\overline{z_{i}}\delta _{s_{1}( e_{i})} y_{j}) \\
&= \omega( \sum \limits_{i=j}(\overline{z_{i}}\delta _{s_{1}( e_{i})} y_{j})= \omega( \sum \limits_{i, j}(\overline{z_{i}}\langle\delta _{ e_{i}}, \delta _{ e_{j}}\rangle_{A} y_{j}) = \omega( \sum \limits_{i, j}\langle z_{i}\delta _{ e_{i}}, y_{j} \delta _{ e_{j}}\rangle_{A} ) \\
&= \omega( \sum \limits_{i, j}\langle z_{i}\delta _{ e_{i}}, y_{j} \delta _{ e_{j}}\rangle_{A} ) = \omega( \langle \sum \limits_{i} z_{i}\delta _{ e_{i}}, \sum \limits_{j} y_{j} \delta _{ e_{j}}\rangle_{A} ) \\
&= \omega(\langle x, y\rangle _{A} )
\end{align*}
Since for each $\delta_{e_{i}}\in D$, $\varphi(\delta_{\alpha^{-1}(e_{i})})=\delta_{e_{i}}$, $\varphi$ is surjective. Thus $(\omega, \varphi):_{A}E_{A}\to _{B}D_{B}$ is a $W^{*}$-correspondence isomorphism.

Now we show the converse. If $_{A}E_{A} \cong {_{B}D_{B}}$, then there is a $W^{*}$-correspondence isomorphism  $(\omega, \varphi):_{A}E_{A}\to _{B}D_{B}$. Since $\omega$ and $\varphi$ are bijections, we have $|G^{1}|=|F^{1}|$ and $|G^{0}|=|F^{0}|$. Since relabeling vertices and edges gives an isomorphic graph, we may assume that $G^{0}=F^{0}$ and $G^{1}=F^{1}$. Since each $\delta _{v_{i}}$ is a projection, $\omega(\delta _{v_{i}})=\omega(\delta _{v_{i}}^{n})=\omega(\delta _{v_{i}})^{n}$ for all positive integers $n$. So $\omega(\delta _{v_{i}})$ is of the form $\sum\limits_{j}\delta _{v_{j}}$, and since $\omega$ is an isometry, we have $\omega(\delta _{v_{i}})=\delta _{v_{t}}$ for some vertex $v_{t}$.

Let $e_{i}\in G^{1}$,  $\omega(\delta_{s_{1}(e_{i})})= \delta_{v_{k}}$ and $\varphi(\delta_{e_{i}})=\sum z_{j}\delta _{e_{j}}$. Since $|\varphi(\delta_{e_{i}})|=1$ (being an isometry) and $\varphi(\delta_{e_{i}})=\varphi(\delta_{e_{i}}\cdot\delta_{s(e_{i})})=\varphi(\delta_{e_{i}})\cdot\omega(\delta_{s(e_{i})})=(\sum z_{j}\delta _{e_{j}})\cdot(\delta_{v_{k}})=z\delta_{s_{2}^{-1}(v_{k})}$, 
we must have, $\varphi(\delta_{e_{i}})=\delta_{s_{2}^{-1}(v_{k})}$. Thus $\omega$ and $ \varphi$ are given by permutations.
Let $\beta:G^{0}\to F^{0}$ be the permutation given by $\beta(v_{j})=v_{k}$ if $\omega(\delta_{v_{j}})=\delta_{v_{k}}$. Let $\alpha:G^{1}\to F^{1}$ be the permutation given by $\alpha(e_{i})=e_{m}$ if $\varphi(\delta_{e_{i}})=\delta_{e_{m}}$. In ${_{B}D_{B}}$, we have:

\noindent $\delta_{\beta(r_{1}(e_{i}))}\cdot\delta_{\alpha(e_{i})}\cdot\delta_{\beta(s_{1}(e_{i}))}=\omega(\delta_{r_{1}(e_{i})})\cdot\varphi(\delta_{e_{i}})\cdot\omega(\delta_{s_{1}(e_{i})})=\varphi(\delta_{r_{1}(e_{i})} \cdot\delta_{e_{i}}\cdot\delta_{s_{1}(e_{i})})=\varphi(\delta_{e_{i}})=\delta_{\alpha(e_{i})}$. So $\beta(s_{1}(e_{i}))=s_{2}(\alpha(e_{i}))$ and $\beta(r_{1}(e_{i}))=r_{2}(\alpha(e_{i}))$. $G$ anf $F$ are isomorphic graphs.
\end{proof}
\begin{corollary}\label{graphpermutationsmorita}
Let $_{A}E_{A}$ and $_{B}D_{B}$ be $W^{*}$-graph correspondences associated to the directed graphs $G=(G^{0}, G^{1}, s_{1}, r_{1})$ and $F=(F^{0}, F^{1}, s_{2}, r_{2})$ respectively. $_{A}E_{A} \stackrel{WME}{\sim}$ $_{B}D_{B}$ if and only if $G\cong F$.
\end{corollary}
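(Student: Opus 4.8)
The plan is to observe that this corollary is simply the composition of the two biconditionals established in the two theorems immediately preceding it, specialized to the case at hand. Since by hypothesis $_{A}E_{A}$ and $_{B}D_{B}$ are exactly the $W^{*}$-graph correspondences associated to $G$ and $F$, both of those theorems apply verbatim. First I would invoke \cref{moritaiso}, which asserts that for $W^{*}$-graph correspondences the weak Morita equivalence relation $\stackrel{WME}{\sim}$ coincides with $W^{*}$-correspondence isomorphism $\cong$; this gives
\[
{_{A}E_{A}}\stackrel{WME}{\sim}{_{B}D_{B}}\quad\Longleftrightarrow\quad {_{A}E_{A}}\cong{_{B}D_{B}}.
\]
Next I would apply \cref{moritagraph}, which identifies correspondence isomorphism with graph isomorphism, namely
\[
{_{A}E_{A}}\cong{_{B}D_{B}}\quad\Longleftrightarrow\quad G\cong F.
\]
Chaining these two equivalences yields ${_{A}E_{A}}\stackrel{WME}{\sim}{_{B}D_{B}}$ if and only if $G\cong F$, which is exactly the assertion.

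There is no real obstacle at this stage, because the substantive work has already been carried out in the preceding results. The nontrivial half is buried in \cref{moritaiso}: the fact that weak Morita equivalence collapses to isomorphism for graph correspondences rests on \cref{Asigma} (every $W^{*}$-equivalence bimodule over $A=C(X)$ has the form $_{A}A_{\sigma\,A}$ for a permutation $\sigma$) together with the triviality of the Picard-type group $P=\{[A]\}$, which forces any implementing bimodule to be of the permutation type and hence absorbable into a relabeling of the graph. Once those facts are in hand, the present corollary requires nothing beyond transitively combining the two stated equivalences; the only verification needed is that the hypotheses of both theorems — that the correspondences be $W^{*}$-graph correspondences arising from the given directed graphs — are met, and they hold by assumption. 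I would therefore simply record the two-step chain as the complete proof.
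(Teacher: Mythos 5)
Your proposal is correct and matches the paper's own proof exactly: the corollary is obtained by chaining \cref{moritaiso} with \cref{moritagraph}, which is precisely what the paper does. Nothing further is needed.
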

\begin{proof}
It follows from \cref{moritaiso} and \cref{moritagraph}.
\end{proof}

\bibliographystyle{amsalpha}
\bibliography{ReneMaster}

\end{document}